\documentclass[10pt, a4paper]{article}
\usepackage{indentfirst}
\usepackage{amsmath, amssymb, amsfonts, amsthm, graphicx}
\usepackage{geometry}
\usepackage{setspace}
\usepackage{algorithm}
\usepackage{algpseudocode}
\makeatletter
\usepackage{caption}
\usepackage{subfig}
\usepackage[section]{placeins}
\usepackage{float}
\usepackage{epstopdf}
\usepackage{cite}
\usepackage{xcolor}
\newtheorem{theorem}{Theorem}[section]

\newtheorem{lemma}[theorem]{Lemma}
\newtheorem{claim}{Claim}
\newtheorem{corollary}[theorem]{Corollary}

\newtheorem{remark}{Remark}

\newtheorem{conjecture}{Conjecture}[section]
\newtheorem{problem}[conjecture]{Problem}
\def\qed{\hfill \rule{4pt}{7pt}}

\begin{document}
\title{\Large\bf From Halin's Edge Removability to Matching Removability in $k$-Connected Graphs}

\author{Hengzhe Li$^{a}$, Mingming Zhou$^{a}$, Shinya Fujita$^{b}$\footnote{Corresponding author. Email: shinya.fujita.ph.d@gmail.com}, Yaping Mao$^{c}$\\
\small $^{a}$College of Mathematics and Statistics,\\
\small Henan Normal University, Xinxiang 453007, P.R. China\\
\small $^{b}$International College of Arts and Sciences, Yokohama City University\\
\small 22-2 Seto, Kanazawa-ku, Yokohama 236-0027, Japan\\
\small $^{c}$School of Mathematics and Statistics,\\
		\small Qinghai Normal University, Xining, Qinghai 810008, China.\\
\small Emails: lihengzhe@htu.edu.cn, zhoumming1218@163.com, shinya.fujita.ph.d@gmail.com, yapingmao@outlook.com}
\date{}
\maketitle
\begin{abstract}
We study matching-removability under the degree/connectivity regime of Halin's theorem, which asserts that every $k$-connected graph $G$ with minimum degree $\delta(G)\ge k+1$ contains an edge $e$ such that $G-e$ remains $k$-connected. For $k,\ell\ge 1$, an $\ell$-matching is a matching of size $\ell$. A matching $M$ in a $k$-connected graph $G$ is {\it $k$-removable} if $G-M$ remains $k$-connected. We improve Halin's result by proving that every $k$-connected graph $G$ with $\delta(G)\ge k+1$ contains a $k$-removable $2$-matching, except when $k=1$ and $G$ is a cycle. For small $k$ we obtain stronger bounds: (i) $k=1$: a 1-removable $\min\{\lfloor
n/2\rfloor,\delta(G)\}$-matching; (ii) $k=2$: a 2-removable $\lceil(\delta(G)+1)/2\rceil$-matching, with a unique tight exception when $\delta(G)$ is even and $G\cong K_{\delta(G)+1}$; and (iii) $k=3$: for $\delta(G)\ge 5$, a $3$-removable $\lceil(\delta(G)+1)/2\rceil$-matching.

We also show that for every $n\ge 2\delta$, there exists a $k$-connected $n$-vertex graph $G$ with minimum degree $\delta$ that does not contain a $k$-removable matching of size at least $\delta(G)+1$.
Moreover, for $k\le 2$ there exists a $k$-removable $(\delta(G)-c)$-matching for some $c\le 3$, which is optimal up to the additive constant.

{\flushleft\bf Keywords}: Connectivity, Matching, Removable matching, Minimum degree\\[2mm]
{\bf AMS subject classification 2020:} 05C40, 05C70
\end{abstract}

\section{Introduction}

Throughout, graphs are finite, undirected, and simple. Undefined terminology follows Bondy and Murty~\cite{Bondy2007}. For a graph $G$, we denote its vertex set, edge set, connectivity, and minimum degree by $V(G)$, $E(G)$, $\kappa(G)$, and $\delta(G)$, respectively; let $n=|V(G)|$.

For a vertex subset $S \subseteq V(G)$,
we denote
the {\it induced subgraph} of $G$ by $G[S]$.  For an edge subset $F \subseteq E(G)$, the {\it edge-induced subgraph} of $G$ is denoted by $G[F]$, where it is the subgraph of $G$ with the vertex set consisting of all vertices incident to edges in $F$ together with the edge set $F$.

For a vertex subset $S \subseteq V(G)$, let $G - S := G[V(G) \setminus S]$. For an edge subset $E' \subseteq E(G)$, $G - E'$ denotes the graph with vertex set $V(G)$ and edge set $E(G) \setminus E'$. We write $G - v$ and $G - e$ as abbreviations for $G - \{v\}$ and $G - \{e\}$, respectively.

Let $G$  be a connected graph.
 A {\it cut vertex} of $G$ is a vertex $v$ such that $G-v$ is a disconnected graph. A {\it bridge} (or cut edge) is an edge $e$ such that $G-e$ is a disconnected graph. For an integer $k\ge 1$, a vertex subset $S\subseteq V(G)$ on $k$ vertices is a {\it $k$-separating set} if $G-S$ is disconnected. A graph $G$ is {\it $k$-connected} if it has no $(k-1)$-separating set; it is {\it $k$-edge-connected} if $G-F$ is connected for every $F\subseteq E(G)$ with $|F|\le k-1$.

For $k \ge 1$, a vertex set $S$ in a $k$-connected graph $G$ is called {\it $k$-removable} if $G - S$ remains $k$-connected. Every vertex in a $k$-removable vertex set is a {\it $k$-removable vertex}. An edge set $F$ in a $k$-connected graph $G$ is called {\it $k$-removable} if $G - F$ remains $k$-connected. Every edge in a $k$-removable edge set is a {\it $k$-removable edge}. If $k$-removable edge set $F$ is a matching, then $F$ is a {\it $k$-removable matching}.

Halin~\cite{Halin1969} initiated the study of edge-removability under the condition $\delta(G)\ge k+1$.

\begin{theorem}[Halin~\cite{Halin1969}]\label{thm:halin}
Every $k$-connected graph $G$ with $\delta(G)\ge k+1$ contains an edge $e$ such that $G-e$ remains $k$-connected.
\end{theorem}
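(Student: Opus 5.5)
We argue by contradiction, via minimally $k$-connected graphs. Suppose $G$ is $k$-connected with $\delta(G)\ge k+1$, but $G-e$ fails to be $k$-connected for every edge $e$. Then $G$ is \emph{minimally $k$-connected}. The goal is to show that every minimally $k$-connected graph has a vertex of degree exactly $k$, which contradicts $\delta(G)\ge k+1$. This is Halin's degree theorem, and I would prove it by the standard fragment/end argument (Mader's approach).

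\textbf{Step 1 (local structure).} Fix an edge $e=uv$. Since $G-e$ is not $k$-connected but $G$ is, $G-e$ has a separating set $S$ with $|S|=k-1$. Here $u$ and $v$ lie in different components of $G-e-S$. Set $T_e=S\cup\{u\}$. Then $T_e$ is a $k$-separating set of $G$ with $u\in T_e$, and $v$ lies in a component of $G-T_e$ (one can first check that $v$ is not isolated in $G-S-e$ in the relevant cases). Thus every edge $uv$ yields a minimum separator $T$ containing $u$ such that $v\in A$ for some component $A$ of $G-T$. Call $A$ a \emph{fragment}, and call it an \emph{$e$-fragment} when it arises this way.

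\textbf{Step 2 (extremal choice).} Among all pairs $(e,A)$ with $A$ an $e$-fragment, choose one with $|A|$ minimal, say $e=uv$, $v\in A$, with separator $T\ni u$. Let $\bar A=V(G)\setminus(A\cup T)$, which is nonempty.

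Suppose first that $|A|=1$. Then $N(v)\subseteq T$, so $\deg(v)\le k$, a contradiction.

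Suppose instead that $|A|\ge2$. Then $v$ has a neighbour $w\in A$, or $v$ has another edge. Apply Step 1 to the edge $vw$ with $w\in A$, obtaining a separator $T'\ni v$ and a $vw$-fragment $A'\ni w$. Now use the usual crossing inequality for minimum separators,
\[
|T\cap A'|+|T\cap T'|+|A\cap T'| \;\ge\; \dots,
\]
in the form: if $A\cap A'\neq\emptyset$ and $\bar A\cap\bar A'\neq\emptyset$, then $|N(A\cap A')|\ge k$ and $|N(\bar A\cap \bar A')|\ge k$. Combined with $|T|+|T'|=2k$, this forces $N(A\cap A')$ to be a minimum separator, with $A\cap A'$ a fragment strictly smaller than $A$.

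\textbf{Step 3 (main obstacle).} The hardest part is showing that the smaller fragment $A\cap A'$ (or $A\cap\bar A'$) is again an $f$-fragment for some edge $f$. Concretely, we need its separator to contain an endpoint of an edge whose other endpoint lies inside it, and we must do the case analysis on where $w$ and $v$ sit relative to $T'$. My first attempt would be to take $f=vw$ itself. Since $v\in T\cap T'$ or $v\in A\cap T'$, and $w\in A\cap A'$, the separator $N(A\cap A')$ contains $v$. This gives a smaller $vw$-fragment, contradicting minimality.

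The nonemptiness of the opposite corner $\bar A\cap\bar A'$ must be checked by counting: if it were empty, $\bar A\subseteq T'$. In that case I would instead use the corner $A\cap \bar A'$ via a symmetric counting argument. Once the descent is justified, minimality forces $|A|=1$, and the degree-$k$ vertex gives the contradiction.
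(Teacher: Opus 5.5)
\textbf{There is a genuine gap, at the degenerate corner $\bar A\cap\bar A'=\emptyset$ that you flag yourself.}

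The paper does not prove this statement. It quotes it from Halin, and it is a one-line consequence of the cited Theorem~\ref{Minimumdegree: Halin} (minimally $k$-connected graphs have $\delta(G)=k$), exactly via your opening reduction. So everything rests on your self-contained fragment argument.

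Your claim that $\bar A\cap\bar A'=\emptyset$ forces $\bar A\subseteq T'$ is false; it only gives $\bar A\subseteq A'\cup T'$. The fallback does not work as described either:
\begin{itemize}
\item[(a)] The corner $A\cap\bar A'$ does not contain $w$, and $v$ may have many neighbours in $\bar A'=C'_v\setminus\{v\}$. So at best you get a smaller fragment, not a smaller \emph{edge}-fragment, and your minimality is only over edge-fragments.
\item[(b)] Swapping the roles of $v,w$ (separator $S'\cup\{w\}$, fragment $C'_v$) leaves the opposite corner empty in both orientations when $\bar A\subseteq S'$, and nothing rules that out.
\item[(c)] The standard atom argument handles this case with $|\bar A'|\ge|A|$. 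That bound is unavailable to you, because $\bar A'$ need not contain an edge-fragment.
\end{itemize}

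\textbf{The missing idea.} Since $\bar A\ne\emptyset$ is disjoint from $A\cup T\supseteq (A\cap A')\cup N(A\cap A')$, the set $N(A\cap A')$ always separates $w$ from $\bar A$. Hence $|N(A\cap A')|\ge k$, and it suffices to show
\[
|N(A\cap A')|\le |A\cap T'|+|T\cap T'|+|T\cap A'| = k+|A\cap T'|-|T\cap\bar A'|\le k,
\]
that is, $|T\cap\bar A'|\ge|A\cap T'|$.
\begin{itemize}
\item If $\bar A\cap\bar A'\neq\emptyset$, this is your crossing inequality.
\item If $\bar A\cap\bar A'=\emptyset$, note that $C'_v=\bar A'\cup\{v\}$ is itself an edge-fragment, for the edge $wv$ with separator $S'\cup\{w\}$ (which separates since $\deg w\ge k+1$). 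Minimality then gives $|\bar A'|\ge |A|-1$. Since $\bar A'\subseteq A\cup T$, this reads $|A\cap\bar A'|+|T\cap\bar A'|\ge|A\cap A'|+|A\cap T'|+|A\cap\bar A'|-1$. As $|A\cap A'|\ge 1$ (it contains $w$), the required inequality follows.
\end{itemize}

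\textbf{Smaller points to fix.}
\begin{itemize}
\item $A\cap A'$ may be disconnected, so take the component $D$ of $w$ in $G-N(A\cap A')$.
\item $D$ is a $vw$-fragment because $N(v)\cap A'=\{w\}$ by construction of $A'$. This fact, not the location of $v$, is what your Step~3 actually needs.
\item In Step~1, $S\cup\{u\}$ separates because the $u$-side of $G-e-S$ contains a vertex other than $u$ (from $\deg u\ge k+1$), not because $v$ is non-isolated.
\end{itemize}
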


The minimum degree condition in Theorem~\ref{thm:halin} is best possible, since a $k$-connected $k$-regular graph contains no $k$-removable edge. From this point of view, in this paper, we will mainly discuss the edge-removability of a $k$-connected graph $G$ such that $\delta(G)\geq k+1$.

On the other hand, vertex-removability in $k$-connected graphs with higher minimum degree was subsequently investigated by Chartrand, Kaugars, and Lick~\cite{Chartrand1972}.

\begin{theorem}[Chartrand--Kaugars--Lick~\cite{Chartrand1972}]\label{thm:ckl}
Every $k$-connected graph $G$ with $\delta(G)\ge \left\lfloor 3k/2\right\rfloor$ has a vertex $x$ such that $G-x$ is still $k$-connected.
\end{theorem}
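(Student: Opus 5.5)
The plan is to argue by contradiction using minimal fragments (atoms), with the degree bound entering only at the end. Suppose $G$ is $k$-connected, $\delta(G)\ge\lfloor 3k/2\rfloor$, and $G-x$ fails to be $k$-connected for every vertex $x$. For a fixed $x$, either $G-x$ has a $(k-1)$-separating set $S'$, in which case $S=S'\cup\{x\}$ is a $k$-separating set of $G$ containing $x$; or $G-x$ is complete on at most $k$ vertices, which contradicts $\delta(G)\ge\lfloor 3k/2\rfloor\ge k$ for $k\ge 2$ (the case $k=1$ is the classical existence of a non-cut vertex). So every vertex lies in some $k$-separating set.

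\textbf{Setting up the atom.} Call a \emph{fragment} the union of some, but not all, components of $G-S$ for a $k$-separating set $S$, and let $\bar F=V(G)-S-F$ be its complement. Choose a fragment $F$ of minimum size (an atom), with separator $S$. A vertex $v\in F$ has all its neighbours in $F\cup S$, so
\[
\delta(G)\le |F|-1+k, \qquad\text{hence}\qquad |F|\ge \lfloor k/2\rfloor+1 .
\]
The same bound holds for every fragment.

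\textbf{Crossing fragments.} Take $x\in F$ and a $k$-separating set $T$ containing $x$, with fragment $A$ and complement $\bar A$. I will use the standard crossing inequality: if $F\cap A\neq\emptyset$ and $\bar F\cap\bar A\neq\emptyset$, then
\[
N(F\cap A)\subseteq (F\cap T)\cup(S\cap T)\cup(S\cap A),\qquad N(\bar F\cap \bar A)\subseteq (\bar F\cap T)\cup(S\cap T)\cup(S\cap \bar A).
\]
The two right-hand sides have total size $|S|+|T|=2k$, and each is at least $k$, so both equal $k$. Then $F\cap A$ is a fragment strictly smaller than $F$, because $x\in F\cap T$ is not in it. This contradicts minimality.

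By symmetry the same holds for the pair $(F\cap\bar A,\ \bar F\cap A)$. It follows that if $F$ meets both $A$ and $\bar A$, then $\bar F\subseteq T$. Otherwise $F$ misses $A$ or $\bar A$. Further case work, again using minimality (for instance, $F\cap A$ nonempty with $F\cap \bar A$ empty forces $\bar F\cap\bar A=\emptyset$), should reduce everything to configurations in which some fragment, namely $F$, $A$ or $\bar A$, is swallowed by the separator of the other cut. In particular, one case has $F\subseteq T$.

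\textbf{Main obstacle: the counting.} I expect the hardest step to be the final count that excludes these configurations. There are two sources of bounds:
\begin{itemize}
\item Each fragment has at least $\lfloor k/2\rfloor+1$ vertices, which is where $\delta(G)\ge\lfloor3k/2\rfloor$ is used.
\item $|S|=|T|=k$.
\end{itemize}
For example, if $F\subseteq T$, then $|T\cap F|\ge\lfloor k/2\rfloor+1$ and so $|T\cap(S\cup\bar F)|\le\lceil k/2\rceil-1$. Since $A$ and $\bar A$ are nonempty, one of them lies in $S\cup\bar F$ away from $T$. I would then bound its neighbourhood by the $S$-part, the $T$-part and the $\bar F$-part, and show that one of $A\cap\bar F$, $\bar A\cap\bar F$ or $A\cap S$ is a fragment with separator of size at most $k$ but fewer than $\lfloor k/2\rfloor+1$ vertices, or with a separator of size less than $k$. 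Either outcome is a contradiction. I would first try to carry out this count in the cleanest case, $F\subseteq T$ with $A\cap S=\emptyset$, and then handle the remaining cases symmetrically. The parity in $\lfloor 3k/2\rfloor$ must be tracked carefully there.
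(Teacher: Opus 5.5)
The paper gives no proof of this statement. It quotes it from Chartrand--Kaugars--Lick and uses it as a black box, so your proposal has to stand on its own. Your framework is the right one: an atom $F$ with separator $S$, a $k$-separator $T$ through some $x\in F$, and the crossing inequality. Everything up to ``if $F$ meets both $A$ and $\bar A$ then $\bar F\subseteq T$'' is correct. There is one small slip: to rule out $|V(G-x)|\le k$ you need $\lfloor 3k/2\rfloor\ge k+1$, which holds for $k\ge 2$; the bound $\ge k$ you wrote is not enough.

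The genuine gap is that you never derive a contradiction in any of the terminal configurations. These are the heart of the proof, and you describe them only as ``should reduce'' and ``I would then bound''. Your claim that the other cases follow ``symmetrically'' from the $F\subseteq T$ count is also not accurate. Those cases need different counts, most naturally using minimality of $F$ against the swallowed fragment ($|\bar F|\ge|F|$, $|\bar A|\ge|F|$). Your list of sources of bounds omits this; it has only the absolute bound $\lfloor k/2\rfloor+1$.

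Concretely, three cases must be closed.

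(a) $F\cap A\neq\emptyset=F\cap\bar A$. Crossing gives $\bar A\subseteq S$. Since $|N(F\cap A)|\ge k$ and $N(F\cap A)\subseteq (F\cap T)\cup(S\cap T)\cup(S\cap A)$, you get $|F\cap T|\ge |S\cap\bar A|=|\bar A|\ge |F|$. This is impossible because $F\cap A\neq\emptyset$. The case with $A,\bar A$ swapped is identical.

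(b) $F$ meets both $A$ and $\bar A$, so $\bar F\subseteq T$. Adding $|N(F\cap A)|\ge k$ and $|N(F\cap\bar A)|\ge k$ gives $2|F\cap T|+|S\cap T|\ge k$. On the other hand $|F|\le|\bar F|=k-|F\cap T|-|S\cap T|$. Together these force $|F\cap A|+|F\cap\bar A|\le 0$, a contradiction.

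(c) $F\subseteq T$. First show $|S\cap A|\ge|F|$:
\begin{itemize}
\item if $\bar F\cap A=\emptyset$, then $A\subseteq S$;
\item otherwise combine $N(\bar F\cap A)\subseteq(\bar F\cap T)\cup(S\cap T)\cup(S\cap A)$ with $|T|=|F|+|S\cap T|+|\bar F\cap T|$.
\end{itemize}
Likewise $|S\cap\bar A|\ge|F|$. Hence $2|F|\le |S|=k$, contradicting $|F|\ge\lfloor k/2\rfloor+1>k/2$.

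Cases (a) and (b) are purely structural: they show that an atom meeting a minimum separator lies inside it. The degree hypothesis enters only in (c), and no parity bookkeeping is needed there, contrary to your expectation.
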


Fujita and Kawarabayashi~\cite{Fujita2008} strengthened this direction by proving that every $k$-connected graph $G$ with $\delta(G)\ge \left\lfloor 3k/2\right\rfloor+2$ has an edge $xy$ such that $G-x-y$ remains $k$-connected, and they proposed a broader conjecture on removable connected subgraphs: for each fixed $m$, there exists a function $f(m)$ such that if $G$ is $k$-connected with $\delta(G)\ge 3k/2+f(m)$, then $G$ contains a connected subgraph $W$ of order $m$ such that $G-W$ still $k$-connected. Mader~\cite{Mader2010} confirmed the existence of such a function via the following path-removability theorem.

\begin{theorem}[Mader~\cite{Mader2010}]\label{thm:mader-path}
Let $k,m$ be positive integers. Every $k$-connected graph $G$ with $\delta(G)\ge \left\lfloor 3k/2\right\rfloor + m - 1$ contains a path $P$ of order $m$ such that $G-V(P)$ remains $k$-connected.
\end{theorem}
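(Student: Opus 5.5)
The plan is to prove the statement by induction on $m$, taking Theorem~\ref{thm:ckl} as the base case. For $m=1$ the hypothesis reads $\delta(G)\ge\lfloor 3k/2\rfloor$, and a path of order $1$ is a single vertex, so Theorem~\ref{thm:ckl} gives the claim directly. For $m\ge 2$ the degree bound is at least the bound for $m-1$, so the induction hypothesis yields a path $Q$ of order $m-1$ with $H:=G-V(Q)$ $k$-connected. Let $v$ be an end of $Q$. The aim is to extend $Q$ by a neighbour $y\in V(H)$ of $v$ for which $H-y$ is still $k$-connected, so that $P=Qy$ works.

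To have room to manoeuvre, I would not fix $Q$. Among all pairs $(Q,v)$ with $Q$ of order $m-1$, $G-V(Q)$ $k$-connected, and $v$ an end of $Q$, I would pick one minimising an extremal parameter. Call a vertex $y\in N_H(v)$ \emph{bad} if $H-y$ is not $k$-connected. Then $y$ lies in a $k$-separating set $S$ of $H$, and $H-S$ has a component $F$, a \emph{fragment}. The parameter to minimise is the smallest order of a fragment $F$ of $H$ whose separator contains a neighbour of $v$.

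First I would check that $v$ has neighbours in $H$ at all. Since $\deg_G(v)\ge\lfloor 3k/2\rfloor+m-1$ and $v$ has at most $m-2$ neighbours on $Q$, we get $|N_H(v)|\ge\lfloor 3k/2\rfloor+1$. Assume for contradiction that every vertex of $N_H(v)$ is bad. Then I would run the standard fragment calculus used to prove Theorem~\ref{thm:ckl}. Let $F$ be a smallest fragment of $H$ and $S$ its separator, with $|S|=k$. A vertex $x\in V(F)$ has degree at least $\lfloor 3k/2\rfloor+m-1$ in $G$. At most $m-1$ of its neighbours lie on $Q$, so $x$ has at least $\lfloor 3k/2\rfloor$ neighbours in $F\cup S$, which gives $|F|\ge\lfloor k/2\rfloor+1$. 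Next, a bad vertex $y\in N_H(v)\cap F$, or a suitable vertex of $F$, produces a second separator $T$ with fragment $F'$. The usual submodular inequality for intersecting separators, $|S\cap T|+|S\cap F'|+|T\cap F|\ge\ldots$, combined with the minimality of $F$, should force $F\cap F'$ or a related corner to be a strictly smaller fragment. That is a contradiction. If instead the corner is empty, counting shows $|S|>k$, another contradiction.

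The extra vertex $v$ enters the computation by contributing to the degrees of vertices of $F$. This is where the slack of $m-1$ beyond $\lfloor 3k/2\rfloor$ is spent. When the counting fails, the alternative is to re-route: replace $v$ by a vertex of $F$ adjacent to $v$'s predecessor on $Q$, which gives a new admissible pair $(Q',v')$ with a smaller fragment, contradicting the choice.

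I expect the main obstacle to be this last step. The induction hypothesis controls only the existence of some $Q$, not its interaction with $H$'s fragments, and vertices of $Q$ other than $v$ can absorb up to $m-2$ neighbours of fragment vertices, so the pure CKL counting loses exactly that slack. I would first try to strengthen the inductive statement to require, additionally, that the end $v$ of the path has a neighbour in every end-fragment of $G-V(P)$. This makes the extension step a direct application of the fragment argument, and it propagates in the induction because the newly added vertex $y$ is chosen inside a minimum fragment.
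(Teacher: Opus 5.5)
The paper does not prove this statement; it quotes it from Mader. So I am judging your argument on its own. There is a genuine gap at exactly the step that carries the theorem.

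\textbf{The contradiction step has nothing to work with.} With $H=G-V(Q)$ you correctly get $\delta(H)\ge\lfloor 3k/2\rfloor$. Hence every atom $F$ of $H$ (a $k$-fragment of minimum order) has $|F|\ge\lfloor k/2\rfloor+1>k/2$. The standard atom lemma behind Theorem~\ref{thm:ckl} says that an atom meeting a $k$-separator has order at most $k/2$. So no vertex of $F$ lies in any $k$-separator: every vertex of an atom is already good.

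Consequently, assuming that all of $N_H(v)$ is bad yields no contradiction from the fragment calculus. It only forces $N_H(v)\cap F=\emptyset$ for every atom $F$, and your ``bad vertex $y\in N_H(v)\cap F$, or a suitable vertex of $F$'' does not exist. The whole difficulty is to choose $Q$ so that its end sees an atom of $H$.

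The re-routing step does not achieve this. It presupposes that the predecessor of $v$ has a neighbour in $F$, which is the same obstacle moved one step back along $Q$. You also give no argument that the new pair has a smaller parameter. Unless $F$ is an atom, there is also no reason that $G-V(Q')=(H-x)+v$ is still $k$-connected.

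\textbf{The proposed strengthening is false for ends.} If ``end-fragment'' means an inclusion-minimal fragment, here is a counterexample. Take $k=m=2$ and $t\ge 2$ disjoint copies $B_i$ of $K_5$ on $\{a_i,b_i,c_i,d_i,e_i\}$, joined by the edges $d_ia_{i+1}$ (indices mod $t$). Then $G$ is $2$-connected with $\delta(G)=4=\lfloor 3k/2\rfloor+m-1$.

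\begin{itemize}
\item Any path of order $2$ meeting some $a_i$ or $d_i$ leaves $G-V(P)$ disconnected or with a cut vertex.
\item So every admissible $P$ lies inside a single set $\{b_i,c_i,e_i\}$.
\item Then $\{b_j,c_j,e_j\}$ with $j\ne i$ is an end of $G-V(P)$, with separator $\{a_j,d_j\}$, and it contains no neighbour of $P$.
\end{itemize}

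\textbf{The version with atoms does work.} Use the invariant \emph{the end of $P$ has a neighbour in every atom of $G-V(P)$}. It propagates as follows.

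\begin{itemize}
\item Suppose $y$ lies in an atom $A$ of $H$ with $|A|\ge 2$ and $|A|>k/2$. Then $H-y$ is $k$-connected, and $A-y$ is a $k$-fragment of $H-y$. So every atom $A'$ of $H-y$ has $|A'|<|A|$.
\item If $y$ had no neighbour in such an $A'$, then $N_H(A')=N_{H-y}(A')$, and $A'$ would be a smaller $k$-fragment of $H$, contradicting the choice of $A$.
\item If $H$ has no $k$-separator, then it is complete or $(k+1)$-connected. In that case $y$ meets every atom $A'$ of $H-y$ because $|N_H(A')|\ge k+1$.
\end{itemize}

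Start with $v_1$ in an atom of $G$, or any vertex if $G$ has no $k$-separator, and extend greedily into atoms. The bound $\delta(G-V(P_j))\ge\lfloor 3k/2\rfloor+m-1-j$ keeps atoms of order $>k/2$, and of order $\ge 2$ while another step is needed. This proves the theorem with no extremal choice and no re-routing.
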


Furthermore, Mader pointed out the possibility of extending Theorem~\ref{thm:mader-path} from paths to arbitrary trees.

\begin{conjecture}[Mader~\cite{Mader2010}]\label{conj:mader-tree}
For any $k\ge 1$ and any tree $T$ of order $m$, every $k$-connected graph $G$ with $\delta(G)\ge \left\lfloor 3k/2\right\rfloor + m - 1$ contains a subtree $T'\cong T$ such that $G-V(T')$ is $k$-connected.
\end{conjecture}

Mader proved that Conjecture~\ref{conj:mader-tree} is true when $T$ is a path~\cite{Mader2010}. Diwan and Tholiya~\cite{Diwan2009} settled the case $k=1$ (motivated by Locke's conjecture). For particular families of trees or for $2$-connected graphs, the conjecture was established by Hasunuma and Ono~\cite{Hasunuma2020}, Hasunuma~\cite{Hasunuma2021}, Lv and Zhang~\cite{Lv2020}, Tian, Meng, Lai, and Xu~\cite{Tian2018}, Tian, Lai, Xu, and Meng~\cite{Tian2019}. Hong and Liu~\cite{Hong2022} verified the conjecture for $k\le 3$; the range $k\ge 4$ remains open. For further advances regarding Mader's Conjecture, readers may consult the recent survey by Tian and Meng~\cite{Tian2026}. In addition, Fujita~\cite{Fujita2025} explored $k$-removable paths in $k$-connected triangle-free graphs.

Hasunuma proposed an edge-removable analogue in which one deletes the edges of an embedded tree while keeping high connectivity in  $k$-(edge)-connected graphs.

\begin{conjecture}[Hasunuma~\cite{Hasunuma2023}]\label{conj:hasunuma-edge}
For $k\ge 1$ and any tree $T$ of order $m$, every $k$-connected (or $k$-edge-connected) graph $G$ with $\delta(G)\ge k+m-1$ contains a subtree $T'\cong T$ such that $G-E(T')$ remains $k$-connected (or $k$-edge-connected, respectively).
\end{conjecture}

Hasunuma verified Conjecture~\ref{conj:hasunuma-edge} for $k\le 2$, and  Liu, Liu, and Hong~\cite{Liu2023} extended this to $k=3$. For more advances on Hasunuma's Conjecture, we refer readers to the survey by Tian and Meng~\cite{Tian2026}.
For $k\ge 4$, the conjecture is still open. Moreover, for $k\le 2$, Hasunuma obtained the following refinement.

\begin{theorem}[Hasunuma~\cite{Hasunuma2023}]\label{thm:hasunuma-kle2}
Let $k\le 2$ and $T$ be a tree of order $m$. Every $k$-connected (or $k$-edge-connected) graph $G$ with $\delta(G)\ge \max\{\Delta(T)+k,\, m-1\}$ contains a subtree $T'\cong T$ such that $G-E(T')$ remains $k$-connected (or $k$-edge-connected).
\end{theorem}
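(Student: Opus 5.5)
The proof goes by induction on $m=|V(T)|$, growing the embedded tree one leaf at a time. The case $m\le 2$ is Theorem~\ref{thm:halin} together with its edge-connected analogue, since $\delta(G)\ge \Delta(T)+k\ge k+1$.

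For $m\ge 3$, fix a leaf $v$ of $T$ with neighbour $u$, and set $T_0=T-v$. The hypothesis $\delta(G)\ge\max\{\Delta(T_0)+k,\,m-2\}$ still holds, so by induction $G$ contains $T_0'\cong T_0$ with $H:=G-E(T_0')$ $k$-connected (resp.\ $k$-edge-connected). Let $u'$ be the image of $u$. It remains to find a neighbour $w\notin V(T_0')$ of $u'$ in $G$ such that $H-u'w$ is still $k$-connected; then $T'=T_0'+u'w$ is the required copy of $T$.

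Two counts drive the argument.
\begin{itemize}
\item \emph{Degrees in $H$.} Every vertex $x$ satisfies $d_H(x)\ge \delta(G)-\deg_{T_0'}(x)\ge \Delta(T)+k-\Delta(T)=k$. For $u'$ we have $\deg_{T_0'}(u')\le \Delta(T)-1$, so $d_H(u')\ge k+1$.
\item \emph{Neighbours outside the tree.} Since $d_G(u')\ge m-1$ and $|V(T_0')\setminus\{u'\}|=m-2$, the vertex $u'$ has at least one $G$-neighbour outside $V(T_0')$. All such edges lie in $H$.
\end{itemize}

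The difficulty is that the outside neighbour guaranteed by the second count may be joined to $u'$ by a $k$-critical edge of $H$. To gain flexibility, I would strengthen the induction statement and choose $T_0'$ extremal among all valid embeddings. For instance, pick $T_0'$ so that the number of $k$-critical edges of $H$ at $u'$ is minimal, or so that $u'$ is embedded at a vertex of maximum degree.

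The key structural input is the following fact for $k\le 2$.
\begin{itemize}
\item For $k=1$, an edge is removable if and only if it lies on a cycle, so the critical edges are exactly the bridges.
\item For $k=2$, Mader's theorem says that every cycle of $k$-critical edges contains a vertex of degree $k$. In a minimally $2$-connected graph, the vertices of degree $\ge 3$ form an independent set.
\end{itemize}
Consequently, if every edge from $u'$ to $V(G)\setminus V(T_0')$ were critical, then, because $d_H(u')\ge k+1$, we could reroute. That is, we would swap a tree edge at $u'$ (or re-embed a branch of $T_0'$) with a non-tree edge at $u'$ so as to obtain another valid embedding. This would either contradict the extremal choice or directly produce a removable edge to a new vertex. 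For $k=1$ this is clean: if $u'w$ is a bridge of $H$, the component of $H-u'w$ containing $w$ has all degrees $\ge 1$ and sends no tree edge back, which one uses to locate a cycle through $u'$ leaving $T_0'$. For $k=2$ one analyses the block/ear structure of $H$ around $u'$ in the same way.

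The main obstacle is this last step for $k=2$: showing that the $2$-critical edges at $u'$ cannot cover all of its $G$-neighbours outside the tree. Here the bound $m-1$ on $\delta$ (rather than $\Delta(T)+k$) is only barely sufficient. I would first try to handle it by counting degree-$2$ vertices of $H$, which must be tree vertices of full $T$-degree, against the independence property of high-degree vertices in minimally $2$-connected subgraphs.

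The $k$-edge-connected version runs identically. There, $k$-critical edges are those in $k$-edge-cuts, and for $k\le 2$ the analogous facts hold: bridges for $k=1$, and the cactus structure of $2$-edge-cuts for $k=2$.
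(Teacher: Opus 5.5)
The paper gives no proof of this statement (it is quoted from Hasunuma and used only for Corollary~\ref{cor:hasunuma-matching}), so your proposal has to stand on its own. It does not, because the inductive step, which is the entire content of the theorem, is never carried out. With the hypothesis as you state it (some $T_0'$ with $G-E(T_0')$ $k$-connected), the extension can genuinely fail. Take $k=1$ and $T=P_3$, so $\delta\ge 3$ is required. Let $G$ be two disjoint copies of $K_4$, one containing $w_1,y$ and the other containing $w_2$, plus a vertex $u'$ adjacent to $w_1,y,w_2$. Then $T_0'=u'y$ is a valid embedding, but in $H=G-u'y$ both edges $u'w_1,u'w_2$ are bridges, so no leaf can be attached at $u'$.

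The same example defeats your $k=1$ sketch. The component of $H-u'w_1$ containing $w_1$ contains the tree vertex $y$, so a tree edge does come back to $u'$. Moreover, $H$ has no cycle through $u'$ at all. Everything therefore rests on the extremal choice and the rerouting, and there you only assert ``we could reroute''. You never fix the extremal parameter. You also never check that swapping a tree edge at $u'$, or re-embedding a branch, yields a tree isomorphic to $T_0$ (or $T$) whose complement is still $k$-connected. Each such exchange deletes a new edge from $H$, which is exactly the operation that can destroy $k$-connectivity.

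For $k=2$ the plan is built on a false fact. In a minimally $2$-connected graph the vertices of degree at least $3$ need not be independent; Theorem~\ref{Forest: Mader} only makes them induce a forest. For instance, take the $4$-cycles $apcq$ and $brcs$, which share $c$, and add the edge $ab$. This graph is $2$-connected and every edge is critical: $ab$ is critical because $c$ separates $\{a,p,q\}$ from $\{b,r,s\}$ once $ab$ is deleted. Yet $a$ and $b$ are adjacent vertices of degree $3$. Beyond that, you explicitly leave the $k=2$ step as something you ``would try'', and you declare the edge-connected version identical without argument.

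What you have is a reduction of the theorem to its own hard part. A proof needs a precisely formulated strengthened hypothesis or extremal embedding, together with a verified exchange step that preserves both the shape of the tree and the $k$-(edge-)connectivity of the complement.
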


The above results concern the removability of connected subgraphs. In this paper, we relax the connectivity requirement on the deleted subgraph and ask: can one remove a {\it matching} while preserving $k$-connectivity?

By Theorem~\ref{thm:hasunuma-kle2}, we obtain the following immediate corollary, which strengthens Halin's theorem for $k\le 2$ when $\delta(G)\ge k+2$.

For $\ell\ge 1$, an $\ell$-matching is a matching of size $\ell$.

\begin{corollary}\label{cor:hasunuma-matching}
For $k\le 2$, every connected graph $G$ with $\delta(G)\ge k+2$ contains a $k$-removable $\lfloor (\delta(G)+1)/2\rfloor$-matching.
\end{corollary}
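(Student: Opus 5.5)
The plan is to apply Theorem~\ref{thm:hasunuma-kle2} to a tree $T$ that contains a large matching while keeping both $\Delta(T)$ and the order $m$ small. The natural candidate is a path. Put $\ell=\lfloor(\delta(G)+1)/2\rfloor$ and let $T=P_{2\ell}$ be the path on $m=2\ell$ vertices. Then $\Delta(T)\le 2$, and $T$ contains a perfect matching consisting of its first, third, fifth, $\dots$ edges, which is exactly $\ell$ edges.

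Next we check the hypotheses of Theorem~\ref{thm:hasunuma-kle2}. We need $\delta(G)\ge\max\{\Delta(T)+k,\ m-1\}$. For the first term, $\Delta(T)+k\le k+2\le\delta(G)$ by assumption; if $\ell=1$ then $\Delta(T)=1$ and the bound is easier still. For the second term, $m-1=2\lfloor(\delta(G)+1)/2\rfloor-1\le\delta(G)$, since $2\lfloor(\delta+1)/2\rfloor\le\delta+1$. The statement says ``connected graph'', whereas the theorem needs $G$ to be $k$-connected. We read it as $k$-connected, as in the setting of $k$-removability; for $k=1$ the two notions coincide. The theorem then produces a subtree $T'\cong P_{2\ell}$ such that $G-E(T')$ is $k$-connected.

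Let $M\subseteq E(T')$ be the alternate-edge perfect matching of $T'$, so $|M|=\ell$. The last step is monotonicity: $G-M$ is a spanning supergraph of $G-E(T')$, and adding edges cannot destroy $k$-connectivity. Indeed, any vertex set that separates $G-M$ also separates $G-E(T')$, and both graphs have the same order. Hence $G-M$ is $k$-connected, so $M$ is a $k$-removable $\ell$-matching.

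There is no real obstacle. The only points needing care are the arithmetic in the hypothesis $m-1\le\delta(G)$, which is why the path has $2\ell$ vertices rather than more, and the monotonicity of connectivity under adding edges.
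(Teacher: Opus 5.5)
Your proposal is correct and is the argument the paper means when it calls this an immediate consequence of Theorem~\ref{thm:hasunuma-kle2}: apply it to a path on $2\lfloor(\delta(G)+1)/2\rfloor\le\delta(G)+1$ vertices, take alternate edges of the resulting subtree, and use that deleting fewer edges preserves $k$-connectivity. Reading ``connected'' as ``$k$-connected'' is also the right interpretation, since for $k=2$ a connected graph that is not $2$-connected has no $2$-removable edge set at all.
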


For general $k\ge 1$, we advance Halin's theorem from single edge to matchings and prove the following.

\begin{theorem}\label{thm:two-matching}
For $k\ge 1$, every $k$-connected graph $G$ with $\delta(G)\ge k+1$ contains a $k$-removable $2$-matching, unless $k=1$ and $G$ is a cycle.
\end{theorem}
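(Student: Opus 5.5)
Our plan is to first use Theorem~\ref{thm:halin} to fix one $k$-removable edge $e_1=u_1v_1$, and then look for a second $k$-removable edge disjoint from it. The natural route is to pass to the \emph{minimally} $k$-connected structure. Delete $k$-removable edges from $G$ one at a time until we reach a spanning subgraph $H$ that is minimally $k$-connected. Then $F=E(G)\setminus E(H)$ is a $k$-removable edge set, and any matching contained in $F$ is $k$-removable. So it suffices to choose the deletion order so that $F$ contains two disjoint edges. Equivalently, we must rule out the case where every maximal $k$-removable set $F$ is a star or a triangle, since a set of edges with no $2$-matching is exactly a star or a triangle.

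For this we will use Mader's theorem on minimally $k$-connected graphs. Every cycle in a minimally $k$-connected graph contains a vertex of degree $k$, and such a graph has at least $\frac{(k-1)n+2}{2k-1}$ vertices of degree $k$. Since $\delta(G)\ge k+1$, every vertex of degree $k$ in $H$ is incident with an edge of $F$. So $V(G[F])$ contains every degree-$k$ vertex of $H$, giving
\[|V(G[F])|\ \ge\ \frac{(k-1)n+2}{2k-1}.\]
If $F$ is a star, it covers at most $|F|+1$ vertices, all of whose leaves have degree $k$ in $H$. If $F$ is a triangle, it covers only $3$ vertices.

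The main obstacle is exactly the star case: the edges of $F$ all meet a common center $c$. We plan to handle it by a switching argument. Take a leaf edge $cx\in F$ and put it back into $H$. By maximality, $H+cx-e'$ fails to be $k$-connected for each $e'$ not at $c$. Alternatively, we try to apply Theorem~\ref{thm:halin} to $G-c$, which is $(k-1)$-connected with minimum degree at least $k$. Using this, together with the degree condition at the leaves (each leaf $x$ has $\deg_G(x)\ge k+1$ but $\deg_H(x)=k$), we aim to locate a removable edge avoiding $c$.

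The key tool here is the standard fact that an edge $e=xy$ is non-removable precisely when there is a $k$-separator $S$ of $G-e$ with $x,y$ in different components. We then analyse the fragments (smallest sides) of such separators for the edges at $c$. The cleanest way is to pick, among all non-removable edges avoiding $c$, one whose fragment $A$ is minimal. Then we argue that $A$ must contain a vertex of degree at least $k+1$ whose incident edges give a removable edge disjoint from $cx$, contradicting the choice.

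For $k=1$ this analysis degenerates. The graph $G$ is then $2$-edge-connected around the removed structure, and the only way every removable edge can pass through one vertex (or lie in a triangle) is when the non-tree part is a single cycle. Tracing this back shows that $G$ must be a cycle, which is the stated exception. We expect the fragment-minimality argument in the star case for general $k$ to be the hardest step, and we would attempt it first via the Mader-type counting of degree-$k$ vertices on cycles of $H$ through $c$.
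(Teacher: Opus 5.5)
Your opening reduction is the same as the paper's treatment of $k\ge 3$: take a maximal $k$-removable set $F$, so $H=G-F$ is minimally $k$-connected and $G[F]$ is a star or a triangle, and use Mader's count for the triangle. But the star case is the whole content of the theorem, and you leave it unproved. The mechanism you sketch also aims at the wrong target. Applying Theorem~\ref{thm:halin} to $G-c$, or taking a non-removable edge avoiding $c$ with a minimal fragment, can at best produce a single edge $e$ avoiding $c$ that is $k$-removable in $G$. Two disjoint edges that are each $k$-removable need not be removable together; what you need is that $G-e-cx$ is $k$-connected for some $cx\in F$.

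The single-edge statement is in fact easy. All degree-$k$ vertices of $H$ lie in $N_G[c]$, so Mader's forest theorem makes $G[Z]$ a forest, where $Z=V(G)\setminus N_G[c]$. A $(k-1)$-cut $Q$ of $G-e$ cannot contain $c$, since otherwise $Q\setminus\{c\}$ cuts $G-c-e$. Hence some component of $G-e-Q$ lies in $Z$; it is a tree and has a vertex of degree at most $k$ in $G$. For $G-e-cx$, however, the component missing $c$ may contain $x\notin Z$, so it need not be a tree, and the argument stalls.

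Two smaller errors in the same step. Maximality of $F$ only says $H-e'$ is not $k$-connected; it does not give that $H+cx-e'$ fails to be $k$-connected. And a non-removable edge $xy$ comes with a $(k-1)$-set, not a $k$-set, separating $x$ from $y$ in $G-xy$.

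You are looking in the right place ($G-c$), but the missing idea is to get a $2$-matching there, by induction on $k$ using the statement itself. Applied to $G-v$, with $v$ the centre of the star, it gives a $2$-matching $\{e_1,e_2\}$ in $G-v$ such that $G-v-e_1-e_2$ is $(k-1)$-connected. As above, a $(k-1)$-cut $Q$ of $G-e_1-e_2$ avoids $v$, and a component $C$ missing $v$ lies in the forest $G[Z]$. Degree counting then forces $C$ to be a path from an end of $e_1$ to an end of $e_2$, all of whose vertices have degree $k+1$. The paper rules out this configuration by choosing $F$ minimal among the maximal removable stars and applying the forest theorem once more.

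This induction needs genuine base cases, which the paper proves separately (Lemma~\ref{1-R-2} and Lemma~\ref{2-R-2}). Your $k=1$ paragraph asserts the conclusion rather than proving it. Your uniform counting cannot handle $k=2$, because triangles really occur there: in $K_5$ minus an edge, the triangle on the three vertices of degree $4$ is a maximal $2$-removable set whose complement is $K_{2,3}$.

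Even for $k\ge 3$, the bound $\frac{(k-1)n+2}{2k-1}$ you quote does not exclude the triangle when $k=3$, $n\in\{5,6\}$, or when $k=4$, $n=6$. You need Mader's sharper bound $\frac{(k-1)n+2k}{2k-1}$, which gives $n\le 4<k+2$.
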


For $k \le 3$, we can increase the size of a $k$-removable matching as follows.

\begin{theorem}\label{thm:half-delta}
(i) If $k\le 2$, then every $k$-connected graph $G$ with $\delta(G)\ge k+1$ contains a $k$-removable $\lceil (\delta(G)+1)/2\rceil$-matching, unless $G$ is the complete graph with an odd number of vertices.\\
(ii) Every $3$-connected graph $G$ with $\delta(G)\ge 5$ contains a $3$-removable $\lceil (\delta(G)+1)/2\rceil$-matching.
\end{theorem}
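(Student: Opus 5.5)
The plan is to take a maximum $k$-removable matching and show it is large enough, using Mader's theorem on critical edges: in a $k$-connected graph, every cycle consisting of $k$-critical (non-$k$-removable) edges contains a vertex of degree $k$. I do not intend to build the matching from Hasunuma's trees. By Corollary~\ref{cor:hasunuma-matching}, Theorem~\ref{thm:hasunuma-kle2} only gives $\lfloor(\delta+1)/2\rfloor$ for $k\le 2$. Moreover, any tree carrying a matching of size $\delta/2+1$ has at least $\delta+2$ vertices, which violates $m-1\le\delta$. So the case of even $\delta$ genuinely needs a new argument.

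\textbf{Small case $\delta=k+1$.} For $k\le 2$ the target $\lceil(k+2)/2\rceil$ equals $2$. Theorem~\ref{thm:two-matching} gives this except for cycles with $k=1$. There the only odd complete graph is $K_3$; the other cycles have to be checked against the intended reading of (i), which I would settle separately. From now on I assume $\delta\ge k+2$; for part (ii) this is exactly the hypothesis $\delta\ge 5=k+2$.

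\textbf{Setup.} Let $M$ be a maximum $k$-removable matching, with $s=|M|$, and suppose $s<\lceil(\delta+1)/2\rceil$, i.e.\ $2s\le\delta-1$. Put $G'=G-M$. Each vertex loses at most one edge, so $\delta(G')\ge\delta-1\ge k+1$. Hence $G'$ is $k$-connected with no vertex of degree $k$. By Mader's theorem, the set $F$ of $k$-critical edges of $G'$ is a forest. Let $U=V(G)\setminus V(M)$. By maximality of $M$, every edge of $G[U]$ is critical in $G'$; otherwise it could be added to $M$. So $G[U]$ is a forest. Every $u\in U$ has at most $2s\le\delta-1$ neighbours in $V(M)$, so $u$ has a neighbour in $U$. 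Thus $G[U]$ is a forest without isolated vertices, and it has leaves. A leaf $u$ has exactly one $U$-neighbour, so it is adjacent to at least $\delta-1\ge 2s$ vertices of $V(M)$, i.e.\ to all of $V(M)$, with $2s=\delta-1$. In particular $\delta$ is odd and each leaf of $G[U]$ sees both ends of every edge of $M$.

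\textbf{Exchange step.} Now take two leaves $u,u'$ of $G[U]$ in different components, or at distance at least $3$ in the forest. Fix $xy\in M$. Replace $xy$ by the two edges $ux,u'y$, giving $M'=M-xy+ux+u'y$ of size $s+1$. It remains to show $M'$ is $k$-removable for a suitable choice of $xy$, $u$, $u'$. Here I would use Mader's forest structure again, now in $G-(M\setminus\{xy\})$. The edges $ux$, $uy$, $u'x$, $u'y$ together with $xy$ and the $U$-paths form many cycles. They cannot all consist of critical edges, so some exchange preserves $k$-connectivity. If $G[U]$ is a single edge or a star, then $U$ is small and $n\le\delta+2$. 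This small situation needs a direct count; it is exactly where $K_{\delta+1}$ with $\delta$ even is excluded, since then $n$ is odd and no matching of size $\delta/2+1$ exists at all.

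\textbf{Main obstacle.} I expect the hardest step to be this exchange argument. One must verify that after removing $ux$ and $u'y$ (and restoring $xy$) no $(k-1)$-cut appears. The tool would be the standard fact that a critical edge $e$ lies in a $k$-fragment whose separator has size $k-1$ in $G'-e$. Applied to the cycles above, each time one gets a vertex of degree $k$, contradicting $\delta(G')\ge k+1$. Making this rigorous for $k=3$ requires care: the two new edges removed simultaneously could together create a $2$-cut. I would handle this by removing them one at a time, applying Mader's theorem in each intermediate graph, whose minimum degree stays at least $\delta-1\ge k+1$ because each vertex still loses at most one edge.
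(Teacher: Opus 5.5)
\textbf{The main gap: even $\delta$.} Your argument breaks down in exactly the case that carries the difficulty. From $s<\lceil(\delta+1)/2\rceil$ you get $2s\le\delta-1$ only when $\delta$ is odd. For even $\delta$ the contradiction hypothesis is $s\le\delta/2$, i.e.\ $2s\le\delta$. Then a vertex $u\in U$ may have all $\delta$ of its neighbours in $V(M)$. So $G[U]$ can have isolated vertices or be edgeless, and the leaf argument never starts. Your conclusion ``$\delta$ is odd'' comes from this slip. It also clashes with your later claim that the small case is where $K_{\delta+1}$ with even $\delta$ gets excluded.

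This configuration actually occurs. Take $K_\delta\vee\overline{K_m}$ with $\delta$ even and $M$ a perfect matching of the $K_\delta$. Then $M$ is $k$-removable, $2s=\delta$, and $U=V(\overline{K_m})$ is independent and completely joined to $V(M)$. A larger removable matching $M-xy+ux+u'y$ does exist, but your method cannot certify it. Once a first non-critical edge, say $ux$, is removed, the second new edge must lie in $G[\{y\}\cup U\setminus\{u\}]$. That graph is a star at $y$, so there is no cycle to feed into Mader's theorem.

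So the even case is not covered by your proposal at all. It is the only source of the exceptional graphs, and it is where the paper does essentially all of its work. The paper takes a removable vertex set $X$ from Theorem~\ref{thm:ckl} and applies Hall's theorem. Failure of Hall forces $\delta$ even, $G[X]$ complete and $G[X,N_G(X)]$ complete bipartite. It then uses Mader's forest theorem, plus the separate Lemmas~\ref{2-R-3} and~\ref{3-R-4} for $\delta=4$ and $\delta=6$.

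\textbf{The exchange step for odd $\delta$.} Here the exchange step is only asserted. One application of Mader's theorem to a cycle through $x,y,u,u'$ yields a single non-critical edge. For cycles through $xy$ that edge may be $xy$ itself, and the theorem says nothing about a second edge removed at the same time. ``Distance at least $3$'' is not the relevant condition. The claim that a star-shaped $G[U]$ forces $n\le\delta+2$ is also false: a star with $t$ leaves gives $n=\delta+t$.

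The odd case can be completed along your lines by applying Mader's theorem twice.
\begin{itemize}
\item In $G_0=G-(M\setminus\{xy\})$, the $4$-cycle $xuyu'$ avoids $xy$, so it has a non-critical edge, say $ux$.
\item $G_0-ux$ still has minimum degree at least $\delta-1\ge k+1$. A cycle in $G[\{y\}\cup U\setminus\{u\}]$ through $y$ and two leaves of one component of $G[U]-u$ then gives a second non-critical edge, disjoint from $(M\setminus\{xy\})\cup\{ux\}$.
\item If $G[U]$ is a path, choose $xy$ with both ends adjacent to the neighbour of $u$. This is possible when $s\ge 2$; $\delta=3$ is covered by Theorem~\ref{thm:two-matching}.
\item If $G[U]$ is a single edge, then $G=K_{\delta+1}$ has even order.
\end{itemize}
This repair relies on every leaf of $G[U]$ seeing all of $V(M)$, which is precisely what is lost for even $\delta$.

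You are right that cycles $C_n$, $n\ge 4$, are genuine exceptions to (i) as literally stated; the paper's Theorem~\ref{2-R-half-m} excludes them.
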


For a $k$-connected graph $G$ with $k\ge 4$, we need a stronger lower bound on $\delta(G)$ to construct $k$-removable $\lceil(\delta(G)+1)/2\rceil$-matching.

\begin{theorem}\label{thm:3k-1}
For $k \ge 4$, every $k$-connected graph $G$ with $\delta(G) \ge 3k-1$ contains a $k$-removable $\lceil(\delta(G)+1)/2\rceil$-matching, unless
$G$ is the complete graph with an odd number of vertices.
\end{theorem}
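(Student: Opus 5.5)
We argue by taking a maximum $k$-removable matching and deriving a contradiction if it is too small. Let $d=\delta(G)\ge 3k-1$ and $t=\lceil (d+1)/2\rceil$. Suppose $M$ is a $k$-removable matching of maximum size with $|M|=s<t$. Put $H=G-M$, and let $U=V(M)$, so $|U|=2s\le d$.

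Every vertex $v$ has $\deg_H(v)\ge d-1\ge 3k-2$. Every edge of $H$ avoiding $U$ would extend $M$, so by maximality no edge $xy$ of $G-U$ is $k$-removable in $H$: for each such $xy$, $H-xy$ has a $(k-1)$-separator $S_{xy}$ with $x,y$ in different components.

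The first step is to show that $G-U$ still contains edges. Any $w\notin U$ has at least $d-2s\ge 0$ neighbours outside $U$. This is positive when $2s<d$. In the boundary case $2s=d$ (which needs $d$ even), $w$ may be adjacent to all of $U$. If this holds for every $w\notin U$, we get a very dense structure. We then either swap an edge of $M$ for two edges through vertices outside $U$, or show that $G$ is complete on an odd number of vertices. This is where the exceptional case $K_{2m+1}$ should appear: there $n=d+1$ is odd, a matching has at most $d/2<t$ edges, and no larger matching exists at all. For non-complete $G$ with $n\ge d+2$, we use the high degree to perform an alternating exchange producing an edge disjoint from $U$.

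The main step follows the standard Halin/Mader machinery on critical edges. Take all edges $xy$ of $H-U$. Each lies in a "fragment" system: for a separator $S$ of $H-xy$ with $|S|=k-1$, let $A$ be the side containing $x$ and choose $A$ minimal. The side $A\cup S\cup\{x\}$, i.e. $A$ with its separator, is a $k$-separator-type fragment in $H$. Mader's lemma on $k$-critical edges, or rather its minimal-fragment version, gives two facts when $\delta(H)\ge \lfloor 3k/2\rfloor$ or so. First, a minimal fragment $A$ satisfies $|A|\ge \delta(H)-k+1$. Second, $A$ must contain a vertex all of whose incident edges are $k$-removable in $H$, or else the edges of $H$ that are not $k$-removable form a forest-like structure (Mader: the non-removable edges in a cycle of critical edges force a degree-$k$ vertex).

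Applying Mader's theorem that every cycle consisting of $k$-critical edges contains a vertex of degree $k$, we get the key fact. Since $\deg_H\ge 3k-2>k$, the critical edges of $H$ inside $G-U$ form a forest $F$ on $V(G)\setminus U$. Each vertex outside $U$ has at least $d-2s$ edges into $V\setminus U$, all of which lie in $F$. A forest has a vertex of degree at most 1, which forces $d-2s\le 1$, i.e. $s\ge (d-1)/2$. When $d$ is odd, $t=(d+1)/2$, so we need to gain one more edge. When $d$ is even we are already in the boundary case treated above.

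The main obstacle is closing this final gap of one edge. To handle it we use the extra room $d\ge 3k-1$. Consider a leaf $w$ of $F$ with neighbour $w'$. Vertex $w$ has at least $d-1-1\ge 3k-3$ neighbours in $U$. A counting argument on the separators $S_{ww'}$ (each of size $k-1$) then produces an edge $uu'\in M$ and two vertices $w,z$ such that $M-uu'+\{wu,zu'\}$, or $M+ww'$ after re-routing, is $k$-removable. Comparing $3k-3$ with $2(k-1)$ plus the separator sizes is where the bound $3k-1$ is designed to be used, and this is the delicate case analysis I expect to dominate the proof.
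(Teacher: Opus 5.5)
Your reduction is correct, and it needs only $\delta(G)\ge k+2$. Take a maximum $k$-removable matching $M$ with $|M|=s<t$. Every edge of $G-U$ is $k$-critical in $H=G-M$, and $\delta(H)\ge d-1>k$. Mader's theorem (every cycle of $k$-critical edges contains a vertex of degree $k$) then makes $G-U$ a forest. A vertex of degree at most one in that forest gives $2s\ge d-1$, i.e.\ $s=t-1$ in both parities. (The paragraph on minimal fragments is not needed, and its second ``fact'' is garbled as stated; drop it.)

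\textbf{The gap.} The proof stops exactly where the theorem's content begins. Two things are only announced, never proved:
\begin{itemize}
\item getting from $t-1$ to $t$, and
\item showing that the only obstruction is $K_{d+1}$ with $d$ even.
\end{itemize}
The ``alternating exchange'' in the boundary case and the ``counting argument on the separators $S_{ww'}$'' are never specified. You also never check that a modified matching such as $M-uu'+\{wu,zu'\}$ leaves a $k$-connected graph. Because $M$ is an arbitrary maximum matching with no structure, deleting $wu$ and $zu'$ from $H+uu'$ can create new $(k-1)$-separators, and nothing in your sketch controls them. Note also that everything you have actually proved uses only $\delta(G)\ge k+2$. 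So the hypothesis $\delta(G)\ge 3k-1$ must carry the whole missing step; without it you would be proving the case $\delta(G)\ge k+2$ of Conjecture~\ref{con:matching}, which the paper leaves open for $k\ge 4$. ``Comparing $3k-3$ with $2(k-1)$'' is not an argument.

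\textbf{The missing idea.} Build the matching so that removability is automatic, instead of perturbing an arbitrary maximum one. The paper proceeds as follows.
\begin{itemize}
\item It applies Theorem~\ref{thm:ckl} repeatedly to get a set $X$ of $t$ vertices with $G-X$ $k$-connected. This is exactly where $d\ge 3k-1$ is used, since $d-(t-1)=\lfloor (d+1)/2\rfloor\ge\lfloor 3k/2\rfloor$.
\item Each $x\in X$ then has at least $\lfloor (d+1)/2\rfloor\ge k+1$ neighbours in $G-X$. Hence any matching of $G[X,V(G)\setminus X]$ covering $X$ is $k$-removable: each $x$ keeps at least $k$ neighbours in a $k$-connected graph.
\item If Hall's condition fails, degree counting forces $d$ even, $G[X]$ complete, and $G[X,N_G(X)]$ complete bipartite with $|N_G(X)|=t-1$.
\item If $V(G)=X\cup N_G(X)$, then $G\cong K_{d+1}$; this is where the exception arises.
\item Otherwise, take an edge $xx'$ inside $X$, an edge $e$ of $G-X$ with $(G-X)-e$ still $k$-connected and an endpoint outside $N_G(X)$, and a matching of $X\setminus\{x,x'\}$ into $N_G(X)\setminus V(e)$. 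Together these give the required matching.
\item If no such $e$ exists, every edge of a set $F$ with $(G-X)-F$ minimally $k$-connected lies inside $N_G(X)$. Theorem~\ref{Forest: Mader} then makes $G-X-N_G(X)$ a forest. A vertex $u$ of degree at most one in it has $d_G(u)\le 1+d/2<d$, a contradiction.
\end{itemize}
So your Mader-forest idea is the right tool, but it has to be applied to this rigid configuration, not to an unstructured maximum matching.
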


For $1$-connected graphs, we further sharpen the bound as follows.

\begin{theorem}\label{thm:half-n-min}
Let $G$ be a connected graph on $n$ vertices with $\delta(G)\ge 3$. Then $G$ contains a $1$-removable $\min\{\lfloor n/2\rfloor,\, \delta(G)\}$-matching.
\end{theorem}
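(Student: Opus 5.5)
The plan is to reduce the statement to finding a matching that avoids some spanning tree. If $T$ is a spanning tree of $G$ and $M\subseteq E(G)\setminus E(T)$ is a matching, then $G-M\supseteq T$ is connected, so $M$ is $1$-removable. Conversely, every $1$-removable matching avoids some spanning tree. So, writing $t=\min\{\lfloor n/2\rfloor,\delta(G)\}$, it suffices to show that some $1$-removable matching of $G$ has size at least $t$. I would take an extremal choice: a $1$-removable matching $M$ of maximum size $m$, and among those one for which $H=G-M$ has the fewest bridges. Assuming $m<t$, I would derive a contradiction.

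Let $U$ be the set of vertices not covered by $M$. Since $m<\lfloor n/2\rfloor$, we have $|U|\ge 2$. Every vertex of $U$ has degree at least $\delta(G)\ge 3$ in $H$, because $M$ does not touch it. Maximality of $M$ gives two facts. First, every edge of $G$ with both ends in $U$ is a bridge of $H$. Second, more generally, no edge $uv$ with $u,v\in U$ can be added. I would then analyse the bridge--block tree of $H$, whose nodes are the $2$-edge-connected components of $H$ joined by bridges. The key observations are:
\begin{itemize}
\item a vertex $u\in U$ incident only to bridges of $H$ is a cut vertex of the tree structure, with $\deg_H(u)\ge 3$;
\item a leaf block $B$ either has at least two vertices and hence contains a cycle, or is a single vertex, which cannot lie in $U$ since such vertices have degree at least $3$.
\end{itemize}
In the first case, every edge of $B$ is a non-bridge. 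Therefore, if $B$ contains two adjacent vertices of $U$, adding that edge contradicts maximality. Otherwise $B$ contains an $M$-covered vertex $w$ with $wx\in M$.

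Next comes an exchange step. Take $u\in U$ and a neighbour $w$ of $u$ with $wx\in M$ such that the edge $uw$ lies on a cycle of $H$. Replace $wx$ by $uw$, setting $M'=M-wx+uw$. Then $G-M'=H-uw+wx$ is still connected, because $uw$ was not a bridge of $H$. Moreover, if $x$ now becomes adjacent, through a non-bridge, to some vertex of $U\setminus\{u\}$, the matching can be enlarged. I would use this swap in the style of alternating paths, as in the proof of Berge's theorem. Its purpose is to show that the extremal $M$ forces every neighbour of each $u\in U$ to be $M$-covered, and to be joined to $u$ in a restricted way, namely through bridges or swaps that create no new room. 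Counting then finishes the argument: $u$ has at least $\delta(G)>m$ neighbours, while each $M$-edge $wx$ can absorb only a bounded number of them before a swap yields either a larger $1$-removable matching or one with fewer bridges in $G-M$. Applying this to two distinct vertices of $U$ lying in the same leaf block, or on a common cycle, should give the contradiction with $m<\delta(G)$. Here $\delta\ge 3$ is used to guarantee that vertices of $U$ cannot be leaves of the bridge--block tree and that the cycles needed for the swaps exist.

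I expect the main obstacle to be exactly this exchange and counting step. One must control how swapping $wx$ for $uw$ changes the bridge structure, since $wx$ re-enters the graph while $uw$ leaves it. One must also rule out configurations in which every vertex of $U$ is attached only through bridges to distinct blocks, as happens in tree-like graphs built from cliques. My first attempt would be to pass to a leaf block $B$ of the bridge--block tree. Inside $B$ every edge is a non-bridge, so the problem becomes finding a large matching in a $2$-edge-connected piece whose internal vertices have high degree, and I would apply the swap argument there. The small cases, where $t=\lfloor n/2\rfloor$ and a perfect or near-perfect matching is needed, may have to be checked separately.
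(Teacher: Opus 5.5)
\textbf{There is a genuine gap: the exchange-and-counting step, which is the whole content of your proof, is only announced, and it does not work as sketched.}

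The preliminary observations are routine: edges inside $U$ are bridges of $H$, and leaf blocks contain cycles. The problems start where maximality of $M$ is supposed to yield $m\ge\min\{\lfloor n/2\rfloor,\delta(G)\}$.

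\begin{itemize}
\item Counting the neighbours of a single uncovered vertex $u$ against the $M$-edges gives at best $\delta(G)\le 2m$, because $u$ may be adjacent to both ends of an $M$-edge. A triangle $uwx$ with $wx\in M$ is consistent with maximality.
\item Even $\delta(G)\le 2m$ needs every neighbour of $u$ to be covered. This can fail, since nothing you prove stops $u$ from being incident only to bridges of $H$, some leading to other vertices of $U$.
\item To reach $m\ge\delta(G)$ you need the two-vertex augmentation $M-wx+uw+vx$ with $u,v\in U$. Its $1$-removability means $H+wx-uw-vx$ is connected, and that is not automatic. It fails if $uw$ is a bridge of $H$ with $x$ on the same side as $w$. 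It also fails if $\{uw,vx\}$ is a $2$-edge cut of $H$ with $w,x$ on the same side.
\item Your single swap $M-wx+uw$ does keep $G-M$ connected. But you never show it lowers the number of bridges: deleting the non-bridge $uw$ can create new bridges. So the tie-breaking rule does no work, and the bridge-heavy configurations you say must be ``ruled out'' are left untreated.
\end{itemize}

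\textbf{How the paper avoids exchange arguments.} The observation you are missing is this: for \emph{any} matching $M$, every component of $G-M$ has minimum degree at least $\delta(G)-1$, hence at least $\delta(G)$ vertices.
\begin{itemize}
\item When $n<2\delta(G)$, every matching is therefore $1$-removable. Dirac's Hamiltonicity theorem gives a $\lfloor n/2\rfloor$-matching, so the ``small cases'' you postpone are the trivial ones.
\item When $n\ge 2\delta(G)$ (Theorem~\ref{thm:1-R-de}), take a $\delta$-matching $M$ from a long path (Theorem~\ref{Dirac}). Take a maximal $M'\subseteq M$ with $G-M'$ connected, and $e\in M\setminus M'$. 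Then $G-(M'\cup\{e\})$ has two components $G_1,G_2$ with $\delta(G_i)\ge\delta-1\ge 2$.
\item Theorem~\ref{2-R-half-m} then gives $1$-removable $\lceil\delta/2\rceil$-matchings $M_i\subseteq E(G_i)$. Their union is $1$-removable because the edges of $M'\cup\{e\}$ still join $G_1$ to $G_2$.
\item The exceptional cases, namely a cycle when $\delta=3$ or $K_\delta$ with $\delta$ odd, are settled by a direct exchange across the cut.
\end{itemize}
The ingredient your plan lacks is this recursion through the half-size theorem on the two sides of a matching edge-cut.
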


One might guess that, for a $k$-connected graph $G$ with $\delta(G)\ge k+1$, larger order forces a larger $k$-removable matching. However, the following example implies that the answer is negative. Let $k,\delta$ be two positive integers such that $\delta=k+1$.
The complete bipartite graph $K_{\delta,\,n-\delta}$ with $n\geq 2\delta$ satisfies $\delta(K_{\delta,\,n-\delta})=\kappa(K_{\delta,\,n-\delta})=\delta>k$ and $\alpha'(K_{\delta,\,n-\delta})=\delta$, where $\alpha'(G)$ denotes the maximum size of a matching in a graph $G$. Obviously, it has no $k$-removable $(\delta+1)$-matching.

Motivated by the above properties of $K_{\delta,\,n-\delta}$, we now define the following function concerning the size of $k$-removable matching in $k$-connected graphs $G$ such that $|V(G)| \ge 2\delta$ and $\delta(G)\ge \delta$.
Let $\delta$ and $k$ be two integers with $\delta > k \ge 1$.
Define $f(k,\delta)$ as the largest integer such that every $k$-connected graph $G$ with $|V(G)| \ge 2\delta$ and $\delta(G)\ge \delta$ has a $k$-removable $f(k,\delta)$-matching. The previous example shows that $f(k,\delta) \le \delta$. We establish the following exactly values and bounds for $f(k,\delta)$.

\begin{theorem}\label{thm:f-k-delta}
Let $\delta$ and $k$ be integers with $\delta > k \ge 1$. The following statements hold:
\begin{itemize}
\item[(i)] $(\delta+1)/2\le f(k,\delta)\le \delta$ for $\delta\ge 3k-1$.
\item[(ii)] $f(1,\delta)=1$ if $\delta=2$, and $f(1,\delta)=\delta$ if $\delta\ge 3$.
\item[(iii)] $2\lfloor(\delta-2)/2\rfloor\le f(2,\delta)\le \delta$.
\item[(iv)] $2\le f(k,k+1)\le k$ for $k\ge 2$; in particular, $f(2,3)=2$.
\end{itemize}
\end{theorem}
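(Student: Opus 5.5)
Theorem~\ref{thm:f-k-delta} is essentially a collation of the earlier theorems together with the $K_{\delta,n-\delta}$ example, so I would prove each part by quoting the matching result and pairing it with a construction for the other side. The upper bound $f(k,\delta)\le\delta$ in every part comes from the complete bipartite graph $K_{\delta,n-\delta}$ with $n\ge 2\delta$. This graph is $\delta$-connected, hence $k$-connected, and has minimum degree $\delta$. Its matching number is $\delta$, so it contains no matching of size $\delta+1$ at all, let alone a $k$-removable one.

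For (i), with $\delta\ge 3k-1$, the lower bound needs a $k$-removable $\lceil(\delta+1)/2\rceil$-matching in every admissible $G$. For $k\ge4$ this is Theorem~\ref{thm:3k-1}, for $k=3$ ($\delta\ge8\ge5$) it is Theorem~\ref{thm:half-delta}(ii), and for $k\le2$ it is Theorem~\ref{thm:half-delta}(i). The complete-graph exceptions must be handled separately. Suppose $G=K_{m}$ with $m$ odd and $m\ge\max\{2\delta,\delta+1\}$. If $\delta(G)=\delta'\ge\delta$ and $\lceil(\delta'+1)/2\rceil$ fails, then $m=\delta'+1\ge2\delta$, so $\delta'\ge 2\delta-1$. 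In that case $\lceil(\delta+1)/2\rceil\le\lceil\delta'/2\rceil$. Moreover, the theorems apply to the actual $\delta(G)=\delta'$, and $\lceil(\delta'+1)/2\rceil\ge\lceil(\delta+1)/2\rceil$, so monotonicity suffices once the complete graph is dealt with. For $K_m$ itself, removing any matching of size $\lfloor m/2\rfloor$ lowers each degree by at most one, and $K_m-M$ stays $(m-2)$-connected. Since $m-2\ge k$, we get a removable matching of size at least $\delta\ge(\delta+1)/2$.

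For (ii), when $\delta\ge3$ Theorem~\ref{thm:half-n-min} yields a $1$-removable $\min\{\lfloor n/2\rfloor,\delta(G)\}$-matching. Because $n\ge2\delta$ and $\delta(G)\ge\delta$, this minimum is at least $\delta$. Combined with the upper bound, $f(1,\delta)=\delta$. When $\delta=2$, Halin's Theorem~\ref{thm:halin} gives $f(1,2)\ge1$, and cycles $C_n$ with $n\ge4$ show $f(1,2)\le1$: deleting any two edges of a cycle disconnects it.

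For (iii), the lower bound $2\lfloor(\delta-2)/2\rfloor$ is presumably obtained by iterating Theorem~\ref{thm:half-delta}(i) or Corollary~\ref{cor:hasunuma-matching}. This is the step I expect to be the main obstacle. My first attempt would apply Theorem~\ref{thm:hasunuma-kle2} with $k=2$ to a well-chosen tree $T$, such as a double star or caterpillar whose edge set contains a large matching, with $\Delta(T)+2\le\delta$ and $m-1\le\delta$. A caterpillar on $\delta+1$ vertices with a long spine and one leaf per spine vertex has a perfect matching of size about $\delta/2$, which is not enough. So instead I would take a spine path of length about $\delta-2$, pick every edge of it, and handle parity to obtain $2\lfloor(\delta-2)/2\rfloor$. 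I would expect the paper's later section to provide this refinement directly, possibly via a separate lemma, and I would quote that lemma.

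For (iv), Theorem~\ref{thm:two-matching} gives $f(k,k+1)\ge2$ for $k\ge2$. For the upper bound $f(k,k+1)\le k$, I would use the complete graph $K_{2k+2}$ on $n=2\delta$ vertices, or $K_{k+1,k+1}$ with extra structure, and verify that any $(k+1)$-matching removal creates a vertex of degree $k$ whose neighborhood separates. The simplest candidate is $K_{k+1,k+1}$. A perfect matching removal there leaves a $k$-regular graph $K_{k+1,k+1}-M$, which is actually $k$-connected, so this needs care. I would instead look for a $(k+1)$-regular $k$-connected graph on $2k+2$ vertices in which every perfect matching is a separating structure. Finally, $f(2,3)=2$ follows by combining both bounds.
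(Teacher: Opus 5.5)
The lower bound in (iii) and the upper bound in (iv) are not proved; the rest of your proposal is fine. Your treatment of (ii), of $f(k,\delta)\le\delta$ via $K_{\delta,n-\delta}$, of the lower bound in (iv), and of (i) matches the paper, which proves this theorem purely by collating earlier results. You are even more careful than the paper about the odd complete graphs in (i).

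\textbf{Part (iii).} Neither route you suggest can work.

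Theorem~\ref{thm:hasunuma-kle2} needs $m-1\le\delta(G)$. A tree on at most $\delta+1$ vertices has matching number at most $\lfloor(\delta+1)/2\rfloor$, which is below $2\lfloor(\delta-2)/2\rfloor$ for $\delta=6$ and for all $\delta\ge 8$. So the tree route can never beat Corollary~\ref{cor:hasunuma-matching}, and the edges of a spine path are not a matching anyway.

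Iterating Theorem~\ref{thm:half-delta}(i) also fails, because a removable matching of $G-M_1$ need not avoid $V(M_1)$.

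The missing idea is the splitting argument of Theorem~\ref{2Connected(m-3)matching}, which parallels Theorem~\ref{thm:1-R-de}:
\begin{itemize}
\item Take a $(\delta-2)$-matching $M$ along a long path (Theorem~\ref{Dirac}).
\item Choose $M'\subseteq M$ minimal with $G-M'$ not $2$-connected. Then $G-M'$ has a cut vertex $x$, $G-M'-x$ has two components $G_1,G_2$, and $M'=E_G[G_1,G_2]$.
\item Contract $G_{3-i}\cup\{x\}$ to a single vertex $x_i$. This gives a $2$-connected graph $G_i'$ in which every vertex other than $x_i$ has degree at least $\delta-1$.
\item Apply Theorem~\ref{2-R-half-m} to $G_i'$, first contracting an edge $x_iy_i$ supplied by Lemma~\ref{separating set} when $d(x_i)<\delta-1$. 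This yields a $2$-removable matching $M_i$ of size at least $\lceil(\delta-1)/2\rceil-1$ lying inside $G_i$.
\item $M_1\cup M_2$ is then automatically a matching, and one checks that $G-(M_1\cup M_2)$ is $2$-connected.
\end{itemize}
This gives $2(\lceil(\delta-1)/2\rceil-1)=2\lfloor(\delta-2)/2\rfloor$ for $\delta\ge 5$. The case $\delta=4$ is covered by Theorem~\ref{thm:two-matching}, and $\delta=3$ is trivial.

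\textbf{Part (iv).} The bound $f(k,k+1)\le k$, and hence $f(2,3)=2$, is never proved. Each of your attempts fails:
\begin{itemize}
\item $K_{2k+2}$ plainly has a $k$-removable perfect matching.
\item A degree-$k$ vertex whose neighbourhood separates is no obstruction to $k$-connectivity; you need a $(k-1)$-separating set.
\item The search for a $(k+1)$-regular example on $2k+2$ vertices is misdirected. For $k=2$ the only cubic graphs on six vertices are $K_{3,3}$ and the triangular prism, and each has a perfect matching whose removal leaves $C_6$.
\end{itemize}

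The paper uses the non-regular graph $\overline{K_{k-1}}\vee C_{n-k+1}$ for any $n\ge 2k+2$. Call the $k-1$ vertices of $\overline{K_{k-1}}$ the hubs. This graph is $k$-connected with minimum degree $k+1$. The hubs are independent, so any $(k+1)$-matching $M$ has at most $k-1$ edges at the hubs, hence at least two cycle edges. Deleting the hubs from $G-M$ therefore leaves the cycle split into two paths, so the hubs form a $(k-1)$-separating set of $G-M$. For $k=2$ this graph is a wheel.

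\textbf{A defect in (i).} Statement (i) as written includes $(k,\delta)=(1,2)$, since $3k-1=2$. There your own cycle example from (ii) gives $f(1,2)=1<3/2$. The cause is that the summary Theorem~\ref{thm:half-delta}(i) drops the cycle exception of Theorem~\ref{2-R-half-m}. This defect lies in the statement itself, and the paper's collation shares it. Still, your argument for (i) should have caught the clash with your own part (ii) and excluded that case.
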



\paragraph{Organization.}
Section~\ref{sec:prelim} collects preliminaries. Section~\ref{sec:two-matching} proves Theorem~\ref{thm:two-matching}. Section~\ref{sec:half-delta} establishes Theorem~\ref{thm:half-delta}. Section~\ref{sec:f-k-delta} proves Theorem~\ref{thm:half-n-min} and Theorem~\ref{thm:f-k-delta}. Section~\ref{sec:conclusion} suggests several problems.

\section{Preliminaries}\label{sec:prelim}
This section introduces basic notation and several classical results used throughout the paper.

Let $G$ be a graph with vertex set $V(G)$ and edge set $E(G)$. For each $v\in V(G)$, the {\it degree} of $v$ is $d_G(v)$, the number of edges incident to $v$. For $k\ge 1$, define
$V_k(G)=\{v\in V(G): d_G(v)=k\}$ and $V_{\ge k}(G)=\{v\in V(G): d_G(v)\ge k\}$.

The {\it open neighborhood} of $v$ is $N_G(v)=\{u\in V(G): uv\in E(G)\}$, and the {\it closed neighborhood} is $N_G[v]=N_G(v)\cup\{v\}$. For $X\subseteq V(G)$, let
$N_G(X)=\left(\bigcup_{x\in X} N_G(x)\right)\setminus X$.
If $H$ is a subgraph of $G$ and $v\in V(G)\setminus V(H)$, define $d_H(v)=|N_G(v)\cap V(H)|$. For $M\subseteq E(G)$, let $V(M)$ denote the set of vertices incident to at least one edge of $M$.

For disjoint sets $A,B\subseteq V(G)$, let $E_G[A,B]$ be the set of edges with one endpoint in $A$ and the other in $B$, and let $G[A,B]$ be the subgraph with vertex set $A\cup B$ and edge set $E_G[A,B]$. When $A=\{x\}$, we write $E_G[x,B]$ for $E_G[A,B]$.

We will repeatedly use the following results.

\begin{theorem}[Diestel~\cite{Diestel2017}, p.~30]\label{Dirac}
Every connected graph $G$ contains a path of length at least $\min\{2\delta(G),\,|V(G)|-1\}$.
\end{theorem}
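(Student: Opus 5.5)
The statement is the classical Erdős–Gallai/Dirac longest-path bound, which I will prove by the standard longest-path argument.

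Let $P=v_0v_1\cdots v_\ell$ be a longest path in $G$ and put $\delta=\delta(G)$. By maximality, every neighbour of $v_0$ and every neighbour of $v_\ell$ lies on $P$. If $\ell\ge 2\delta$ we are done, so assume $\ell\le 2\delta-1$. The aim is to show that then $V(P)=V(G)$, which gives $\ell=|V(G)|-1$ and hence the claimed bound.

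The first step is the standard crossing argument producing a cycle through all of $V(P)$. Let
\[
A=\{i: v_0v_{i+1}\in E(G)\},\qquad B=\{i: v_iv_\ell\in E(G)\}.
\]
Both are subsets of $\{0,\dots,\ell-1\}$, and $|A|,|B|\ge\delta$. Since $|A|+|B|\ge 2\delta>\ell$, there is an index $i\in A\cap B$. If $i=0$, then $v_0v_1$ and $v_0v_\ell$ are both edges, so $v_0v_1\cdots v_\ell v_0$ is a cycle. If $i\ge 1$, the cycle is
\[
C=v_0v_{i+1}v_{i+2}\cdots v_\ell v_iv_{i-1}\cdots v_1v_0 .
\]
In either case $C$ is a cycle with $V(C)=V(P)$.

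The second step uses connectivity. Suppose some vertex lies outside $V(P)$. Since $G$ is connected, there is an edge $uv_j$ with $u\notin V(P)$. Start at $u$, step to $v_j$, and then traverse $C$ in one direction until every vertex of $C$ has been visited. This is a path with $\ell+2$ vertices, contradicting the maximality of $P$. Hence $V(P)=V(G)$.

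The argument is routine; the only point needing care is the index bookkeeping in the crossing step. One must make sure $i$ ranges over $\{0,\dots,\ell-1\}$ so that pigeonhole applies with $2\delta>\ell$, and handle the degenerate case $i=0$ separately, as above.
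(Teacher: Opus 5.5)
Your proof is correct. The paper gives no proof of this statement and simply cites Diestel, where it appears as an exercise. Your longest-path crossing argument, the same one used to prove Dirac's Hamiltonicity theorem, is the standard way to establish it.

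The only nit is the degenerate case $\ell=1$ (i.e.\ $G\cong K_2$), where $v_0v_1v_0$ is not a genuine cycle. There $V(P)=V(G)$ holds trivially, since any vertex outside $P$ adjacent to an endpoint would extend $P$.
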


\begin{theorem}[Dirac~\cite{Dirac1967}]\label{Size:Dirac}
A minimally $2$-connected graph of order $n\ge 4$ has size at most $2n-4$.
\end{theorem}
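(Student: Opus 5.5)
We argue via ear decompositions, counting edges against the number of new vertices each ear contributes. Let $G$ be minimally $2$-connected of order $n\ge 4$; recall that every $2$-connected graph has an ear decomposition $G=C\cup P_1\cup\cdots\cup P_r$ starting from any prescribed cycle $C$. Here each $P_i$ is a path whose two (distinct) endpoints lie in $C\cup P_1\cup\cdots\cup P_{i-1}$ and whose internal vertices are new. The plan is: (1) choose $C$ of length at least $4$; (2) show that no ear is a single edge; (3) count edges.

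\emph{Step 1.} A $2$-connected graph on $n\ge 4$ vertices contains a cycle of length at least $4$. Take a longest cycle $C$ and suppose it is a triangle $xyz$. Pick a vertex $w\notin V(C)$. By the fan lemma (Menger) there are two paths from $w$ to $V(C)$ that share only $w$ and end at distinct vertices, say $x$ and $y$. Concatenating these two paths with the path $x z y$ on $C$ gives a cycle of length at least $4$, a contradiction. So we start the ear decomposition with a cycle $C$ having $c\ge 4$ vertices.

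\emph{Step 2.} We claim every ear $P_i$ has at least one internal vertex. Suppose $P_i$ is a single edge $e$. The remaining ears $P_j$ with $j\neq i$ still have endpoints in the earlier part of the structure, since $e$ contributes no vertices. Hence $C\cup\bigcup_{j\ne i}P_j$ is an ear decomposition of $G-e$ spanning all of $V(G)$. Therefore $G-e$ is $2$-connected, contradicting minimality.

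\emph{Step 3.} An ear with $t_i\ge 1$ internal vertices contributes $t_i+1\le 2t_i$ edges. Summing over all ears,
$$|E(G)| \;=\; c+\sum_i (t_i+1)\;\le\; c+2\sum_i t_i \;=\; c+2(n-c) \;=\; 2n-c\;\le\; 2n-4.$$

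The only delicate point is Step 2: one must check that deleting a trivial ear that is not the last one still leaves a valid ear decomposition. This holds because attachment points of later ears are vertices, and the trivial ear adds none. Step 1 is needed because starting from a triangle would only give the bound $2n-3$.
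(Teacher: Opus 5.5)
Your proof is correct. The paper gives no argument of its own here: Theorem~\ref{Size:Dirac} is quoted from Dirac~\cite{Dirac1967} and used as a black box in the proof of Lemma~\ref{2-R-3}, where exhibiting at least $2h-3$ edges in a minimally $2$-connected graph of order $h$ gives the contradiction. So there is no paper proof to compare with. What you supply is the standard self-contained ear-decomposition count, and each step holds:
\begin{itemize}
\item The fan lemma gives two $w$--$V(C)$ paths that meet $C$ only at their distinct ends. Hence a longest cycle cannot be a triangle once $n\ge 4$.
\item Deleting a trivial ear $P_i=e$ leaves a valid open ear decomposition of $G-e$. Since $P_i$ adds no vertices, neither the attachment points of later ears nor the newness of their internal vertices change.
\item The count $|E(G)|=c+\sum_i(t_i+1)\le c+2(n-c)=2n-c\le 2n-4$ is right, including the case with no ears at all.
\end{itemize}

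Your Step~2 is the global version of an observation the paper uses locally in Lemmas~\ref{2-R-2} and~\ref{2-R-3}: that the \emph{last} ear of a minimally $2$-connected graph has an internal vertex. You show that every ear does, which is exactly what the edge count needs.

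Compared with the citation, your route costs a paragraph but buys three things. It makes the bound self-contained. It pins down where $n\ge 4$ is used: to start from a cycle of length at least $4$, since starting from a triangle only gives $2n-3$. And it proves the sharper bound $|E(G)|\le 2n-c$ for the length $c$ of any cycle of $G$. Equality holds exactly when every ear in a decomposition starting from that cycle has one internal vertex, as in $K_{2,n-2}$.
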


\begin{theorem}[Halin~\cite{Halin1971}]\label{Minimumdegree: Halin}
If $G$ is minimally $k$-connected, then $\delta(G)=k$.
\end{theorem}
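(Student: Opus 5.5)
The statement just above is Halin's theorem: a minimally $k$-connected graph $G$ has $\delta(G)=k$. It is quoted as a classical tool, so the plan below is the standard argument. Write $\kappa(G)=k$ and suppose for contradiction that every vertex has degree at least $k+1$. Minimality means that for every edge $e=xy$ the graph $G-e$ is not $k$-connected. Hence $G-e$ has a separating set $S_e$ with $|S_e|=k-1$, where $x$ and $y$ lie in different components of $G-e-S_e$. Let $A$ be the component containing $x$ and $B=V(G)\setminus(S_e\cup A)$, which contains $y$. The triple $(A,S_e,B)$ is an \emph{edge-fragment} of $e$.

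The strategy is an extremal choice. Among all edges $e$ and all choices of $S_e$, pick a fragment $A$, the side containing an endpoint of $e$, of minimum size. Since $d(x)\ge k+1$ and all neighbours of $x$ other than $y$ lie in $A\cup S_e$, we get $|A|-1+|S_e|+1\ge k+1$, so $|A|\ge 2$. Thus $A$ contains a vertex $z\neq x$. All edges of $z$ go into $A\cup S_e$, and $z$ has degree at least $k+1$, so $z$ has an edge $f=zw$ with $w\in A\setminus\{x\}$, or possibly $w\in S_e$. Take an edge-fragment $(A',S_f,B')$ for $f$ with $z\in A'$ and $w\in B'$.

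Next comes the uncrossing step. Compare the two separations $(A,S_e,B)$ and $(A',S_f,B')$ via the standard submodularity count: $|N(A\cap A')|+|N(B\cup\ldots)|$-type inequalities, i.e.
\[
|S_e\cap A'|+|S_e\cap S_f|+|A\cap S_f| + |S_e\cap B'|+|S_e\cap S_f|+|B\cap S_f| = |S_e|+|S_f| = 2k-2 .
\]
In $G$ (not $G-e$), each corner set such as $A\cap B'$ is separated from the rest by its "corner boundary" together with at most one endpoint of $e$ or $f$. Since $G$ is $k$-connected, any nonempty corner whose complement is also large must have a corner boundary of size at least $k-1$, with the extra edge accounting for the deficiency. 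I would show the following: because $w\in B'$ and $z\in A'$ with $z,w\in A$, both $A\cap A'$ and $A\cap B'$ are nonempty. Summing the two opposite-corner boundaries gives $2k-2$ by the identity above. So one of the corners $A\cap A'$ or $A\cap B'$ has boundary of size at most $k-1$ and is separated from the opposite corner (which is nonempty, e.g. it contains $y$ or $x$) by that boundary plus the edge $f$ or $e$. That corner is then an edge-fragment of $e$ or $f$ that is strictly smaller than $A$, since it misses $z$ or $w$, contradicting minimality.

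The main obstacle is the case bookkeeping in this uncrossing. One must check that the chosen smaller corner really contains an endpoint of the relevant edge, and that the opposite side is nonempty. One must also handle the case where $x$ or $y$ lies in $S_f$, or where $w$ lies in $S_e$. I would first dispose of $w\in S_e$ by re-choosing $z$ via the degree count: $z$ has at least $k+1-|S_e|\ge 2$ neighbours in $A$, so we may take $w\in A$. After that, I would track the positions of $x$ and $y$ relative to $(A',S_f,B')$ in a small case analysis. Each case either directly yields a separating set of size $k-1$ in $G$, contradicting $k$-connectivity, or yields the smaller fragment.
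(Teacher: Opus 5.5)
The paper does not prove this statement; it is quoted from Halin's 1971 paper, so your sketch has to stand on its own. Your strategy is workable: a globally smallest edge-fragment $A$ of $e=xy$, an edge $f=zw$ with $z,w\in A\setminus\{x\}$, and uncrossing against a fragment $(A',T,B')$ of $f$, with $S=S_e$ and $T=S_f$. However, the central step does not follow as written. For $P\in\{A,B\}$ and $Q\in\{A',B'\}$ put $\partial(P\cap Q)=(P\cap T)\cup(S\cap T)\cup(S\cap Q)$. Your identity says $|\partial(A\cap A')|+|\partial(B\cap B')|=|\partial(A\cap B')|+|\partial(B\cap A')|=2k-2$. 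It pairs each $A$-corner with the \emph{opposite} $B$-corner, so it says nothing about the pair $A\cap A'$, $A\cap B'$. Hence ``one of them has boundary at most $k-1$'' does not follow. Two further claims are false:
\begin{itemize}
\item The $A$-corners are not cut off by their boundary plus at most one of $e,f$. The edge $f$ leaves both of them, and $e$ also leaves whichever one contains $x$. So a small corner containing $x$ is an edge-fragment of neither edge.
\item The opposite corner need not be nonempty. The bad configurations are exactly those where $B\cap A'$ or $B\cap B'$ is empty, e.g.\ $B\subseteq T$.
\end{itemize}
In those configurations you get neither a separator of size $k-1$ in $G$ nor a smaller corner fragment. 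So your closing sentence is not true as stated.

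The missing idea is to compare $|A|$ with the fragments you already hold, $|A|\le|B|$, $|A|\le|A'|$, $|A|\le|B'|$, and not only with sub-corners. Here is how the argument closes.
\begin{itemize}
\item Both ends of $f$ lie in $A$, so a $B$-corner satisfies $N_G(B\cap Q)\subseteq\partial(B\cap Q)\cup\{x\}$. Also $w$ (for $Q=A'$) or $z$ (for $Q=B'$) lies outside $(B\cap Q)\cup N_G(B\cap Q)$. Hence $|\partial(B\cap Q)|\le k-2$ forces $B\cap Q=\emptyset$.
\item Suppose both $A$-corners had boundary at least $k$. Adding the two inequalities and using $|S|=|T|=k-1$ gives $|A\cap T|\ge|B\cap T|+2$. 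By the previous point both $B$-corners are empty, so $B\subseteq T$. Then $|A\cap T|\ge|B|+2\ge|A|+2$, which is impossible.
\item If $x\in A\cap T$, both $A$-corners are crossed only by $f$. The one with small boundary contains a strictly smaller fragment of $f$.
\item If $x\in A\cap A'$, then $A\cap B'$ is crossed only by $f$. If $|\partial(A\cap B')|\le k-1$ you are done. Otherwise $B\cap A'=\emptyset$, so $|A'|=|A\cap A'|+|S\cap A'|\ge|A|$ gives $|S\cap A'|\ge|A\cap T|+1$. But $|\partial(A\cap B')|\ge k$ rewrites as $|A\cap T|\ge|S\cap A'|+1$, a contradiction.
\item The case $x\in A\cap B'$ is symmetric.
\end{itemize}
With these comparisons the uncrossing goes through; without them it does not.
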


\begin{theorem}[Mader~\cite{Mader1972A}]\label{Forest: Mader}
Let $G$ be minimally $k$-connected and let $T$ be the set of vertices of degree $k$. Then $G-T$ is a (possibly empty) forest.
\end{theorem}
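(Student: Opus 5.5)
We would argue by contradiction using the standard ``fragment'' technique for minimally $k$-connected graphs. Suppose $G$ is minimally $k$-connected and $G-T$ contains a cycle $C$. Every vertex of $C$ then has degree at least $k+1$. For every edge $e=xy$ of $G$, minimality means $G-e$ is not $k$-connected. Hence there is a set $S_e$ with $|S_e|=k-1$, $x,y\notin S_e$, such that $x$ and $y$ lie in different components of $G-e-S_e$. Call a vertex set $F$ an \emph{$e$-fragment} if it is the union of components of $G-e-S_e$ containing exactly one endpoint of $e$. In $G$ itself, $N_G(F)\subseteq S_e\cup\{\text{the other endpoint}\}$, so $|N_G(F)|=k$. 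Among all edges $e\in E(C)$ and all $e$-fragments, we choose a pair $(e,F)$ with $|F|$ minimum. Say $e=xy$ with $x\in F$ and $y\notin F$.

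First we rule out small fragments. If $F=\{x\}$, then $N_G(x)\subseteq S_e\cup\{y\}$, which gives $d_G(x)\le k$. This contradicts $x\notin T$, so $|F|\ge 2$. Next, $x$ has a second neighbour $x'$ on $C$ with $x'\neq y$. Since the only edge leaving $F$ towards $\overline{F\cup S_e}$ is $e$, we get $x'\in F\cup S_e$. Let $f=xx'\in E(C)$, and take an $f$-fragment $F'$ together with its separator $S_f$, choosing the side of $f$ so that it interacts well with $F$.

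The heart of the proof is a crossing argument applied to the two separations $(F, S_e\cup\{y\}, \overline{F})$ and $(F', S_f\cup\{\cdot\}, \overline{F'})$. We use submodularity, $|N(A\cap B)|+|N(A\cup B)|\le |N(A)|+|N(B)|$, together with $k$-connectivity (any nonempty $A$ with nonempty complement outside $N(A)$ has $|N(A)|\ge k$). From these we aim to show that one of the corners, for instance $F\cap F'$ or $F\cap\overline{F'\cup N(F')}$, is nonempty, has exactly $k$ neighbours, and is an $f$-fragment (or an $e$-fragment) strictly contained in $F$. That would contradict the minimal choice of $|F|$. The case distinction depends on whether $x'\in F$ or $x'\in S_e$, and on where $y$ lies relative to $S_f$.

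The main obstacle is this corner bookkeeping. We must check that the chosen corner is nonempty, that its complement side is nonempty, and, crucially, that its separator still separates the two ends of an edge of $C$ in the required way. The degree condition $d(v)\ge k+1$ on $C$ is what kills the degenerate corners, which are single vertices or empty sides. We would first try the corner $F\cap F'$ when $x'\in F$. When $x'\in S_e$, we would use the fact that $C$ must re-enter $F\cup S_e$ through $S_e$ after leaving along $e$. This yields a cycle edge joining $F$ to $S_e$, and we pick that edge as $f$ instead.
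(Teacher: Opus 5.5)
The paper gives no proof of this statement. It is quoted from Mader (1972) and used as a tool, so your outline has to stand on its own. The set-up is the standard one and is sound: a minimum $e$-fragment $F$ taken over all edges of $C$ and both orientations, $|F|\ge 2$, and $x'\in F\cup S_e$. But the outline stops exactly where the content of Mader's theorem lies. You say you ``aim to show'' that some corner is a smaller fragment, without saying which corner, under what hypothesis, or why it is again a fragment of a cycle edge; that is the proof itself, not bookkeeping.

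Concretely, submodularity gives $|N(F\cap F')|\le k$ only when $|N(F\cup F')|\ge k$. That needs the opposite corner $\overline{F}\cap\overline{F'}$ to be nonempty, where $\overline{X}=V\setminus(X\cup N(X))$, and nothing in your outline guarantees this. When it fails you must switch to the other side of the $f$-separation. When both opposite corners are empty (for $x'\in F$ this means $\overline{F}\subseteq S_f$), no crossing is available at all. Moreover, in several cases the contradiction is not a smaller fragment but a degree or adjacency contradiction, so ``find a smaller corner'' is not even the right target throughout.

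The idea that closes these cases, and which your proposal lacks, is to use the minimality of $|F|$ against the \emph{complementary} fragments. The set $B=V\setminus(F\cup S_e)$ is itself an $e$-fragment (it contains $y$), so $|\overline{F}|=|B|-1\ge |F|-1$. Likewise, both sides $P\ni x$ and $Q\ni x'$ of an $f$-separation have at least $|F|$ vertices. For instance, take $x'\in F$ with $\overline{F}\subseteq S_f$. Adding $|N(F\cap P)|\ge k$ and $|N(F\cap Q)|\ge k$ gives $|F\cap S_f|\ge |\overline{F}|-1\ge |F|-2$. This forces $F\cap P=\{x\}$ and then $d(x)=k$. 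The degree condition alone, as you invoke it, does not get there.

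It also helps to certify fragments in $G$ rather than in $G-e$. For a cycle edge $ab$, a set $X\ni a$ is an $ab$-fragment iff $|N(X)|=k$, $X\cup N(X)\neq V$, and $a$ is the only neighbour of $b$ in $X$. For example, $F\cap P$ is an $f$-fragment because $x$ is the only neighbour of $x'$ in $P$.

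When $x'\in S_e$, the same device together with the bounds $|Q|\ge |F|$ and $|\overline{F}|\ge |F|-1$ is what handles the remaining configurations.
\begin{itemize}
\item If $F\subseteq P$ and $\overline{F}\cap\overline{P}\neq\emptyset$, cross $F$ with $P\setminus\{x\}$, a fragment with boundary $S_f\cup\{x\}$. This would put $y$ into $N(F\setminus\{x\})$, which is impossible since $x$ is $y$'s only neighbour in $F$.
\item If $F\cap Q\neq\emptyset$ and the crossing identity applies to $F$ and $Q$, it would put $x$ into $N(F\cap Q)$. But $x$'s only neighbour in $Q$ is $x'\notin F$.
\end{itemize}
Finally, your case split for $x'$ is garbled. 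If $x'\in S_e$, then $xx'$ already joins $F$ to $S_e$, so the re-entry argument is only relevant when $x'\in F$. In fact $f=xx'$ works in both cases once the cases above are handled.
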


\begin{theorem}[Mader~\cite{Mader1972A}]\label{Count: Mader}
A minimally $k$-connected graph of order $n$ has at least $\frac{(k-1)n+2k}{2k-1}$ vertices of degree $k$.
\end{theorem}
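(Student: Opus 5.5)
We plan to prove the count by a degree-sum (double counting) argument built on Theorems~\ref{Minimumdegree: Halin} and~\ref{Forest: Mader}. Let $T=V_k(G)$, $t=|T|$ and $F=G-T$. If $F$ is empty, then $t=n$, and the inequality $(2k-1)n\ge (k-1)n+2k$ reduces to $kn\ge 2k$, which holds since $n\ge k+1\ge 2$. So assume $F\neq\emptyset$. By Theorem~\ref{Forest: Mader}, $F$ is a forest; let it have $c\ge 1$ components, so $e(F)=n-t-c$. Every vertex of $F$ has degree at least $k+1$, and every vertex of $T$ has degree exactly $k$, so $e_G[T,V(F)]\le kt-2e(G[T])$. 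Summing degrees over $V(F)$ gives
\[
(k+1)(n-t)+\sigma=\sum_{v\in V(F)}d_G(v)=2e(F)+e_G[T,V(F)]\le 2(n-t-c)+kt-2e(G[T]),
\]
where $\sigma=\sum_{v\in V(F)}(d_G(v)-k-1)\ge 0$ is the excess degree. Rearranging,
\[
(2k-1)t\ \ge\ (k-1)n+2c+2e(G[T])+\sigma .
\]
The crude bound $c\ge1$ already yields $t\ge((k-1)n+2)/(2k-1)$. The whole task is therefore to upgrade the additive term to $2k$, that is, to show
\[
2c+2e(G[T])+\sigma\ \ge\ 2k .
\]

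To find this slack we use $k$-connectivity together with the forest structure. Take a tree component $C$ of $F$ and a leaf $x$ of $C$; if $C$ is a single vertex, any vertex of $C$ will do. Then $x$ has at most one neighbour in $F$, so it has at least $k$ neighbours in $T$. More importantly, $N_G(V(C))\subseteq T$, and $N_G(V(C))$ separates $C$ from the rest of the graph unless $V(C)\cup N_G(V(C))=V(G)$. Hence either $|N_G(V(C))|\ge k$, or $C$ together with its neighbourhood is everything.

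The idea is to charge, for each component $C$, the contributions of $N_G(V(C))$ through the tight inequality $e_G[T,V(F)]\le kt-2e(G[T])$. This inequality is an equality only if $T$ is independent and every $T$-vertex sends all $k$ of its edges into $F$. In that extremal situation $G$ is bipartite between $T$ and the forest, apart from the edges of $F$. We would then count edges between $T$ and a single tree $C$ with $|V(C)|=m$: the tree has $m-1$ internal edges but needs degree sum at least $(k+1)m$. So it receives at least $(k-1)m+2$ edges from $T$, and each $T$-vertex contributes at most $k$ of them.

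The main obstacle is exactly this gap from $2$ to $2k$. If the charging does not close it, we would try minimality directly. For an edge $xy$ of $F$ (or an edge from $T$ to $F$), minimality gives a set $S$ with $|S|=k-1$ such that $G-xy-S$ is disconnected. Choosing such a separation with a smallest side (an ``end''), the standard Mader fragment argument shows that this end contains enough degree-$k$ vertices, and with enough edges, to contribute $2k$ rather than $2$ to the slack. Combining this end count with the displayed inequality then gives $t\ge \frac{(k-1)n+2k}{2k-1}$.
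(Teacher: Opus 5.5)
\textbf{There is a genuine gap: the reduction is correct, but the decisive inequality is never proved.} The paper does not prove this statement; it quotes it from Mader and uses it as a black box, so your proposal has to stand on its own. Your inequality $e_G[T,V(F)]\le kt-2e(G[T])$ is in fact an equality, because every vertex of $T$ has degree exactly $k$. So your display is the identity
\[
(2k-1)t=(k-1)n+2c+2e(G[T])+\sigma .
\]
This means the theorem is \emph{equivalent} to $2c+2e(G[T])+\sigma\ge 2k$. The reduction is an exact reformulation, and what you actually prove is only the trivial slack $2$ coming from $c\ge 1$.

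The charging idea cannot close this gap. It uses only three facts: the degree conditions, the forest structure of $G-T$, and $k$-connectivity (through $|N_G(V(C))|\ge k$). These three facts do not imply the bound. Take the crown graph $K_{k+1,k+1}$ minus a perfect matching, which is $k$-connected, and add one vertex joined to all of one colour class. The resulting graph $G$ has the following properties:
\begin{itemize}
\item it is $k$-connected with $\delta(G)=k$;
\item $T$ is the other colour class, and $G-T$ is a star;
\item $c=1$ and $e(G[T])=\sigma=0$, so the slack is $2$;
\item $t=k+1<\frac{(k-1)(2k+3)+2k}{2k-1}$.
\end{itemize}
For $k=2$, the graph $K_4-e$ is an even smaller example. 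These graphs fail the bound only because they are not minimally $k$-connected: every edge of the star is removable.

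\textbf{Where minimality has to enter.} Any edge incident with a vertex of $T$ is automatically non-removable, since its $T$-end would drop to degree $k-1$. So minimality carries information only about the edges of the forest $F$. If $F$ is edgeless, the inequality already follows from degrees alone: $c=|F|$, and if $|F|<k$ then every vertex of $T$ has at least $k-|F|$ neighbours in $T$, while $t\ge 2$. What is missing is therefore a lemma that turns the criticality of the edges of $F$ into the missing $2(k-c)$ units of slack when $c<k$. For each $xy\in E(F)$, criticality gives a $(k-1)$-set $S$ such that both $S\cup\{x\}$ and $S\cup\{y\}$ are $k$-separators; the lemma would have to exploit these separations.

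Your last paragraph only names such an argument ("the standard Mader fragment argument"), and as phrased it cannot be right:
\begin{itemize}
\item For an edge from $T$ to $F$, the smallest side of the separation is a single vertex of $T$ with no edges.
\item In the tight example $K_{k,k+1}$, all of the slack $2k$ comes from the $c=k$ isolated vertices of $F$, with $e(G[T])=\sigma=0$, not from degree-$k$ vertices and edges inside an end.
\end{itemize}
As it stands, the proposal establishes only $t\ge \frac{(k-1)n+2}{2k-1}$.
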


\begin{theorem}[Mader~\cite{Mader1972A}]\label{Size: Mader}
Let $G$ be a minimally $k$-connected graph of order $n$. If $n\ge 3k-2$, then $|E(G)|\le k(n-k)$. Moreover, if $n\ge 3k-1$, equality holds if and only if $G\cong K_{k,n-k}$.
\end{theorem}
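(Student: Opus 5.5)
The statement is Mader's edge bound (Theorem~\ref{Size: Mader}), a classical result quoted as a tool. I would prove it by induction on $n$ (or on $k$), combining the forest structure of Theorem~\ref{Forest: Mader} with a careful count of edges incident to the set $T=V_k(G)$ of degree-$k$ vertices.

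\textbf{Partition the edges.} Let $G$ be minimally $k$-connected of order $n\ge 3k-2$, and let $F=G-T$, which is a forest by Theorem~\ref{Forest: Mader}. Then
\[
|E(G)| = |E(G[T])| + |E_G[T,V\setminus T]| + |E(F)|,
\]
and $|E(F)|\le |V\setminus T|-1$ when $V\setminus T\ne\emptyset$. Each vertex of $T$ contributes exactly $k$ to the degree sum, so
\[
|E(G[T])| + |E_G[T,V\setminus T]| = k|T| - |E(G[T])|.
\]
Writing $t=|T|$, this gives
\[
|E(G)| \le kt - |E(G[T])| + (n-t-1).
\]
If $t\ge n-k$, the bound $|E(G)|\le kt+(n-t)$ is not immediately $\le k(n-k)$. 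So the count must be sharpened by showing that $T$ spans many edges when $t$ is large, or equivalently that vertices outside $T$ are few.

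\textbf{A cleaner route.} I would work with the degree sum $2|E|=\sum_v d(v)$. The key step is to prove that $\sum_{v\notin T}(d(v)-k)$ is controlled. Since $F$ is a forest, the vertices outside $T$ have most of their neighbours in $T$. Let $s=n-t$. Each edge of $F$ gives a degree-excess, and the edges from $V\setminus T$ into $T$ number at most $kt$. Hence
\[
|E|\le kt+(s-1)-|E(G[T])|.
\]
We want $|E|\le k(n-k)=k(t+s)-k^2$, that is, $|E(G[T])|\ge s-1-ks+k^2$. This holds trivially when $s\ge k$, since for $k\ge 1$ we have $s-1-ks+k^2\le k^2-(k-1)s-1\le k-1$… (to be checked). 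The remaining case $s<k$ is where $n\ge 3k-2$ must enter: then $t>n-k\ge 2k-2$ is large, and I would bound the edges via $2|E|=kt+\sum_{v\notin T}d(v)\le kt+s(n-1)$. This yields $|E|\le \tfrac12\bigl(k(n-s)+s(n-1)\bigr)$, and checking $\le k(n-k)$ for $s\le k$ and $n\ge 3k-2$ is a quadratic inequality in $s$ that is convex, so it suffices to verify the endpoints $s=0$ and $s=k$. At $s=0$ the bound is $kn/2\le k(n-k)$ iff $n\ge 2k$, which holds. At $s=k$ the bound is $\tfrac12\bigl(k(n-k)+k(n-1)\bigr)\le k(n-k)$ iff $n-1\le n-k$, which fails for $k>1$. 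So this crude bound breaks down near $s=k$, and there one needs the forest structure to show that vertices outside $T$ cannot all be adjacent to everything.

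\textbf{Main obstacle and equality case.} The hardest step is exactly this middle range $s\approx k$. There I would use a second induction: pick a vertex $v\notin T$ of minimal degree, or a leaf of $F$. A leaf $v$ of $F$ has at most one neighbour outside $T$, so it is adjacent to at least $d(v)-1\ge k$ vertices of $T$. Counting the $T$–$(V\setminus T)$ edges from the $T$ side, at most $k$ per $T$-vertex, forces $s\le$ something, and gives $|E|\le k(n-k)$ with equality only if every vertex of $T$ sends all $k$ edges to $V\setminus T$, $|V\setminus T|=k$, and $F$ has no edges. That is precisely $K_{k,n-k}$, and $n\ge 3k-1$ guarantees that $T$ is the larger side so the roles cannot swap. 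I expect the delicate counting in this boundary case, and tracking when $n=3k-2$ versus $3k-1$ matters for uniqueness, to be the real work. In the paper itself I would simply cite Mader~\cite{Mader1972A} for this statement.
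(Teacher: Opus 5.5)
The paper does not prove this statement. It is quoted from Mader and used as a black box, so your closing remark to cite it is exactly what the paper does. As a standalone proof, however, the proposal has a genuine gap, and it sits where the theorem has its content.

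Write $t=|T|$, $s=n-t$ and $e(H)=|E(H)|$. Your count is $|E(G)|=kt-e(G[T])+e(G-T)$. So $|E(G)|\le k(n-k)$ is equivalent to $e(G[T])-e(G-T)\ge k(k-s)$. With $e(G-T)\le s-1$ this is automatic for $s\ge k+1$. It is not automatic for $s\ge k$, as you claim: at $s=k$ it requires $e(G[T])\ge e(G-T)$, and nothing in your argument provides that.

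In fact no argument using only the forest structure plus degree counting can work. Take $K_{k,n-k}$ and add a spanning tree inside the $k$-side. Then $G-T$ is a tree, $s=k$, and there are $n-k$ vertices of degree $k$; for $n\ge 2k+1$ it satisfies the conclusions of Theorems~\ref{Forest: Mader} and~\ref{Count: Mader}. Yet for $k\ge 2$ it has $k(n-k)+k-1>k(n-k)$ edges. So edge-minimality itself must be used structurally: for every edge $uv$, $G-uv$ has a $(k-1)$-set separating $u$ from $v$, and one has to analyse the resulting fragments. The proposal never does this.

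The other steps do not close the case $s\le k$ either. Your degree-sum bound $|E(G)|\le\frac12\bigl(kn+s(n-k-1)\bigr)$ is linear in $s$, not quadratic. It only covers $s\le k(n-2k)/(n-k-1)$, which is below $k/2$ when $n=3k-2$. The leaf step does not help with the remaining window. A leaf of $G-T$ having at least $k$ neighbours in $T$ is compatible with every $s\le k$, and it bounds neither $s$ nor $e(G[T])$. ``Forces $s\le$ something'' stands in for the whole missing argument.

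The equality part has the same defect. Even where your count does work, at $s=k+1$, it is tight. It gives exactly $k(n-k)$ whenever $T$ is independent and $G-T$ is a spanning tree on $V\setminus T$. Excluding that configuration again needs minimality, not counting.

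The threshold $3k-1$ also has nothing to do with which side of $K_{k,n-k}$ is larger; the sides differ as soon as $n\ne 2k$. The real reason is that other extremal graphs exist at $n=3k-2$. For example, a hub joined to every vertex of a $6$-cycle is minimally $3$-connected, with $7=3k-2$ vertices and $12=k(n-k)$ edges, and it has $s=1$. So a uniqueness proof for $n\ge 3k-1$ must also rule out configurations with $s<k$. Your claimed equality structure ($s=k$, $T$ independent, $G-T$ edgeless) is asserted rather than derived.
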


\begin{theorem}[Hall~\cite{Hall1935}]\label{Matching: Hall}
A bipartite graph $G$ with parts $X$ and $Y$ has a matching that covers $X$ if and only if $|N_G(S)|\ge |S|$ for every $S\subseteq X$.
\end{theorem}


\section{$k$-removable $2$-matchings}\label{sec:two-matching}

In this section, we study $k$-removable $2$-matchings in $k$-connected graphs.

\begin{lemma} \label{1-R-1}
Every connected graph $G$ contains a $1$-removable $1$-matching, unless $G$ is a tree.
\end{lemma}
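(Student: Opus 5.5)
The statement reduces to the classical fact that an edge whose deletion keeps a connected graph connected is exactly an edge lying on a cycle. A $1$-removable $1$-matching is a single edge $e$ with $G-e$ connected, since one edge is trivially a matching. So I will prove: if the connected graph $G$ is not a tree, then it contains a non-bridge edge.

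First, because $G$ is connected and not a tree, it is not acyclic. Hence it contains a cycle $C$.

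Second, pick any edge $e=uv$ of $C$. Then $C-e$ is a $u$--$v$ path avoiding $e$.

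Third, I show that $G-e$ is connected. Take any two vertices $x,y$ and a path $P$ between them in $G$. If $P$ does not use $e$, it survives in $G-e$. If it does use $e$, replace $e$ by the $u$--$v$ path $C-e$. The result is an $x$--$y$ walk in $G-e$, and every walk contains a path between its endpoints. Thus $\{e\}$ is a $1$-removable $1$-matching.

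For completeness, the converse explains the exception: in a tree every edge is a bridge, so no such edge exists. The statement itself only requires the forward direction.

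There is no real obstacle here. The only point needing care is the walk-to-path reduction in the third step, which is standard.
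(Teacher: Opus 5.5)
Your proof is correct and is the paper's argument: a connected non-tree contains a cycle, and any edge of that cycle gives the required $1$-removable $1$-matching. The only difference is that you check explicitly that $G-e$ stays connected by rerouting through the path $C-e$, a step the paper leaves implicit.
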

\begin{pf}
If $G$ is not a tree, then $G$ contains a cycle $C$. Choose $e\in E(C)$. Then $\{e\}$ is our desired $1$-removable $1$-matching.
\end{pf}\hfill$\qed$

\begin{lemma}\label{1-R-2}
Every connected graph $G$ with $\delta(G)\ge 2$ contains a $1$-removable $2$-matching, unless $G$ is a cycle.
\end{lemma}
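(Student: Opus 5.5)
We need two disjoint edges $e,f$ with $G-\{e,f\}$ connected. Equivalently, we need a connected spanning subgraph of $G$ that misses two disjoint edges of $G$. The plan is to fix a spanning tree $T$ of $G$ and look at the non-tree edges $F=E(G)\setminus E(T)$. Removing any subset of $F$ keeps $T$, so the graph stays connected. Hence it suffices to find two disjoint edges in $F$, or otherwise to choose $T$ cleverly.

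\emph{Step 1: counting.} Since $\delta(G)\ge 2$, we have $|E(G)|\ge n$, so $|F|=|E(G)|-(n-1)\ge 1$. If $G$ is not a cycle, then some vertex has degree at least $3$, so $2|E(G)|\ge 2n+1$. This gives $|E(G)|\ge n+1$ and therefore $|F|\ge 2$.

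\emph{Step 2: the star case.} If two edges of $F$ are disjoint, we are done. Otherwise every two edges of $F$ share a vertex. Since $G$ is simple, the edges of $F$ then form either a star centred at some vertex $v$, or a triangle.

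\emph{Step 3: repairing $T$.} The freedom is in the choice of $T$.
\begin{itemize}
\item \textbf{Triangle case.} Let $F$ be the triangle $xyz$. Then $xy$ lies in a cycle $C$ formed by $xy$ together with the $T$-path $P$ from $x$ to $y$. Exchange some edge $g$ of $P$ with $xy$, choosing $g$ not incident to $z$; such a $g$ exists unless $P$ has length $2$ through $z$, which is impossible because $xz,yz\notin T$. The new non-tree edges $g$ and $yz$ (or $g$ and $xz$) are then disjoint. The remaining sub-case, where every edge of $P$ meets $z$ in a way that fails, is handled by trying the other triangle edge instead.
\item \textbf{Star case.} Let $F$ be a star centred at $v$, say $F=\{vu_1,\dots,vu_r\}$ with $r\ge 2$. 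Take the fundamental cycle $C_1$ of $vu_1$ with respect to $T$. If $C_1$ has a $T$-edge $g$ not incident to $v$ or $u_2$, swap: $T'=T-g+vu_1$. Then $g$ and $vu_2$ are both non-tree edges of $T'$ and are disjoint, which gives the required $2$-matching.
\end{itemize}

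\emph{Step 4: the degenerate case (main obstacle).} This is the configuration where every cycle through a star edge is very short, namely the fundamental cycle of $vu_1$ is a triangle $v u_1 u_2$ or similar. Here I would use the degree condition directly. Every vertex $w\ne v$ has all its $G$-edges except possibly one ($wv$) in $T$, so $d_T(w)\ge d_G(w)-1\ge 1$. Vertices $w\notin N_F(v)$ have $d_T(w)\ge 2$.

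A tree has at least two leaves, and every leaf $w$ of $T$ with $w\ne v$ must satisfy $wv\in F$. Root $T$ at $v$ and consider a leaf $u_1$ with $vu_1\in F$, whose $T$-path to $v$ has length at least $2$; otherwise $u_1$ is adjacent to $v$ twice, which is impossible in a simple graph. So the fundamental cycle of $vu_1$ has length at least $3$, and it contains the edge $u_1p$ (with $p$ the tree-neighbour of $u_1$), which is not incident to $v$.

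Pick $g=u_1p$ unless $p=u_2$; in that case use a different leaf, or pick the edge of the cycle next to $v$. To make this selection clean, I would choose $T$ as a depth-first search tree, so that all non-tree edges are back edges. This should make the case analysis short, and it is where most of the care goes: excluding the situation in which all candidate edges meet $\{v,u_2\}$ forces $G$ to be a cycle.
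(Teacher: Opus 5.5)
\paragraph{Verdict.} Steps 1--3 are sound. In the triangle case a suitable $g$ always exists, so the vague ``remaining sub-case'' never arises. The gap is Step 4: that is where the proof must be finished, you leave it unresolved, and parts of what you wrote there are false.

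\paragraph{What goes wrong in Step 4.} The degenerate configuration is not a triangle $vu_1u_2$, since that would need $vu_2\in E(T)$. Suppose every $T$-edge of the fundamental cycle of $vu_1$ meets $\{v,u_2\}$. Then the first edge $u_1p$ of the $T$-path from $u_1$ to $v$ forces $p=u_2$. Since $u_2v\notin E(T)$, the path is $u_1u_2av$.

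Both of your remedies fail on the smallest example. Let $G=K_4^-$ on $\{u_1,u_2,a,v\}$ with edges $u_1u_2,u_2a,av,vu_1,vu_2$, and let $T$ be the path $u_1u_2av$. This $T$ is a depth-first search tree from $v$, so choosing a DFS tree does not rule the configuration out. Here $F=\{vu_1,vu_2\}$, and:
\begin{itemize}
\item the only leaf of $T$ other than $v$ is $u_1$, and its tree-neighbour is $u_2$, so there is no ``different leaf'';
\item the cycle edge next to $v$ is $av$, which meets $vu_2$.
\end{itemize}
Yet $G$ is not a cycle. So the situation in which all candidate edges meet $\{v,u_2\}$ does occur in non-cycles, and your closing assertion cannot be proved as stated.

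\paragraph{The missing idea.} Use the freedom in which star edge you exchange and which one you keep. Take any $u_1\in N_F(v)$ and let $u_1a_1$ be the first edge of the $T$-path from $u_1$ to $v$; note $a_1\neq v$ because $vu_1\notin E(T)$.
\begin{itemize}
\item If some $u_j$ with $j\neq 1$ differs from $a_1$, then $T-u_1a_1+vu_1$ is a spanning tree. Its co-tree contains the disjoint edges $u_1a_1$ and $vu_j$.
\item Otherwise $r=2$ and $a_1=u_2$. The $T$-path from $u_2$ to $v$ is then the tail $u_2a_2\cdots v$ of the previous path. So $a_2\neq u_1$ (vertices of a path are distinct) and $a_2\neq v$ (as $vu_2\notin E(T)$). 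Now $T-u_2a_2+vu_2$ leaves the disjoint co-tree edges $u_2a_2$ and $vu_1$.
\end{itemize}
No leaves, DFS structure, or degree argument is needed.

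\paragraph{Comparison with the paper.} With this repair, your spanning-tree exchange argument is a valid route genuinely different from the paper's. The paper argues by contradiction that $G$ has no two vertex-disjoint cycles and hence no bridge. It then picks one edge from each of two intersecting cycles, with a separate $K_4^-$ case. Your route is shorter. It also shows that $\delta(G)\ge 2$ is used only to guarantee $|E(G)|\ge |V(G)|+1$.
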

\begin{pf}
Let $G$ be a connected graph with $\delta(G)\ge 2$ such that removing any $2$-matching $M$ disconnects $G$. We first show that $G$ has no vertex-disjoint cycles. Suppose otherwise, and let $C_1$ and $C_2$ be vertex-disjoint cycles in $G$. For each $i\in\{1,2\}$, choose an edge $e_i\in E(C_i)$. Since $G-e_1-e_2$ remains connected, $\{e_1,e_2\}$ forms a $1$-removable $2$-matching, a contradiction.

If $G$ has a bridge $e$, then each component of $G-e$ must contain a cycle since $\delta(G)\ge 2$, contradicting the absence of vertex-disjoint cycles. Hence, $G$ is $2$-edge-connected. If $G$ is not a single cycle, it contains two cycles $C_1$ and $C_2$ sharing at least one vertex. Select an edge $e_1\in E(C_1)\setminus E(C_2)$. If $C_2$ has an edge $e_2$ not adjacent with $e_1$, then $\{e_1,e_2\}$ forms the desired matching. Otherwise, $K_4^-\subseteq C_1\cup C_2$, where $K_4^-$ denotes the graph obtained from $K_4$ by deleting one edge. In this case, there exists a $2$-matching $M\subseteq E(C_1\cup C_2)$ such that $G-M$ remains connected.
\end{pf}\hfill$\qed$

\begin{lemma}\label{2-R-2} Every $2$-connected graph $G$ with $\delta(G)\ge 3$ contains a $2$-removable $2$-matching.
\end{lemma}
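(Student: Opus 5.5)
We take a minimal counterexample approach via minimally $2$-connected subgraphs. Let $G$ be $2$-connected with $\delta(G)\ge 3$. Choose a spanning subgraph $H\subseteq G$ that is minimally $2$-connected. Put $F=E(G)\setminus E(H)$. Every edge of $F$ is $2$-removable, and more strongly $G-M$ is $2$-connected for every $M\subseteq F$, since $G-M\supseteq H$. So it suffices to find two independent edges in $F$, or else to handle the case where $F$ is a star or a triangle. Note that $F\neq\emptyset$, because by Theorem~\ref{Minimumdegree: Halin} $H$ has a vertex of degree $2$, while $\delta(G)\ge 3$.

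The first step is to count. Every vertex in $V_2(H)$ has degree at least $3$ in $G$, so it is incident with an edge of $F$. By Theorem~\ref{Count: Mader} with $k=2$, $|V_2(H)|\ge (n+4)/3$. If $F$ contains no $2$-matching, then $F$ is a star or a triangle. A triangle covers only $3$ vertices, so $|V_2(H)|\le 3$ and hence $n\le 5$. A star with center $c$ covers the vertices of $V_2(H)$ only as leaves, and possibly $c$ itself. In that case every vertex of $V_2(H)\setminus\{c\}$ is adjacent to $c$ in $G$, and all other vertices already have degree at least $3$ in $H$. Small orders ($n\le 5$) will be checked directly; there $G$ is $K_4$, $K_5$, $K_5^-$ or similar, and explicit $2$-matchings work, e.g.\ deleting a perfect matching of $K_4$ leaves $C_4$.

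The remaining case, and the main obstacle, is when $F$ is a star centered at $c$. Here we exploit the freedom in choosing $H$ instead of fixing it. Take a leaf $u$ of the star with $d_H(u)=2$, so $cu\in F$, and let $H'=H+cu$. Then $H'$ is $2$-connected but no longer minimal. Any edge $e$ of $H'$ with $H'-e$ still $2$-connected gives another minimally $2$-connected spanning subgraph $H''\subseteq H'-e$, and $e$ then lies in the new complementary set $F''\supseteq (F\setminus\{cu\})\cup\{e\}$. If $e$ can be chosen non-incident with $c$, then $e$ together with another star edge $cw$ (with $w\neq$ endpoints of $e$) forms the desired $2$-matching. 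Such a $w$ exists since the star has at least $|V_2(H)|-1\ge 2$ leaves once $n\ge 6$.

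The final task is therefore to show that $H+cu$ has a $2$-removable edge not incident with $c$. We plan to apply Theorem~\ref{Forest: Mader} and the structure of $H$. Adding $cu$ creates a cycle through $u$ in which the edges of the $H$-ear containing $u$ become removable; in a minimally $2$-connected graph $u$ lies on a path of degree-$2$ vertices (an ear), and deleting an ear edge away from $c$ while keeping $cu$ preserves $2$-connectivity. The delicate subcase is when every such ear edge is incident with $c$, for instance when $u$'s ear has length $2$ through $c$. There we instead pick a different leaf $u$: there are at least $(n+4)/3-1$ of them. Degree counting, using $|E(H)|\le 2n-4$ from Theorem~\ref{Size:Dirac}, should rule out all leaves being in this bad configuration, since that would force too many edges at $c$, i.e.\ $H$ would be essentially $K_{2,n-2}$-like. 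In that last case $G$ is $K_{2,n-2}$ plus edges from one side vertex $c$ to the other, which is impossible given $\delta\ge 3$ unless $n$ is small, and the small cases are checked by hand.
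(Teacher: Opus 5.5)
Your reduction holds up to the star case. With $H$ a minimally $2$-connected spanning subgraph and $F=E(G)\setminus E(H)$ a star centred at $c$, it does suffice to find a leaf $u$ and an edge $e$ of $H+cu$, not incident with $c$, such that $H+cu-e$ is $2$-connected. That last step is the whole difficulty, and what you give for it does not work.

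\textbf{The ear-edge claim is false.} You assert that deleting an edge of the ear through $u$, away from $c$, keeps $H+cu$ $2$-connected.
\begin{itemize}
\item Any ear edge with an endpoint in $V_2(H)\setminus\{u\}$ leaves a vertex of degree $1$. So only an edge $ua$, with $a$ an end of the maximal suspended path $P$ through $u$, can qualify.
\item Even then, $H+cu-ua$ is $H-P^{\circ}$ (with $P^{\circ}$ the set of interior vertices of $P$) plus an ear from the other end $b$ to $c$. This is $2$-connected only if $c$ lies in the correct end block of $H-P^{\circ}$.
\item Example: let $H$ be the $4$-cycles $apwqa$ and $brwsb$ glued at $w$, plus the path $aub$, and let $c=p$. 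Then $G=H+\{pq,pr,ps,pu\}$ meets all your hypotheses, yet $H+pu-ub$ has the cut vertex $w$.
\item Example: in a large wheel with $H$ equal to the rim plus two spokes, a rim vertex $u$ whose rim neighbours have degree $2$ in $H$ admits no suitable edge at all.
\end{itemize}

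\textbf{The ``delicate subcase'' cannot occur.} Every $v\in V_2(H)\setminus\{c\}$ needs an $F$-edge, and that edge must be $cv$. Hence $c$ has no $H$-neighbour in $V_2(H)\setminus\{c\}$, so $c$ is never next to a leaf on its ear. The real obstruction is the block structure of $H-P^{\circ}$ relative to $c$. Counting edges via $|E(H)|\le 2n-4$ does not control this, and the appeal to ``$K_{2,n-2}$-like'' graphs is not an argument.

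\textbf{The missing idea} is to take $u$ on the last ear of a well-chosen ear decomposition, as the paper does.
\begin{itemize}
\item $H$ is not a cycle, since otherwise the two $H$-neighbours of $c$ would be degree-$2$ vertices with no $F$-edge.
\item So start an ear decomposition of $H$ from a cycle through $c$. Its last ear $P=a\cdots b$ has an interior vertex by minimality, and $H-P^{\circ}$ is $2$-connected.
\item Also $c\notin P^{\circ}$, and $c\notin\{a,b\}$ by the observation above.
\item Let $u$ be the interior vertex of $P$ adjacent to $a$. Then $H+cu-ua$ is $H-P^{\circ}$ plus an ear between the distinct vertices $b$ and $c$, hence $2$-connected.
\item Pick $w\in V_2(H)\setminus\{c,u\}$, which exists since $|V_2(H)|\ge 3$. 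Then $\{ua,cw\}$ is a $2$-removable $2$-matching.
\end{itemize}

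With this step supplied, your route is a valid alternative to the paper's. It replaces Theorem~\ref{thm:ckl} by Mader's count. The fact that $c$ has no degree-$2$ $H$-neighbours plays the role of the paper's claim that $N_G(v)$ is independent.
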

\begin{pf}
For a contradiction, suppose that $G-M$ is not $2$-connected for any $2$-matching $M$ in $G$. By Theorem \ref{thm:ckl}, there exists a vertex $v\in V(G)$ such that $G-v$ is $2$-connected. Let $H=G-v$. If $H$ contains an edge $e\in E(H)$ such that $H-e$ is $2$-connected, then there exists an edge $e'\in E(G)$ such that $v\in V(e')$ and $V(e)\cap V(e')=\emptyset$ because $\delta(G)\ge 3$, meaning that $\{e,e'\}$ forms our desired matching. Hence, we may assume that $H$ is a minimally $2$-connected graph. Let $X\subseteq V(H)$ be the set of vertices of degree $2$ in $H$. Since $\delta(G)\ge 3$, this forces $X\subseteq N_G(v)$ and $X\neq\emptyset$.

\begin{claim}\label{noedge}
The subgraph $G[N_G(v)]$ has no edges.
\end{claim}
Suppose that $G[N_G(v)]$ contains an edge $e=xy$. Then we obtain a $2$-connected spanning subgraph of $G$ by adding the edges $vx$ and $vy$ to $H-xy$ and $v$. Since $\delta(G)\ge 3$, there exists a vertex $z\in  V(H)$ such that $vz\in E(G)$ and $z\not\in\{x, y\}$. Consequently, $\{xy, vz\}$ is our desired matching in $G$, a contradiction. This completes the proof of Claim \ref{noedge}.

If $H$ is a cycle, then $G$ is the join of $H$ and $v$, which implies $G[N_G(v)]$ is a cycle,  contradicting Claim \ref{noedge}. Thus $H$ is not a cycle.

Let $F$ denote the last ear in an ear decomposition of $H$. Since $H$ is minimally $2$-connected, $F$ has at least three vertices, and both ends of $F$ have degree at least three in $H$. Since $X\subseteq N_G(v)$, by Claim~\ref{noedge}, $F$ has exactly three vertices. Write $F=xuy$.
Since $\delta(G)\ge 3$,  by Claim~\ref{noedge}, there exists $w,z\in N_G(v)$ such that $\{w,z\}\cap \{u,x,y\}=\emptyset$. This implies that we can construct a $2$-connected spanning subgraph of $G$ by adding the edges $vz$ and $vu$ to $H-ux$ and $v$. Consequently, $\{vw, ux\}$ is our desired matching in $G$.
This is a contradiction.
\end{pf}\hfill$\qed$

\begin{lemma}\label{k-R-2} For $k\ge 2$, every $k$-connected graph $G$ with $\delta(G)\ge k+1$ contains a $k$-removable $2$-matching.
\end{lemma}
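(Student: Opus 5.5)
Lemma~\ref{2-R-2} already settles $k=2$, so we assume $k\ge 3$ and argue by contradiction: $G$ is $k$-connected with $\delta(G)\ge k+1$, and no $2$-matching $M$ leaves $G-M$ $k$-connected. The plan is to pass to a minimally $k$-connected spanning subgraph and use the structure results of Mader and Halin.

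\emph{Setup.} By Theorem~\ref{thm:halin} there is an edge $e_1=ab$ with $G-e_1$ $k$-connected. Delete edges from $G-e_1$ one by one while $k$-connectivity is kept, until we reach a minimally $k$-connected spanning subgraph $H\subseteq G-e_1$. Let $R=E(G)\setminus E(H)$, so $e_1\in R$ and every subset of $R$ is $k$-removable. If $R$ contains two disjoint edges we are done. So we may assume $R$ is a star or a triangle.

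\emph{Excluding the triangle.} If $R$ is a triangle on $\{a,b,c\}$, then every vertex outside $\{a,b,c\}$ has the same degree in $G$ as in $H$. By Theorem~\ref{Count: Mader}, $H$ has many vertices of degree $k$. There are at least $\frac{(k-1)n+2k}{2k-1}>3$ of them, and for $k\ge 3$ this exceeds $3$ since $n\ge k+2$. Some such vertex lies outside $\{a,b,c\}$ and has degree $k$ in $G$, contradicting $\delta(G)\ge k+1$.

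\emph{The star case.} Now $R$ is a star centred at $a$, with leaves $b_1,\dots,b_t$. Every vertex of degree $k$ in $H$ must receive an $R$-edge, so $V_k(H)\subseteq\{a,b_1,\dots,b_t\}$. The $b_i$ are pairwise non-adjacent in $H$: otherwise an edge $b_ib_j$ together with a star edge $ab_l$ ($l\ne i,j$, which needs $t\ge3$) would be a candidate, as in Claim~\ref{noedge}. To make this candidate work, we exchange. In $H+ab_i+ab_j-b_ib_j$, any separator of size $k-1$ would have to separate $b_i$ from $b_j$, and the path $b_iab_j$ prevents this unless the separator contains $a$. We handle that subcase by choosing which two star edges to add back.

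The counting itself comes from Theorem~\ref{Forest: Mader}: $H-V_k(H)$ is a forest. Theorem~\ref{Count: Mader} gives $|V_k(H)|\ge\frac{(k-1)n+2k}{2k-1}$. Since $V_k(H)\subseteq\{a,b_1,\dots,b_t\}$, we get $t\ge |V_k(H)|-1$.

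Then we exploit an edge $xy$ of $H$ with $x,y\notin\{a,b_1,\dots,b_t\}$. Such an edge exists because the vertices outside the star have degree at least $k+1\ge4$ in $H$, and $n$ is large relative to the leaves unless $H$ is nearly $K_{k,n-k}$. We try to show that $H-xy+ab_i$ is $k$-connected for some $i$, so that $\{xy,ab_j\}$ is $k$-removable for a suitable $j\ne i$. A $(k-1)$-separator $S$ of $H-xy$ splits $x$ from $y$. Each such side contains a degree-$k$ vertex of $H$ by standard end/atom arguments for minimally $k$-connected graphs, and hence contains a leaf $b_i$ or $a$. Adding an edge $ab_i$ that crosses this separation restores connectivity.

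\emph{Main obstacle.} The hardest step is the star case. We must guarantee one star edge that crosses every $(k-1)$-separator of $H-xy$ simultaneously, while a second star edge still avoids $x$ and $y$. The first thing to try is to take $xy$ inside the forest $H-V_k(H)$. Fragments of minimally $k$-connected graphs containing a degree-$k$ vertex are nested, and we would use this to argue that a single leaf $b_i$ lies in the smallest fragment on one side. If that fails, we would fall back on the extremal case $H\cong K_{k,n-k}$ from Theorem~\ref{Size: Mader}. There the star lies inside the small side, and two disjoint edges of $R$ can be found directly.
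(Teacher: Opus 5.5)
Your reduction to a minimally $k$-connected spanning subgraph $H$ with $R=E(G)\setminus E(H)$, and your exclusion of the triangle via Mader's counting theorem, are fine. The star case, however, is the entire content of the lemma, and it is not proved; your ``main obstacle'' paragraph only lists things to try. Several of its steps actually fail.

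First, an edge $xy\in E(H)$ with $x,y\notin\{a,b_1,\dots,b_t\}$ need not exist. Let $H=K_{k,m}$ with parts $C=\{c_1,\dots,c_k\}$ and $A=\{a,b_1,\dots,b_{m-1}\}$, where $m\ge k+1$, and let $G=H+\{ab_1,\dots,ab_{m-1}\}$. Then $G$ is $k$-connected with $\delta(G)=k+1$, $H$ is minimally $k$-connected, and $R$ is a star that your greedy setup can produce. The vertices outside the star form the independent set $C$, so there is no such $xy$.

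Second, even when $xy$ exists, you give no reason why a single star edge should cross every $(k-1)$-separator of $H-xy$ simultaneously. If $a$ lies in such a separator, no star edge crosses it at all. The ``nesting of fragments'' you appeal to is asserted, not proved.

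Third, the fallback is wrong. Mader's size theorem characterises only the equality case $|E(H)|=k(n-k)$ and says nothing about graphs that are ``nearly'' $K_{k,n-k}$. In the example above the star lies in the \emph{large} side, since it must meet every degree-$k$ vertex of $H$. Moreover, $R$, being a star, has no two disjoint edges by hypothesis. The $k$-removable $2$-matching in that example is $\{ab_1,b_2c_1\}$, which uses an edge of $H$ incident to the star, so your strategy cannot see it. The sub-claim that the $b_i$ are pairwise non-adjacent is also left unfinished.

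What is missing is the paper's induction on $k$ through the star centre. Since $G-a$ is $(k-1)$-connected with $\delta(G-a)\ge k$, the induction hypothesis (base case $k=2$) gives a $2$-matching $\{e_1,e_2\}$ of $G-a$ such that $G-a-e_1-e_2$ is $(k-1)$-connected. Suppose $Q$ is a $(k-1)$-separator of $G-e_1-e_2$; it cannot contain $a$.
\begin{itemize}
\item So $N_G[a]\setminus Q$ lies in one component, and some other component $D$ lies in $Z=V(G)\setminus N_G[a]$.
\item As $V_k(H)\subseteq N_G[a]$ and every $R$-edge meets $a$, $G[Z]=H[Z]$ is a forest by Mader's forest theorem. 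Hence $D$ is a tree.
\item Any leaf of $D$ not incident with $e_1$ or $e_2$ would have degree at most $k$. This forces $D$ to be a path $u_1\dots u_\ell$ with $\ell\ge 2$, all of whose vertices have degree exactly $k+1$ and are adjacent to all of $Q$.
\item $G-u_1u_2$ is not $k$-connected. Otherwise a maximal removable set containing $u_1u_2$ would be a star centred at a vertex of degree $k+1$, hence a single edge, which contradicts Mader's counting theorem.
\item Its $(k-1)$-separator must be $Q$, since $Q\subseteq N(u_1)\cap N(u_2)$.
\item The component of $G-u_1u_2-Q$ avoiding $a$ is again a tree with at least two vertices. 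It has a leaf other than $u_1,u_2$, and that leaf has degree at most $k$, a contradiction.
\end{itemize}
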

\begin{pf} We prove the lemma by induction on $k$. The case $k=2$  follows from Lemma \ref{2-R-2}. For $k\ge 3$, by contradiction, suppose that there exists no such matching $M$ in a $k$-connected graph $G$ of order $n$ with minimum degree at least $k+1$. Let $F$ be a maximal edge set such that $G-F$ is still $k$-connected. By the maximality of $F$, $G-F$ is minimally $k$-connected.
If $F$ contains two independent edges, then these two edges form our desired matching. Thus, $G[F]$ is a star or a triangle.

Suppose there is a triangle $T$ such that $G-E(T)$ is minimally $k$-connected. Since $\delta(G)\ge k+1$, each vertex with degree $k$ in $G-E(T)$ must belong to $V(T)$. By Theorem~\ref{Count: Mader}, $3 = |V(T)| \ge \frac{(k-1)n + 2k}{2k-1}$, which gives $n \le 4$. Thus, $n = 4$ and $k = 2$, so $G$ is $K_4$. However, this contradicts the existence of a triangle $T$ in $G$ such that $G - E(T)$ remains $2$-connected.

Thus, we can assume that $G[F]$ is a star for each maximal removable edge set $F$.  Choose $F$ to be minimal with this property. Let $v$ be the center of the star $G[F]$, and let $S=V(F)\setminus\{v\}$. Then $F=E_G[v,S]$, since $G[F]$ is a star with center $v$.

Let $Z=V(G)\setminus N_G[v]$. We prove that $G[Z]$ is a forest. Since $G-F$ is minimally $k$-connected and every vertex with degree $k$ in $G-F$ lies in $N_G[v]$, Theorem \ref{Forest: Mader} implies that $G-N_G[v]=G-F-N_G[v]$ is a forest.

Since $G-v$ is a $(k-1)$-connected graph with $\delta(G-v)\ge (k-1)+1$, the induction hypothesis yields edges $e_1, e_2\in E(G-v)$ such that $G-v-e_1-e_2$ is $(k-1)$-connected. We will show that $e_1$ and $e_2$ are the desired edges, thereby obtaining a contradiction. To see this, suppose that $G-e_1-e_2$ is not $k$-connected. Then $G-e_1-e_2$ has a $(k-1)$-separating set $Q$. If $v\in Q$, then $Q-v$ forms a $(k-2)$-separating set of $G-v-e_1-e_2$, a contradiction. Thus, $v\not\in Q$ and this implies $Z\neq\emptyset$ and there exists a component $C$ of $G-e_1-e_2-Q$ such that $v\not\in V(C)$. For the component $C$, the following claim holds.

\begin{claim}\label{componentC}
(1) $V(C)\subseteq Z$.

(2) $C$ is a path with at least two vertices, write $C=u_1u_2\ldots u_{\ell}$.

(3) $u_1, u_{\ell}\in V(e_1)\cup V(e_2)$; moreover, if $u_1\in V(e_1)$, then $u_{\ell}\in V(e_2)$.

(4) $d_G(u_i)=k+1$ for all $1\le i\le \ell$.

(5) $N_G(u_i)\cap (V(G)\setminus V(C))=Q$ for $2\le i\le \ell-1$, and $N_G(u_i)\cap (V(G)\setminus V(C))\subseteq Q\cup V(e_i)$ for $i\in\{1, \ell\}$.
\end{claim}

Since $e_1, e_2\in E(G-v)$, the vertex set $N_G[v]\setminus Q$ belongs to a component of $G-e_1-e_2-Q$. As $v\notin V(C)$, we have that $V(C)\subseteq Z$. Thus (1) holds.

Since $G[Z]$ is a forest, $G[V(C)]$ is a tree by (1). Thus $e_1,e_2\not\in E(G[V(C)])$. If $C$ is not a path, then there exists a leaf $u$ such that $u\notin V(e_1)\cup V(e_2)$. Thus, $d_G(u)\le |Q|+1=k-1+1=k$, a contradiction. So $C$ is a path. If $C$ is trivial, then its unique vertex has degree at most $k$, a contradiction. Therefore, $C$ is a path with at least two vertices. Thus, (2) holds. Considering the degree of $u_i$, we obtain (3)-(5) hold. Thus, claim \ref{componentC} holds.

By Theorem \ref{Count: Mader}, $|V(F)|\ge\frac{(k-1)n+2k}{2k-1}>2$ since $n\ge \delta(G)+1\ge k+2\ge 5$. Thus, we have that $|S|\ge 2$.

Together with the minimality of $F$, this implies that no vertex of $C$ can be chosen as $v$. Hence, for any $e\in E(C)$, $G-e$ is not $k$-connected. Therefore, $G-u_1u_2$ has a $(k-1)$-separating set $U$. By the construction of $C$, this forces $U=Q$. For each $i\in\{1,2\}$, let $C_i$ be a component of $G-u_1u_2-U$ containing $u_i$. Since $\delta(G)\ge k+1$, it follows that $V(C_i)\setminus\{u_i\}\neq\emptyset$ for each $i\in\{1, 2\}$. Since $v\notin U$, there exists $i\in \{1, 2\}$ such that all vertices of $C_i$ have degree at least $k+1$ in $G-F$, and by Theorem \ref{Forest: Mader}, $C_i$ is a tree. Since $|U|=|Q|\le k-1$, we can find a leaf $z$ in $C_i$ such that $d_G(z)<k+1$, a contradiction.
\end{pf}\hfill$\qed$

Combining Lemmas \ref{1-R-2} and \ref{k-R-2}, we obtain Theorem \ref{thm:two-matching}.

\section{$k$-removable $\lceil\frac{\delta+1}{2}\rceil$-matchings for $k\in\{1,2,3\}$}\label{sec:half-delta}

In this section, we further investigate larger $k$-removable matchings in $k$-connected graphs with minimum degree at least $k+1$, for $k\in\{1,2,3\}$.

\begin{lemma}\label{2-R-3}
Every $2$-connected graph $G$ with $\delta(G)=4$ contains a $2$-removable $3$-matching, unless $G\cong K_5$.
\end{lemma}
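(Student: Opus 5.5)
We would prove this by contradiction, mirroring the structure of the proof of Lemma~\ref{2-R-2}. Suppose $G$ is $2$-connected with $\delta(G)=4$, $G\not\cong K_5$, and $G-M$ fails to be $2$-connected for every $3$-matching $M$.

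\textbf{Step 1: reduce to a minimally $2$-connected $G-v$.} Since $\delta(G)=4\ge\lfloor 3\cdot 2/2\rfloor$, Theorem~\ref{thm:ckl} gives a vertex $v$ with $H=G-v$ $2$-connected, and $\delta(H)\ge 3$. By Lemma~\ref{2-R-2}, $H$ has a $2$-removable $2$-matching $\{e_1,e_2\}$. Since $d_G(v)\ge 4$ and $e_1,e_2$ cover only four vertices, we would first try to find a neighbour $z$ of $v$ outside $V(e_1)\cup V(e_2)$. Then $\{e_1,e_2,vz\}$ is a $3$-matching, and $G-\{e_1,e_2,vz\}$ is $2$-connected: it is $H-e_1-e_2$ plus $v$ joined to at least $3\ge 2$ vertices. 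So we may assume that $N_G(v)=V(e_1)\cup V(e_2)$ and $d_G(v)=4$ for every $2$-removable $2$-matching of $H$, and for every choice of $v$ with $G-v$ $2$-connected. This is very restrictive.

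\textbf{Step 2: exploit the degree-$4$ neighbourhood.} If some edge $xy\in E(G[N_G(v)])$ exists, we would reroute as in Claim~\ref{noedge}. We take a $2$-matching in $H$ avoiding $xy$, or we replace $e_1=xy$ by the path $x v y$ (so $vx,vy$ remain) and delete $xy$ together with $e_2$ and an edge $vw$ with $w\notin\{x,y\}\cup V(e_2)$. Since $N_G(v)=V(e_1)\cup V(e_2)$, such a $w$ fails to exist only when $N_G(v)=\{x,y\}\cup V(e_2)$, so we would instead swap roles and use another $2$-removable edge of $H-e_1-e_2$, if one exists.

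\textbf{Step 3: the minimal case.} Otherwise $H-e_1-e_2$ is minimally $2$-connected. Its degree-$2$ vertices then lie in $V(e_1)\cup V(e_2)\subseteq N_G(v)$, because vertices outside have degree at least $4$ in $G$ and lose at most the edge to $v$. We would then apply Theorem~\ref{Count: Mader} with $k=2$: a minimally $2$-connected graph on $n-1$ vertices has at least $(n-1+4)/3$ vertices of degree $2$. Moreover, vertices of $H$ not adjacent to $v$ must have degree at least $4$ in $H-e_1-e_2$. Combining this with Theorem~\ref{Size:Dirac} ($|E|\le 2(n-1)-4$) and the edge count $|E(G)|\ge 2n$, we get
\[
2n-4-2 \;\le\; |E(H)|-2 \;=\; |E(H-e_1-e_2)| \;\le\; 2n-6,
\]
with equality forced throughout. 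This should bound $n$ severely, say $n\le 6$ or $7$, and make $G$ nearly $4$-regular with a rigid structure.

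\textbf{Step 4: small cases.} Finally we would handle the remaining small graphs with $\delta=4$ and $n\le 7$ by hand, exhibiting explicit $3$-matchings, for instance in $K_6$, $K_{4,4}$-like graphs, the octahedron, or $K_7$ minus a matching. We would then check that only $K_5$ fails: there $n=5$ admits at most a $2$-matching, so it is a genuine exception.

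The main obstacle is Step 3, which requires getting the counting tight enough, together with the bookkeeping in Step 2 when $N_G(v)=V(e_1)\cup V(e_2)$ for every choice of the matching. We would first try to choose $v$ and $\{e_1,e_2\}$ extremally, for example maximizing the number of $2$-removable edges remaining, to force the contradiction quickly.
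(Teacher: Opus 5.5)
Your Step~1 is sound. It gives a strong normalization: $d_G(v)=4$ and $N_G(v)=V(e_1)\cup V(e_2)$ for every $2$-removable $2$-matching $\{e_1,e_2\}$ of $H=G-v$. The gap comes right after.

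\begin{itemize}
\item In Step~2 the rerouting of Claim~\ref{noedge} never applies. With $xy=e_1$ you need a neighbour $w$ of $v$ outside $\{x,y\}\cup V(e_2)=N_G(v)$, and by Step~1 no such $w$ exists. So Step~2 reduces to the unexplained ``use another removable edge of $H-e_1-e_2$''.
\item Write $e_1=ab$, $e_2=cd$, $H_0=H-e_1-e_2$, and let $f$ be an edge with $H_0-f$ $2$-connected. If $f$ avoids $\{a,b,c,d\}$, then $\{ab,cd,f\}$ works. If $f=aw$ with $w\notin\{a,b,c,d\}$, then $\{aw,cd\}$ is a $2$-removable $2$-matching of $H$ that misses $b$, so $\{aw,cd,vb\}$ works.
\item For $f\in\{ac,ad,bc,bd\}$, however, your proposal yields no $3$-matching, and nothing rules such $f$ out. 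So the premise of Step~3, that $H-e_1-e_2$ is minimally $2$-connected, is unjustified.
\item Step~3 stops at ``equality forced'' and guesses $n\le 6$ or $7$ without derivation. Theorem~\ref{Count: Mader} only gives $|V(H)|\le 8$. The enumeration of Step~4 is never carried out, so the decisive conclusion is missing.
\end{itemize}

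The missing ingredient is to pass to a maximal $F$ with $H_0-F$ $2$-connected and then use the \emph{equality case} of Theorem~\ref{Size: Mader}. By the above, every $f\in F$ joins $\{a,b\}$ to $\{c,d\}$. So in the minimally $2$-connected graph $H_0-F$ on $h=|V(H)|$ vertices, the vertices outside $N_G(v)$ keep degree at least $4$, while $a,b,c,d$ have degree at least $2$. Hence $|E(H_0-F)|\ge 2h-4$, and equality holds by Dirac's bound.

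\begin{itemize}
\item For $h\ge 5$ this gives $H_0-F\cong K_{2,h-2}$. Its degree sequence must consist of exactly four $2$'s (namely $a,b,c,d$) and all other degrees equal to $4$. This forces $h=6$, with $\{a,b,c,d\}$ as the larger side.
\item The case $h=4$ forces $H=K_4$ and hence $G=K_5$.
\item The case $h=5$ fails, since $K_{2,3}$ has no vertex of degree $4$.
\end{itemize}

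In the case $h=6$, $G$ contains $K_{3,4}$ with sides $\{p,q,v\}$ and $\{a,b,c,d\}$, together with $ab$ and $cd$. Then $\{ab,pc,qd\}$ is a $2$-removable $3$-matching, since $K_{3,4}-pc-qd$ is already $2$-connected.

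With this patch, your one-vertex route via Lemma~\ref{2-R-2} is a legitimate alternative to the paper's proof. The paper instead deletes two vertices $x,y$, so only one matching edge must come from inside $H$. It then splits on whether $G-\{x,y\}$ is minimally $2$-connected, using Dirac's bound, Mader's count and a last-ear rerouting.
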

\begin{pf}
For a contradiction, suppose that $G\not\cong K_5$ and $G-M$ is not $2$-connected for any $3$-matching $M$ in $G$. By Theorem \ref{thm:ckl} and since $\delta(G)\ge 4$, there exist two vertices $x$ and $y$ such that $G-\{x, y\}$ is $2$-connected. Let $H=G-\{x, y\}$ and let $h=|V(H)|$.

\noindent{\it Case~1.} $H$ is not a minimally $2$-connected graph.

Let $F$ be an edge set in $H$ such that $H-F$ is minimally $2$-connected.
If $|(N_G(x)\cup N_G(y))\cap V(H)|\ge 4$, then for any edge $e\in F$, there exist $e_1\in E_G[x,V(H)]$ and $e_2\in E_G[y,V(H)]$ such that $\{e,e_1,e_2\}$ is a $2$-removable $3$-matching in $G$, as $|N_G(x)\cap V(H)|\ge 3$ and $|N_G(y)\cap V(H)|\ge 3$. Hence, $|(N_G(x)\cup N_G(y))\cap V(H)|\le 3$. Since $d_G(x)\ge \delta(G)=4$, we have that $|(N_G(x)\cup N_G(y))\cap V(H)|=3$.

If there is an edge $e\in F$  with $V(e)\not\subseteq (N_G(x)\cup N_G(y))\cap V(H)$, then there exist $e_1\in E_G[x,V(H)]$ and $e_2\in E_G[y,V(H)]$ such that $\{e,e_1,e_2\}$ is a $2$-removable $3$-matching  in $G$. Thus, $V(e)\subseteq (N_G(x)\cup N_G(y))\cap V(H)$ for each $e\in F$. Since $\delta(G)\ge 4$, It follows that $H-F$ has at most three vertices of degree $2$.
On the one hand, $|E(H-F)|\ge \frac{3\times 2}{2} +\frac{(h-3)\times4}{2}=2h-3$. On the other hand, by Theorem \ref{Size:Dirac}, $|E(H-F)|\le 2h-4$, a contradiction.

\noindent{\it Case~2.} $H$ is a minimally $2$-connected graph.

Since $G\not\cong K_5$ and $\delta(G)\ge 4$, we have that $h\ge 4$ and $V_2(H)\subseteq N_G(x)\cap N_G(y)$. By Theorem \ref{Count: Mader}, $|V_2(H)|\ge \frac{(2-1)h+2\times 2}{2\times 2-1}\ge \frac{8}{3}$.

If $|V_2(H)|=3$ and $|(N_G(x)\cup N_G(y))\cap V(H)|=3$, then $|E(H)|\ge \frac{3\times 2}{2} +\frac{(h-3)\times4}{2}=2h-3$. On the other hand, by Theorem \ref{Size:Dirac}, $|E(H)|\le 2h-4$, a contradiction.

If $|V_2(H)|\ge 4$ or $|V_2(H)|=3$ and $|(N_G(x)\cup N_G(y))\cap V(H)|\ge 4$, then $V_2(H)$ is an independent set in $H$. Otherwise, for each edge $e\in E(H[V_2(H)])$, there exist $e_1\in E_G[x,V(H)]$ and $e_2\in E_G[y,V(H)]$ such that $\{e,e_1,e_2\}$ is a $2$-removable $3$-matching in $G$. Let $F$ be the last ear in an ear decomposition of $H$. Since $H$ is minimally $2$-connected, $F$ has at least three vertices and both ends of $F$ have degree at least three in $H$. As $V_2(H)$ is an independent set in $H$, $F$ has exactly three vertices. Write $F=uvw$. This implies that the graph obtained from $H-uv$ by adding vertex and edges $xv, xv'$ is a $2$-connected graph, where $v'\in N_G(x)\setminus \{v\}$. Choose $x'\in N_G(x), y'\in N_G(y)$ such that $x',y',u$ and $v$ are distinct. Then $\{uv,xx',yy'\}$ is a $2$-removable $3$-matching in $G$.
\end{pf}\hfill$\qed$

\begin{theorem}\label{2-R-half-m}
For $k\le 2$,  every $k$-connected graph $G$ with $\delta(G)\ge k+1$ contains a $k$-removable $\lceil(\delta(G)+1)/2\rceil$-matching, unless $\delta(G)=2$ and $G$ is a cycle, or $\delta(G)\ge 4$ is even and $G$ is a complete graph of order $\delta(G)+1$.
\end{theorem}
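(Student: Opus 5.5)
We split according to the value of $\delta(G)$, and for large $\delta(G)$ we combine Hasunuma's matching result, Corollary~\ref{cor:hasunuma-matching}, with the small cases proved above.

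\emph{Parity of $\delta$.} Corollary~\ref{cor:hasunuma-matching} applies whenever $\delta(G)\ge k+2$ and gives a $k$-removable $\lfloor(\delta(G)+1)/2\rfloor$-matching. When $\delta(G)$ is odd, $\lfloor(\delta(G)+1)/2\rfloor=\lceil(\delta(G)+1)/2\rceil$, so the statement follows at once. The remaining work is the case of even $\delta(G)$, where one extra edge is needed, together with the small values $\delta(G)\in\{2,3\}$ where the corollary may not apply.

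\emph{Small $\delta$.}
\begin{itemize}
\item For $\delta(G)=2$ (so $k=1$), the target is a $2$-matching, and Lemma~\ref{1-R-2} supplies it unless $G$ is a cycle.
\item For $\delta(G)=3$, the target is again a $2$-matching. It comes from Lemma~\ref{1-R-2} when $k=1$ (a graph with $\delta(G)=3$ is not a cycle), and from Lemma~\ref{2-R-2} when $k=2$.
\item For $\delta(G)=4$ with $k=2$, Lemma~\ref{2-R-3} gives a $3$-matching unless $G\cong K_5$.
\item For $\delta(G)=4$ with $k=1$, we apply Lemma~\ref{2-R-3} to each block. Alternatively, and more simply, we take a spanning tree $T$ and look for $3$ independent non-tree edges. $G$ has at least $2n-(n-1)=n+1$ non-tree edges, and a matching of size $3$ among them should exist unless $G$ is very small, i.e.\ $K_5$.
\end{itemize}

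\emph{Even $\delta=2t\ge 6$.} We need a $k$-removable $(t+1)$-matching. Following the proof of Lemma~\ref{2-R-3}, we apply Theorem~\ref{thm:mader-path} (with $k\le 2$, $\lfloor 3k/2\rfloor\le 3$), or Theorem~\ref{thm:hasunuma-kle2} with a suitable tree, to remove a small vertex set $W$ so that $H=G-W$ stays $k$-connected and still has large minimum degree. Inductively, or via Corollary~\ref{cor:hasunuma-matching} applied to $H$ with $\delta(H)\ge\delta-|W|$, we obtain a $k$-removable matching $M_H$ of $H$. We then attach one edge from each vertex of $W$ into $H$, avoiding $V(M_H)$; this is possible because each $w\in W$ has at least $\delta-|W|$ neighbours in $H$, while $|V(M_H)|\approx\delta-2$, counting carefully. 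Since $k\le 2$ and each vertex of $W$ keeps at least $k$ neighbours in $H-M_H$, the graph $G-M$ remains $k$-connected: reattaching vertices with $k$ neighbours to a $k$-connected graph preserves $k$-connectivity.

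The delicate point is the counting that guarantees each $w\in W$ has a neighbour outside $V(M_H)\cup W$ while $|M_H|+|W|\ge t+1$. We would first try $W=\{x,y\}$ (as in Lemma~\ref{2-R-3}) with $M_H$ a $(t-1)$-matching. Then $\delta(H)\ge 2t-2$, so Corollary~\ref{cor:hasunuma-matching} yields $\lfloor(2t-1)/2\rfloor=t-1$ edges, covering $2t-2$ vertices. Each of $x,y$ has at least $2t-2$ neighbours in $H$, possibly all inside $V(M_H)$. This is exactly the obstacle: the degenerate configuration $N_H(x)=N_H(y)=V(M_H)$ forces a near-complete structure. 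In that case we would re-choose $M_H$, for instance by swapping an edge of $M_H$ for an edge incident to $x$ or $y$ as in Claim~\ref{noedge}, or use the edge count of Theorem~\ref{Size:Dirac} as in Case~1 of Lemma~\ref{2-R-3}, to derive a contradiction unless $G\cong K_{\delta+1}$. Finally, we check that $K_{\delta+1}$ with $\delta$ even is a genuine exception: it has only $\delta+1$ vertices (odd), so its largest matching has size $\delta/2<\lceil(\delta+1)/2\rceil$.
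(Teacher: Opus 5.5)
Your odd-$\delta(G)$ case is correct and takes a different route from the paper. Taking $T$ to be a path of order $\delta(G)+1$ in Theorem~\ref{thm:hasunuma-kle2} gives $\lceil\delta(G)/2\rceil=\lceil(\delta(G)+1)/2\rceil$ independent removable edges directly, while the paper uses Hall's theorem. However, there is a genuine gap in the even case, which is the only part not already covered by Corollary~\ref{cor:hasunuma-matching}.

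With $W=\{x,y\}$, at most two edges of the matching can meet $W$. So $t-1$ edges must lie in $H$, and your only certificate that deleting them keeps $H$ $k$-connected is the uncontrolled output $M_H$ of the corollary. Your count is slightly off: $|N_H(x)|\ge 2t-1>|V(M_H)|$, so $x$ and $y$ each have a neighbour outside $V(M_H)$. The real obstruction is $N_H(x)\setminus V(M_H)=N_H(y)\setminus V(M_H)=\{z\}$, and nothing in your setup excludes it. Your remedies are only named, and they do not obviously work:
\begin{itemize}
\item Trading an edge $ab\in M_H$ for $xa,yb$ leaves the size at $t$. The missing edge must again come from $H$, and you cannot control whether its removal is harmless.
\item Theorem~\ref{Size:Dirac} needs a minimally $2$-connected graph, and you never set one up.
\item The claim that this configuration forces $G\cong K_{\delta+1}$ is unfounded. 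It is a property of the pair $(W,M_H)$, not of $G$.
\end{itemize}

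The case $k=1$, $\delta(G)=4$ is also unproved. Blocks need not have minimum degree $4$, since a cut vertex can have small degree inside its block, so Lemma~\ref{2-R-3} does not apply to them. Also, $n+1$ non-tree edges do not force a $3$-matching among them: a graph whose edges all pass through two fixed vertices can have up to $2n-3$ edges.

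The missing idea is to delete no edge of the $k$-connected core at all.
\begin{itemize}
\item Apply Theorem~\ref{thm:ckl} repeatedly to get $X$ with $|X|=\lceil(\delta+1)/2\rceil$ and $H=G-X$ still $k$-connected. This is legitimate for $\delta\ge k+3$, because the minimum degree stays at least $\lfloor(\delta+1)/2\rfloor\ge k+1\ge\lfloor 3k/2\rfloor$.
\item Every $x\in X$ then has at least $k+1$ neighbours in $H$. Hence any matching of $G[X,V(H)]$ covering $X$ is automatically $k$-removable.
\item By Theorem~\ref{Matching: Hall}, such a matching can fail to exist only in a rigid situation: $\delta$ is even, $G[X]$ is complete, and $X$ is completely joined to a set $N_G(X)$ of exactly $\delta/2$ vertices, with no other edges leaving $X$.
\item Since $G\not\cong K_{\delta+1}$, some $y\in V(H)\setminus N_G(X)$ exists. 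The paper uses it to find one edge of $H$ whose deletion is harmless. For $k=1$ this comes from a cycle of $H$ through $y$, or else from a component of $H-y$ avoiding $N_G(X)$. For $k=2$ it comes from a leaf of the forest $H-N_G(X)$ supplied by Theorem~\ref{Forest: Mader}.
\item That edge is completed with suitable edges of $G[X]$ and of the complete bipartite graph $G[X,N_G(X)]$.
\end{itemize}
This argument also covers $k=1$, $\delta=4$ uniformly.
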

\begin{pf}
For $k=1$, if $\delta(G) \in \{2,3\}$, Lemma \ref{1-R-2} implies $G$ contains our desired matching unless $\delta(G)=2$ and $G$ is a cycle.  For $k=2$, if $\delta(G)\in\{3, 4\}$, Lemmas \ref{2-R-2} and \ref{2-R-3} imply $G$ contains our desired matching unless $\delta(G)=4$ and $G\cong K_5$. Thus, in what follows, we assume that $\delta(G)\ge k+3$.

By repeatedly applying Theorem \ref{thm:ckl}, we can select a set $X $ of $\lceil(\delta(G)+1)/2\rceil$ vertices such that $G-X$ remains $k$-connected. This is possible because $\delta(G) - (|X|-1) = \lfloor(\delta(G)+1)/2\rfloor \ge \lfloor(k+3+1)/2\rfloor\ge k+1$ for $k\le 2$. Let $H =G-X$ and $G' = G[X,V(H)]$. For any $x \in X$, its degree in $G'$ is $d_{G'}(x) \ge d_G(x) - (|X|-1) \ge \delta(G) - (\lceil(\delta(G)+1)/2\rceil-1) \ge \lfloor(\delta(G)+1)/2\rfloor \ge k+1$ for $k\le 2$.

If $G'$ contains a matching $M$ covering $X$, then $G-M$ is $k$-connected because $H$ is $k$-connected and
$|N_G(x)\cap V(H)|=d_{G'}(x) \ge k+1$ for every $x \in X$. Thus $M$ is a $k$-removable $\lceil(\delta(G)+1)/2\rceil$-matching. Otherwise, by Hall's condition (Theorem \ref{Matching: Hall}), there exists $S \subseteq X$ with $|N_{G'}(S)| < |S|$. Since $|X|=\lceil(\delta(G)+1)/2\rceil$ and $d_{G'}(x)\ge\lfloor(\delta(G)+1)/2\rfloor$ for every $x \in X$, we have that $S=X$, $|N_{G'}(X)| < |X|$, and $d_{G'}(x)=\lfloor(\delta(G)+1)/2\rfloor$ for every $x \in X$. Since $\lceil(\delta(G)+1)/2\rceil=|X| > |N_{G'}(X)| \ge d_{G'}(x) = \lfloor(\delta(G)+1)/2\rfloor$ for every $x\in X$, it follows that $\delta(G)$ must be even, $|N_{G'}(X)|=\lfloor(\delta(G)+1)/2\rfloor$,  $G[X]$ is a complete graph of order $\lceil(\delta(G)+1)/2\rceil$, and $G'$ is a complete bipartite graph.

Since $\delta(G)$ is even, and by our assumption, $G$ is not a complete graph of order $\delta(G)+1$, it follows that $V(H) \setminus N_{G'}(X) \neq \emptyset$. Since $N_G(X)=N_{G'}(X)$, we have that $V(H) \setminus N_G(X) \neq \emptyset$. Consider the following two cases.

\noindent{\it Case~1.} $k=1$.

For a vertex $y \in V(H) \setminus N_G(X)$, if $H$ contains a cycle $C$ through $y$, let $y'$ be a neighbor of $y$ on $C$. Choose distinct vertices $x,x' \in X$. It follows that $G'-\{x,x',y,y'\}$ contains a matching $M'$ covering $X \setminus \{x,x'\}$, and $M' \cup \{xx',yy'\}$ is the desired matching. Therefore, we may assume that for any $y' \in N_{H}(y)$, $yy'$ is a cut edge in $G$. This implies that $H-y$ has exactly $d_G(y)$ components. Since $d_G(y) > |N_G(X)|$, by the Pigeonhole Principle, at least one component of $H-y$, say $H_1$, is vertex-disjoint from $N_G(X)$. By Lemma \ref{1-R-2}, the subgraph $H_1$ has a $1$-removable matching $M_1$ because $\delta(H_1) \ge 3$. Furthermore, a matching $M'$ of size $\lfloor(\delta(G)+1)/2\rfloor$ can be found in $G[X, N_G(X)]$. Then $M' \cup M_1$ is the desired matching.

\noindent{\it Case~2.} $k=2$.

Suppose there exists an edge $e$ such that $V(e)\cap (V(H)\setminus N_G(X))\neq\emptyset$ and $H-e$ is $2$-connected. Let $e=yy'$ and choose $x,x'\in X$. Then $ G'-\{x,x',y,y'\} $ admits a matching $M'$ covering $X\setminus\{x,x'\}$, and $M'\cup\{xx',yy'\}$ forms the desired matching. Hence, for any edge $e$ with $V(e)\cap (V(H)\setminus N_G(X))\neq\emptyset$, $H-e$ is not $2$-connected.

Without loss of generality, assume that $H$ is minimally 2-connected; otherwise, remove some edge set $F$ so that $H-F$ becomes minimally $2$-connected. Since $V(F)\cap (V(H)\setminus N_G(X))=\emptyset$, we have $d_{H-F}(v)=d_G(v)\ge \delta(G)\ge 5 $ for all $v\in V(H)\setminus N_G(X) $. By Theorem \ref{Forest: Mader} and because $V(H)\setminus N_G(X)\neq \emptyset$, the induced subgraph $H[V(H)\setminus N_G(X)] $ is a nonempty forest. Let $v$ be a leaf of this forest. As $d_G(v)\ge 5$, it follows that $|N_G(v)\cap N_G(X)|\ge 4$. Choose an edge $vv'$ with $v'\in N_G(X)$. Then every vertex $u\in V(H)\setminus N_G(X) $ has two internally disjoint paths to $ N_G(X) $ in $ H-F-vv' $. For two distinct $x,x'\in X$, $G'-\{x,x',v,v'\}$ contains a matching $M'$ covering $X\setminus\{x,x'\}$, and $M'\cup\{xx',vv'\}$ forms the required matching.
\end{pf}\hfill$\qed$

Before examining $3$-removable $\lceil(\delta(G)+1)/2\rceil$-matchings in $3$-connected graphs {\color{red} $G$} with $\delta(G)\ge 5$, we first establish the following lemma.

\begin{lemma}\label{3-R-4}
Every $3$-connected graph $G$ with $\delta(G)=6$ contains a $3$-removable $4$-matching, unless $G\cong K_7$.
\end{lemma}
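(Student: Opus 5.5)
We follow the template of Lemma~\ref{2-R-3}, which is the analogous statement for $k=2$, $\delta=4$. Suppose $G\not\cong K_7$ and that no $4$-matching $M$ has $G-M$ $3$-connected.

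\emph{Step 1: reduce to a small deletable set.} Since $\delta(G)=6\ge\lfloor 9/2\rfloor=4$, Theorem~\ref{thm:ckl} gives a vertex $x_1$ with $G-x_1$ $3$-connected. Now $\delta(G-x_1)\ge 5\ge 4$, so a second application gives $x_2$, and $\delta(G-\{x_1,x_2\})\ge 4$ gives a third vertex $x_3$. Hence there is $X=\{x_1,x_2,x_3\}$ with $H=G-X$ $3$-connected, and each $x\in X$ has at least $4$ neighbours in $V(H)$.

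\emph{Step 2: lifting principle.} If $F'$ is a matching in $H$ with $H-F'$ $3$-connected, and $M_X$ is a matching joining each $x\in X$ to $V(H)\setminus V(F')$, then $G-(F'\cup M_X)$ is $3$-connected. The reason is that each $x$ keeps at least $3$ neighbours in $H$, so adding a vertex with at least $3$ neighbours to a $3$-connected graph preserves $3$-connectivity. It therefore suffices to find one edge $e\in E(H)$ with $H-e$ $3$-connected, together with a matching of $X$ into $V(H)\setminus V(e)$.

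Alternatively, $e$ may be an edge inside $X$, namely $x_ix_j$. Then we match $x_i$, $x_j$ and the third vertex into $H$ and add one further removable edge, which handles configurations in which $G[X]$ has edges. Each $x$ has at least $4$ neighbours in $H$, so after deleting the two ends of $e$ at least $2$ remain. Hall's condition (Theorem~\ref{Matching: Hall}) for matching the three vertices of $X$ then fails only if $|N_G(X)\cap V(H)\setminus V(e)|\le 2$. This forces $|N_G(X)\cap V(H)|\le 4$, so all three vertices have the same four neighbours $N$ in $H$ and $X$ is a clique.

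\emph{Step 3: case analysis.} Following Lemma~\ref{2-R-3}, we split into two cases.

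\textbf{Case 1: $H$ is not minimally $3$-connected.} Let $F$ be an edge set with $H-F$ minimally $3$-connected. If some $e\in F$ has an end outside $N$, or $|N_G(X)\cap V(H)|\ge 5$, Step~2 finishes. Otherwise every vertex of degree $3$ in $H-F$ lies in $N$, since vertices outside $N\cup V(F)$ keep degree at least $6$ in $H$. This gives at most $4$ such vertices. Theorem~\ref{Count: Mader} gives $|V_3(H-F)|\ge (2h+6)/5$, where $h=|V(H)|$, so $h\le 7$. The remaining small cases are then handled directly, or contradicted by counting edges with Theorem~\ref{Size: Mader} (at most $3(h-3)$ edges when $h\ge 7$, against at least $(4\cdot 3+(h-4)\cdot 6)/2$).

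\textbf{Case 2: $H$ is minimally $3$-connected.} Then $V_3(H)\subseteq N$ is a subset of the common neighbourhood of $X$, which has size $4$, because those vertices need degree $6$ in $G$. Again Theorem~\ref{Count: Mader} forces $h\le 7$, and the degree sum against Theorem~\ref{Size: Mader} forces $H$ to be very small and dense; the smallest option is $h=4$, giving $H=K_4$ and $G=K_7$. The $h\le 7$ cases should be killed one by one. In each, the plan is to exhibit explicitly an edge $uv$ of $H$ between a degree-$3$ vertex and another vertex, together with re-routing through $X$ (as with $F=uvw$ in Lemma~\ref{2-R-3}), so that replacing $uv$ by edges from $X$ keeps $3$-connectivity.

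\emph{Main obstacle.} The hardest part is the small residual configurations with $h\in\{5,6,7\}$, where all low-degree vertices are concentrated in $N$. Here neither Mader's counting nor Theorem~\ref{Size: Mader} directly gives a contradiction. First, I would use the fact that $X\cup N$ then induces nearly a $K_7$ while the vertices of $V(H)\setminus N$ have degree at least $6$ in $H$. This can be bounded by $|E(H)|\le 3(h-3)$, which for $h\ge 7$ contradicts the degree sum. The few cases with $h\le 6$ are then checked by hand.
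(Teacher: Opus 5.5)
\textbf{Verdict: genuine gap in Case~2.} Your Step~1, the lifting principle of Step~2, and your Case~1 are sound. In Case~1 the ``remaining small cases'' are in fact vacuous. If $V(H)=N$, then $H=K_4$, which is minimally $3$-connected. Otherwise a vertex of $V(H)\setminus N$ has all of its at least $6$ neighbours in $H$, so $h\ge 7$. The case $h=7$ is then killed by your count $15>12$ from Theorem~\ref{Size: Mader}, which is a cleaner route than the paper's Claim~\ref{cla: atleast8}.

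\textbf{Where Case~2 fails.} You assert that $V_3(H)$ lies in ``the common neighbourhood of $X$, which has size $4$''. But three facts were deduced in Step~2 only from the failure of Hall's condition for some edge $e\in E(H)$ with $H-e$ $3$-connected: that $|N_G(X)\cap V(H)|=4$, that all of $X$ sees the same four vertices, and that $X$ is a clique. When $H$ is minimally $3$-connected no such edge exists. So the common neighbourhood $Y$ of $X$ in $H$ is unconstrained, and neither Theorem~\ref{Count: Mader} nor Theorem~\ref{Size: Mader} bounds $h$.

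For example, take $G=K_{6,m}$ with $m\ge 6$, and let $X$ be three vertices of the $6$-side. Then $H=K_{3,m}$ is minimally $3$-connected, $Y=V_3(H)$ is the whole $m$-side, and $h=m+3$ is arbitrarily large. So ``$h\le 7$'' is false, and your ``main obstacle'' is misidentified: the hard configurations in Case~2 are large, not small. Your argument does cover the subcase $|N_G(X)\cap V(H)|=4$ of Case~2, since then each $x$ has the same four neighbours in $H$ and the same count applies. It does not cover the subcase $|N_G(X)\cap V(H)|\ge 5$.

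\textbf{What is missing} is the two-part treatment in the paper's Case~2.

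\emph{Some $N_G(x)\cap V(H)$ spans an edge $ab$.} Delete one edge $xx'$ with $x'\notin\{a,b\}$. Then $x$ is still adjacent to $a$, $b$ and a third vertex, so $H-ab$ together with $x$ is $3$-connected, and $ab$ plays the role of your removable edge. When $|N_G(X)\cap V(H)|\ge 5$, Hall's theorem supplies $xx',yy',zz'$ avoiding $a$ and $b$.

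\emph{Every $N_G(x)\cap V(H)$ is independent.} Then $V_3(H)\subseteq Y$, and $|Y|\ge 4$ by Theorem~\ref{Count: Mader}, so $G[X,Y]\supseteq K_{3,|Y|}$ is a $3$-connected core. By Theorem~\ref{Forest: Mader}, $H-Y$ is a forest. A vertex of $N_G(x)\cap V(H)$ outside $Y$ has no neighbour in $Y$, so it has degree at least $4$ in $H-Y$. Hence leaves of the forest are non-adjacent to $X$ and have at least $5$ neighbours in $Y$. The $4$-matching is built from edges joining leaves (and, if needed, their forest-neighbours) to $Y$. $3$-connectivity is certified by $3$-fans into $Y$.

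Your plan of ``an edge between a degree-$3$ vertex and another vertex, re-routed through $X$'' supplies neither ingredient. Re-routing works only for an edge inside a single $N_G(x)\cap V(H)$, and the independent case needs the forest structure of $H-Y$.

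\textbf{A smaller point.} Your remark that ``$e$ may be an edge $x_ix_j$'' is not coherent as stated: a matching containing $x_ix_j$ cannot also match $x_i$ and $x_j$ into $H$.
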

\begin{pf}
For contradiction, suppose that $G\not\cong K_7$ and that $G-M$ is not $3$-connected for any matching $M$ of size $4$ in $G$. By Theorem~\ref{thm:ckl} and  $\delta(G)\ge 6$, there exists a vertex set $\{x, y, z\}$ such that $G-\{x, y, z\}$ is $3$-connected. Let $X=\{x, y, z\}$, $H=G-\{x, y, z\}$, and $h=|V(H)|$.

\begin{claim}\label{cla: atleast8}
If $|N_G(X)\cap V(H)|=4$, then $h\ge 8$.
\end{claim}
Since $G\cong K_7$, the set $V(H)\setminus N_G(X)$ is nonempty. Let $v\in V(H)\setminus N_G(X)$. As $d_G(v)\ge \delta(G)\ge 6$, $H$ has order at least $7$. If $h=7$, then $G[N_G(X)\cap V(H), V(G)\setminus(N_G(X)\cap V(H))\cong K_{4,6}$, implying that $G$ contains a $3$-removable $4$-matching, a contradiction. This completes the proof of Claim \ref{cla: atleast8}.

\noindent{\it Case~1.} $H$ is not a minimally $3$-connected graph.
	
Let $F\subseteq E(H)$ be an edge set such that $H-F$ is minimally $3$-connected. Since $\delta(G)\ge 6$, we have that $|N_G(v)\cap V(H)|\ge 4$ for every $v\in\{x,y,z\}$.
If $|N_G(X)\cap V(H)|\ge 5$, then for any edge $e\in F$, there exist $e_1\in E_G[x,V(H)]$, $e_2\in E_G[y,V(H)]$ and $e_3\in E_G[z,V(H)]$ such that the set $\{e,e_1,e_2,e_3\}$ forms a $3$-removable $4$-matching in $G$, a contradiction. Hence $|N_G(X)\cap V(H)|=4$. By Claim \ref{cla: atleast8}, $h\ge 8$.
	
If there exists $e\in F$ with $V(e)\not\subseteq N_G(X)\cap V(H)$, then there exist $e_1\in E_G[x,V(H)]$, $e_2\in E_G[y,V(H)]$ and $e_3\in E_G[z,V(H)]$ such that the set $\{e,e_1,e_2,e_3\}$ forms a $3$-removable $4$-matching in $G$, leading to a contradiction. Thus, $V(e)\subseteq N_G(X)\cap V(H)$ for every $e\in F$. On the one hand, since $\delta(G)\ge 6$ and $|N_G(X)\cap V(H)|=4$, $H-F$ has at most four vertices of degree $3$. On the other hand, since $h\ge 8$, by Theorem~\ref{Count: Mader}, $H-F$ has at least $\frac{(3-1)h+2\times3}{2\times3-1}>4$ vertices of degree $3$, a contradiction.
	
\noindent{\it Case~2.} $H$ is a minimally $3$-connected graph.

Suppose that $N_G(v)\cap V(H)$ is not an independent set for some $v\in \{x,y,z\}$. If $|N_G(X)\cap V(H)|\ge 5$, then for any edge $e$ such that $V(e)\subseteq N_G(v)\cap V(H)$, there exist edges $e_1\in E_G[x,V(H)]$, $e_2\in E_G[y,V(H)]$, and $e_3\in E_G[z,V(H)]$ such that the set $\{e,e_1,e_2,e_3\}$ forms a $3$-removable $4$-matching in $G$, a contradiction. Hence $|N_G(X)\cap V(H)|= 4$. Since $\delta(G)=6$, we have that $H$ has at most four vertices of degree $3$. By Claim~\ref{cla: atleast8}, $h\ge 8$. By Theorem~\ref{Count: Mader}, $H$ has at least $4$ vertices of degree $3$, a contradiction.

Therefore, $N_G(v)\cap V(H)$ is an independent set for every $v\in X$. Let $Y=\{v\in V(H): \{x,y,z\}\subseteq N_G(v)\}$, and let $H'=G[X,Y]$. Since $G\not\cong K_7$ and $\delta(G)\ge 6$, we conclude that $h\ge 5$ and $V_3(H)\subseteq Y$. By Theorem \ref{Count: Mader}, we have that $|V_3(H)|\ge \frac{(3-1)h+2\times3}{2\times 3-1}>3$. Thus, $|Y|\ge |V_3(H)|\ge 4$ and $H'$ is $3$-connected.

If $|V(H)\setminus Y|=3$, then $G[Y,V(G)\setminus Y]$ is a complete bipartite graph with $|V(G)\setminus Y|=6$ and $|Y|\ge 4$. Thus, $G$ contains a $3$-removable $4$-matching, a contradiction.

Suppose $|V(H)\setminus Y|\ge 4$. Since $H$ is minimally $3$-connected, Lemma \ref{Forest: Mader} implies that $H-Y$ is a forest. Since $H$ is $3$-connected, for any $v\in V(H)\setminus  Y$, there exist three internally disjoint paths from $v$ to $Y$ in $H$.

Let $Z = \{v_1, \dots, v_{\ell}\}$ be the set of leaves in $H-Y$, where $\ell \ge 2$. If $\ell\ge 4$, then since $|N_G(v_i)\cap Y|\ge \delta(G)-1\ge 5$ for $i\in \{1,2,3,4\}$, we can choose $v_i'\in N_G(v_i)\cap Y$ such that $M=\{v_1v_1',v_2v_2',v_3v_3',v_4v_4'\}$ is a $4$-matching. Since there exist three internally disjoint paths from $v$ to $Y$ for any $v\in V(H)\setminus Y$ in $H-M$, the matching $M$ is a $3$-removable $4$-matching in $G$, a contradiction. If $\ell= 3$, then choose $u_1\in V(H)\setminus Y$ such that $v_1u_1\in E(H-Y)$. Since $|N_G(v_i)\cap Y|\ge 5$ for $i\in \{1,2,3\}$ and $|N_G(u_1)\cap Y|\ge 3$, we can choose $v_i'\in N_G(v_i)\cap Y$ and $u_1'\in N_G(u_1)\cap Y$ such that $\{v_1v_1',v_2v_2',v_3v_3',u_1u_1'\}$ is a $3$-removable $4$-matching in $G$, a contradiction. If $\ell=2$, then choose $u_1,u_2\in V(H)\setminus Y$ such that $v_1u_1\in E(H-Y)$ and $v_2u_2\in E(H-Y)$. Since $|N_G(v_i)\cap Y|\ge 5$ and $|N_G(u_i)\cap Y|\ge 4$ for $i\in \{1,2\}$, we can choose $v_i'\in N_G(v_i)\cap Y$, $u_i'\in N_G(u_i)\cap Y$ such that $\{v_1v_1',v_2v_2',u_1u_1',u_2u_2'\}$ is a $3$-removable $4$-matching in $G$, a contradiction.
\end{pf}\hfill$\qed$

\begin{theorem}\label{3-R-half-m} Every $3$-connected graph $G$ with $\delta(G)\ge 5$ contains a $3$-removable $\lceil(\delta(G)+1)/2\rceil$-matching, unless $\delta(G)$ is even and $G\cong K_{\delta(G)+1}$.
\end{theorem}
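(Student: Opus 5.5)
I would follow the template of Theorem~\ref{2-R-half-m}: dispose of the small values of $\delta(G)$ with dedicated lemmas, then treat large $\delta(G)$ by removing a vertex set and matching it via Hall's theorem. Let $d=\delta(G)$ and $m=\lceil (d+1)/2\rceil$.

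\textbf{Small cases.} For $d=5$ we need $m=3$, and for $d=6$ we need $m=4$. The case $d=6$ is exactly Lemma~\ref{3-R-4}, with $K_7$ as the exception. For $d=5$ I would prove a $3$-removable $3$-matching directly, imitating the proof of Lemma~\ref{3-R-4}. By Theorem~\ref{thm:ckl}, since $5\ge\lfloor 9/2\rfloor+1$, we may delete two vertices $x,y$ so that $H=G-\{x,y\}$ is $3$-connected, and each of $x,y$ has at least $4$ neighbours in $H$. If $H$ has a removable edge $e$, I would match $x$ and $y$ into $V(H)\setminus V(e)$ to obtain the matching. Otherwise $H$ is minimally $3$-connected, and I would combine the counting of Theorem~\ref{Count: Mader} with the forest structure of Theorem~\ref{Forest: Mader}, as in Case~2 of Lemma~\ref{3-R-4}. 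No complete-graph exception arises here because $d$ is odd ($K_6$ has a $3$-removable perfect matching).

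\textbf{Large cases, $d\ge 7$.} Apply Theorem~\ref{thm:ckl} repeatedly to choose a set $X$ with $|X|=m$ and $H=G-X$ still $3$-connected. This is valid because, after $i<m$ deletions, the minimum degree is at least $d-(m-1)=\lfloor (d+1)/2\rfloor\ge 4=\lfloor 9/2\rfloor$. In $G'=G[X,V(H)]$ every $x\in X$ has degree at least $\lfloor(d+1)/2\rfloor\ge 4$. If Hall's condition (Theorem~\ref{Matching: Hall}) holds, a matching $M$ covering $X$ works: $H$ is $3$-connected, and each $x$ keeps at least $3$ neighbours in $H$ after deleting $M$, so $G-M$ is $3$-connected.

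If Hall's condition fails, the same computation as in Theorem~\ref{2-R-half-m} forces the following: $d$ is even, $G[X]$ is complete, $G'$ is complete bipartite onto $N:=N_G(X)$, and $|N|=d/2$. Since $G\not\cong K_{d+1}$, the set $W=V(H)\setminus N$ is nonempty. It then suffices to find an edge $vv'\in E(H)$ with $v\in W$ such that $H-vv'$ is $3$-connected. Given such an edge, take $x,x'\in X$, match $X\setminus\{x,x'\}$ into $N\setminus\{v'\}$ (possible since $|N|-1\ge |X|-2$), and use $\{xx',vv'\}$ together with this matching. Deleting $xx'$ is harmless because each $x$ still has at least $d/2-1\ge 3$ neighbours in $N$.

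\textbf{Main obstacle.} The hard step is showing that such an edge exists; I would mirror Case~2 of Theorem~\ref{2-R-half-m}. Suppose no edge meeting $W$ is $3$-removable in $H$. Reduce $H$ to a minimally $3$-connected spanning subgraph $H-F$ with $V(F)\cap W=\emptyset$, so vertices of $W$ keep degree at least $d\ge 8$. By Theorem~\ref{Forest: Mader}, $H[W]$ is a forest, and a leaf $v$ of it has at least $d-1$ neighbours in $N$. I then need that, after deleting one edge $vv'$ with $v'\in N$, every vertex still has three internally disjoint paths to $N$, and that $N$ itself stays sufficiently connected through $X$. The second point holds since $G[X\cup N]$ contains $K_{m,d/2}$ with $d/2\ge 4$. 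The delicate part is a fan-lemma argument verifying that no $2$-separator appears: $v$ keeps at least $d-2\ge 3$ edges into $N$, and the other vertices are unaffected. For the $d=5$ lemma the parallel difficulty is the minimally $3$-connected subcase, which I expect to need the leaf analysis of Lemma~\ref{3-R-4}.
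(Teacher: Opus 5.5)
Your treatment of $\delta(G)\ge 7$ follows the paper's argument. However, the step you single out as the main obstacle is not an obstacle. You already note that a leaf (or isolated vertex) $v$ of the forest $H[W]$ has at least $d-1$ neighbours in $N$, and also that $|N|=d/2$. Since $d-1>d/2$, these two facts are an immediate contradiction. So the assumption that no edge meeting $W$ is $3$-removable in $H$ cannot hold, the edge $vv'$ you need exists, and the proof ends there. This is exactly how the paper concludes, via $d_G(u)\le 1+|N_G(X)|<\delta(G)$. The fan-lemma verification you plan is therefore unnecessary. As sketched it would not be justified anyway: other vertices' paths to $N$ may use $vv'$, so ``the other vertices are unaffected'' would itself need proof.

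The genuine gap is the case $\delta(G)=5$. Your plan isolates the only nontrivial subcase, $H=G-\{x,y\}$ minimally $3$-connected, and then defers to an unspecified adaptation of Case~2 of Lemma~\ref{3-R-4}. That analysis is built on three deleted vertices whose common neighbourhood $Y$ gives a $3$-connected $K_{3,|Y|}$. With only two deleted vertices of degree $5$, the analogous graph $G[\{x,y\},Y]$ is merely $K_{2,|Y|}$, which is not $3$-connected. Moreover, vertices of degree $4$ in $H$ need only one of $x,y$ as a neighbour. It is unclear that any analogue works, and as written you have no proof for this subcase.

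The paper sidesteps this entirely:
\begin{enumerate}
\item By Theorem~\ref{thm:ckl}, choose $v$ with $G-v$ $3$-connected.
\item Then $\delta(G-v)\ge 4$, so Theorem~\ref{thm:two-matching} (Lemma~\ref{k-R-2} with $k=3$) gives a $3$-removable $2$-matching $M$ of $G-v$.
\item Since $d_G(v)\ge 5>|V(M)|=4$, there is an edge $vw$ with $w\notin V(M)$, so $M\cup\{vw\}$ is a $3$-matching.
\item $G-(M\cup\{vw\})$ is the $3$-connected graph $G-v-M$ plus the vertex $v$ with at least $4$ neighbours, hence $3$-connected.
\end{enumerate}
Replacing your two-vertex deletion with this reduction closes the gap.
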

\begin{pf}
We first consider the case $\delta(G)=5$. By Theorem \ref{thm:ckl}, there exists a vertex $v\in V(G)$ such that $G-v$ is $3$-connected. Since $\delta(G-v)\ge 4$, applying Theorem \ref{thm:two-matching} to $G-v$, we obtain a $3$-removable matching $M$ of size $2$ in $G-v$. Since $\delta(G)\ge 5$, there exists an edge $e\in E(G)$ such that $v\in V(e)$ and $V(e)\cap V(M)=\emptyset$. Consequently, the set $M\cup\{e\}$ is a $3$-removable $3$-matching  in $G$. For $\delta(G)=6$, the result follows from Lemma \ref{3-R-4}.

Next, assume that $\delta(G)\ge 7$ and that $G\not\cong K_{\delta(G)+1}$ for even $\delta(G)$. Since $\delta(G)-(\lceil(\delta(G)+1)/2\rceil-1)=\lfloor(\delta(G)+1)/2\rfloor\ge 4$,  by Theorem \ref{thm:ckl}, $G$ contains a vertex set $X$ of
size $\lceil(\delta(G)+1)/2\rceil$ such that $G-X$ is $3$-connected. Define $H=G-X$ and $G'=G[X,V(H)]$.
If $G'$ contains a matching $M$ covering $X$, then $G-M$ is $3$-connected since $H$ is $3$-connected and $|N_G(x)\cap V(H)|\ge\delta(G)-(\lceil(\delta(G)+1)/2\rceil-1)=\lfloor(\delta(G)+1)/2\rfloor \ge 4$ for each $x\in X$. Hence $M$ is a $3$-removable $\lceil(\delta(G)+1)/2\rceil$-matching, and we are done. Thus we may assume that $G'$ has no matching covering $X$.

If $\delta(G)$ is odd, then $G'$ contains a matching covering $X$, because for each $x \in X$, $d_{G'}(x)=|N_G(x) \cap V(H)| \ge \delta(G) - (\lceil(\delta(G)+1)/2\rceil - 1) \ge \lfloor(\delta(G)+1)/2\rfloor=\lceil(\delta(G)+1)/2\rceil$. This contradicts the assumption that $G'$ has no matching covering $X$. We may therefore assume that $\delta(G)$ is even. By the theorem's assumption, $G$ is not a complete graph of order $\delta(G)+1$. Thus $V(H) \setminus N_{G'}(X) \neq \emptyset$. As $N_G(X)=N_{G'}(X)$, we also have that $V(H) \setminus N_G(X) \neq \emptyset$.

If there exists a vertex $x\in X$ such that $d_{G'}(x)\ge \delta(G)-(\lceil(\delta(G)+1)/2\rceil-1)+1\ge \lfloor(\delta(G)+1)/2\rfloor +1= \lceil(\delta(G)+1)/2\rceil$, then $G'$ contains a matching covering $X$, which also leads to a contradiction. Thus, $d_{G'}(x)=\lfloor(\delta(G)+1)/2\rfloor$ for each $x\in X$. If $N_{G'}(x)\neq N_{G'}(x')$ for some distinct $x,x'\in X$, then $G'$ contains a matching covering $X$, a contradiction. Consequently, $N_{G'}(x)=N_{G'}(x')$ for any distinct $x,x'\in X$. Therefore, $G[X]$ is a complete graph of order $\lceil(\delta(G)+1)/2\rceil$, and $G[X, N_G(X)]$ is a complete bipartite graph.

If there exists an edge $e\in E(H)$ such that $V(e)\cap (V(H)\setminus N_G(X))\neq\emptyset$ and $H-e$ is $3$-connected, then $G[X\setminus \{x_1,x_2\},N_G(X) \setminus V(e)]$ contains a matching $M$ of size $\lceil(\delta(G)+1)/2\rceil-2$ such that $G-(M\cup \{e,x_1x_2\})$ is $3$-connected, where $x_1,x_2\in X$, a contradiction. Thus, for each edge $e\in E(H)$ such that $H-e$ is $3$-connected, we have that $V(e)\subseteq N_G(X)$. Let $F\subseteq E(H)$ be such that $H-F$ is minimally $3$-connected. Then $V(F)\subseteq N_G(X)$.

Since $d_H(u)=d_G(u)\ge \delta(G)\ge 7$ for every $u\in V(H)\setminus N_G(X)$, we have that $V_3(H-F)\subseteq N_G(X)$. By Theorem \ref{Forest: Mader}, $H-N_G(X)$ is a forest. Since $V(H) \setminus N_G(X) \neq \emptyset$, we can choose a leaf $u$ in $H-N_G(X)$. Then $d_G(u)=d_H(u)\le 1+|N_G(X)|=1+\lfloor(\delta(G)+1)/2\rfloor<\delta(G)$ for $\delta(G)\ge 7$, a contradiction.
\end{pf}\hfill$\qed$

By combining Theorems \ref{2-R-half-m} and \ref{3-R-half-m}, we obtain Theorem \ref{thm:half-delta}.

For $k \ge 4$, if the minimum degree is increased to $3k - 3$, then we can find a $k$-removable $\lceil(\delta(G)+1)/2\rceil$-matching, except when $\delta(G)$ is even and $G \cong K_{\delta(G)+1}$. This establishes Theorem~\ref{thm:3k-1}. The proof proceeds as follows.

\noindent{\bf The Proof of Theorem~\ref{thm:3k-1}.}
By repeatedly applying Theorem \ref{thm:ckl}, we can select a set $X$ of $\lceil(\delta(G)+1)/2\rceil$ vertices such that $G-X$ remains $k$-connected. This is possible because  $\delta(G) - (|X|-1) = \lfloor(\delta(G)+1)/2\rfloor \ge \lfloor 3k/2\rfloor$ for $\delta(G) \ge 3k-1$. Let $H=G-X$ and $G' = G[X,V(H)]$. For any $x \in X$, we have that $d_{G'}(x) \ge d_G(x) - (|X|-1) \ge \delta(G) - (\lceil(\delta(G)+1)/2\rceil-1) \ge  \lfloor(\delta(G)+1)/2\rfloor\ge \lfloor 3k/2\rfloor$ for $\delta(G) \ge 3k-1$. Since $k \ge 4$, we have that $\delta(G) \ge 9$.

If $G'$ contains a matching $M$ covering $X$, then $M$ is our desired matching because $H$ is $k$-connected and  $d_{G'}(x) \ge  \lfloor 3k/2\rfloor\ge k+1$ for all $x \in X$ when $k\ge 4$. Otherwise, by Hall's Theorem (Theorem \ref{Matching: Hall}), there exists $S \subseteq X$ with $|N_{G'}(S)| < |S|$. Given $|X|=\lceil(\delta(G)+1)/2\rceil$ and $d_{G'}(x)\ge\lfloor(\delta(G)+1)/2\rfloor$ for all $x \in X$, this implies $S=X$, $|N_{G'}(X)| < |X|$, and $d_{G'}(x)=\lfloor(\delta(G)+1)/2\rfloor$ for all $x \in X$. Since $\lceil(\delta(G)+1)/2\rceil=|X| > |N_{G'}(X)| \ge d_{G'}(x) = \lfloor(\delta(G)+1)/2\rfloor$, it follows that $\delta(G)$ must be even, $|N_{G'}(X)|=\lfloor(\delta(G)+1)/2\rfloor$,  $G[X]$ is a complete graph of order $\lceil(\delta(G)+1)/2\rceil$, and $G[X, N_{G'}(X)]$ is a complete bipartite graph.

Since $\delta(G)$ is even, by the theorem's assumption, $G$ is not a complete graph of order $\delta(G)+1$. Hence, $V(H) \setminus N_{G'}(X) \neq \emptyset$. As $N_G(X)=N_{G'}(X)$, we also have that $V(H) \setminus N_G(X) \neq \emptyset$.

Let $F \subseteq E(H)$ such that $H - F$ is minimally $k$-connected. If some $e \in F$ satisfies $V(e) \not\subseteq N_G(X)$, then there exists a $(\lceil(\delta(G)+1)/2\rceil - 2)$-matching $M'$ in $G[X \setminus \{x, x'\}, N_G(X) \setminus V(e)]$ for distinct $x, x' \in X$, such that $M' \cup \{e, xx'\}$ forms a $k$-removable $\lceil (\delta(G) + 1)/2 \rceil$-matching  in $G$. Hence, $V(e) \subseteq N_G(X)$ for all $e \in F$. Since $k\ge 4$ and $d_H(u)=d_G(u)\ge \delta(G) \ge 3k - 1>k$ for every $u\in V(H)\setminus N_G(X)$, it follows that $V_k(H - F) \subseteq N_G(X)$. By Theorem~\ref{Forest: Mader}, $H - N_G(X)$ is a forest. As $V(H) \setminus N_{G'}(X) \ne \emptyset$, this forest has at least one leaf. For such a leaf $u$, we have $d_G(u)=d_{G'}(u) \le 1 + |N_G(X)| \le 1 + \lfloor (\delta(G) + 1)/2 \rfloor < \delta(G)$ for $\delta(G) \ge 11$, yielding a contradiction.
\hfill$\qed$

\section{Function $f(k,\delta)$}\label{sec:f-k-delta}
In this section, we study the function $f(k,\delta)$.
By Theorem~\ref{thm:3k-1}, $f(k,\delta)\ge(\delta+1)/2$ for $\delta\ge 3k-3$.

Combined with $f(k,\delta) \le \delta$, the following theorem establishes that $f(1,\delta) = \delta$ for $\delta \ge 3$.

\begin{theorem}\label{thm:1-R-de}
For $\delta\ge 3$, every connected graph $G$ with order $n\ge 2\delta$ and $\delta(G)\ge \delta$ contains a $1$-removable $\delta$-matching.
\end{theorem}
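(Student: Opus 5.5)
We must find a matching $M$ of size $\delta$ with $G-M$ connected, assuming $G$ is connected, $n\ge 2\delta$ and $\delta(G)\ge\delta\ge 3$.

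The plan is to first build a spanning tree $T$ of $G$ and then choose the matching from $E(G)\setminus E(T)$. Removing such a matching leaves $T$ intact, so $G-M$ stays connected. The whole task therefore reduces to this: find a spanning tree $T$ whose complement $G-E(T)$ contains a $\delta$-matching.

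To choose $T$, we use Theorem~\ref{Dirac}. It gives a path $P=v_0v_1\cdots v_m$ with $m\ge\min\{2\delta,n-1\}\ge 2\delta-1$, so $P$ has at least $2\delta$ vertices. We take $P$ to be a longest path. Then every neighbour of $v_0$ lies on $P$, and so does every neighbour of $v_m$. Extend $P$ to a spanning tree $T$ by attaching the vertices outside $P$ by a BFS from $P$. Every edge of $G$ between two vertices of $P$ other than the edges $v_iv_{i+1}$ is then a non-tree edge.

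Next we harvest chords. The endpoint $v_0$ has at least $\delta-1$ chords $v_0v_j$ with $j\ge 2$, and $v_m$ has at least $\delta-1$ chords $v_mv_j$. These are all star edges, so they give only two matching edges; we need a better source.

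The better idea is a Pósa-type rotation. For each chord $v_0v_{j}$, the path $v_{j-1}v_{j-2}\cdots v_0v_jv_{j+1}\cdots v_m$ is another longest path. This lets us exchange which path edge is dropped: in the tree $T'$ we may delete the path edge $v_{j-1}v_j$ and keep the chord $v_0v_j$ instead.

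An alternative and cleaner route is to show directly that $G$ has a connected spanning subgraph $H$ such that $G-E(H)$ contains a $\delta$-matching. Equivalently, we want $\delta$ edges forming a matching whose removal destroys no connectivity, meaning each removed edge lies on a cycle in what remains.

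We plan to construct this greedily on the long path. For $i=1,\dots,\delta$, pick disjoint segments and use the $\delta-1$ back-edges of $v_0$ and $v_m$ together with the degree condition on interior vertices to close cycles.

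Concretely, we use induction on $\delta$, or we reduce to Theorem~\ref{thm:half-delta}-style arguments as follows. Choose a set $X$ of vertices, greedily remove them, and match each into the remaining part, as in the proof of Theorem~\ref{2-R-half-m}. For $k=1$, a vertex $x$ with at least two neighbours in the connected rest $G-X$ can lose one edge to $G-X$ and stay attached. So it suffices to find $X$ with the following properties:
\begin{itemize}
\item $G-X$ is connected;
\item every $x\in X$ has at least $2$ neighbours outside $X$;
\item $G[X,V(G)\setminus X]$ has a matching of size $\delta$.
\end{itemize}
Note that $X$ may be chosen to be much smaller than $\delta$: edges inside $G-X$ whose removal keeps $G-X$ connected (non-bridges) can also be used.

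The main obstacle is the Hall-deficiency situation. Here the neighbourhoods concentrate on a small set, as in the extremal graph $K_{\delta,n-\delta}$, and one must exploit $n\ge 2\delta$ to recover extra matching edges inside $G-X$. We would handle this as in Case~1 of Theorem~\ref{2-R-half-m}. Consider the block structure of $G-X$: a leaf block avoiding $N(X)$ has minimum degree at least $3$, so Lemma~\ref{1-R-2} supplies non-bridge edges there. Otherwise every vertex of $G-X$ is close to $N(X)$, and counting with $n\ge 2\delta$ gives a $\delta$-matching between the large side and the rest, in which each side keeps a second edge.

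We expect this counting, which ensures that every vertex retains a path after the removal, to be the delicate step.
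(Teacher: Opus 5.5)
Your proposal does not yet contain a proof. It sketches several strategies: a spanning tree plus chords, P\'osa rotations, greedy segments, and a set $X$ matched into $G-X$. You abandon the first three. For the last one, the decisive step (the ``counting'' you flag as delicate) is left unspecified, and it is where the content of the theorem lies.

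The $X$-reduction itself is sound, but its third condition forces $|X|\ge\delta$, and nothing you propose produces such an $X$. Deleting non-cut vertices one at a time, as in Theorem~\ref{2-R-half-m}, only guarantees each $x\in X$ at least $\delta-|X|+1$ neighbours outside $X$. For $|X|\ge\delta$ that bound is $1$, so neither your second condition nor Hall's condition is secured. Hall can be forced only for $|X|\le(\delta+1)/2$, which is exactly why Theorem~\ref{2-R-half-m} stops at $\lceil(\delta+1)/2\rceil$. When $n=2\delta$, your conditions even demand a perfect matching between $X$ and $V(G)\setminus X$ with $G-X$ connected.

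Your remark that $X$ may be smaller contradicts that condition. If you allow it anyway, roughly $\delta/2$ missing edges must come from inside $G-X$. Individually non-bridge edges do not suffice, since their union need not be removable (think of a cycle). Lemma~\ref{1-R-2} supplies only two edges. In Case~1 of Theorem~\ref{2-R-half-m} the Hall deficiency is exactly one, which is why that trick works there but cannot close a gap of order $\delta/2$ here.

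The missing idea is to apply the half-$\delta$ result twice, once on each side of a small edge cut.
\begin{itemize}
\item By Theorem~\ref{Dirac} and $n\ge 2\delta$, $G$ has a path on at least $2\delta$ vertices, hence a $\delta$-matching $M$.
\item If $G-M$ is disconnected, take a maximal $M'\subseteq M$ with $G-M'$ connected and some $e\in M\setminus M'$. Then $G-(M'\cup\{e\})$ has exactly two components $G_1,G_2$. Each is connected with $\delta(G_i)\ge\delta-1\ge 2$, because a matching removes at most one edge at each vertex.
\item The case $k=1$ of Theorem~\ref{2-R-half-m} gives a $1$-removable $\lceil\delta/2\rceil$-matching $M_i$ in each $G_i$. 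Then $|M_1\cup M_2|\ge\delta$, and $G-(M_1\cup M_2)$ is connected because the retained edge $e$ still joins $G_1-M_1$ to $G_2-M_2$.
\item The exceptional outcomes of Theorem~\ref{2-R-half-m} need only a short direct patch; the essential one is $G_i\cong K_\delta$ with $\delta$ odd. There every vertex of $G_i$ sends an edge of $M$ across the cut. When both sides are complete, one trades two crossing edges $x_1y_1,x_2y_2\in M$ (with $x_j\in V(G_1)$, $y_j\in V(G_2)$) for $x_1x_2$ and $y_1y_2$.
\end{itemize}
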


\begin{pf} By Theorem \ref{Dirac},  $G$ contains a path $P$ of length at least $\min\{2\delta(G), n-1\}$. Since $n\ge 2\delta$ and $\delta(G)\ge \delta$, the length of $P$ is at least $2\delta-1$; hence, $G$ contains a matching $M$ of size $\delta$. If $G-M$ is connected, then $M$ is our desired matching. Thus, we suppose that $G-M$ is disconnected. Let $M'$ be a maximal subset of $M$ such that $G-M'$ remains connected. Choose $e\in M\setminus M'$. By the maximality of $M'$, $G-(M'\cup \{e\})$ has exactly two components, denoted by $G_1$ and $G_2$. Since $G_i$ is connected and $\delta(G_i)\ge \delta(G)-1=\delta-1$, by Theorem \ref{2-R-half-m}, $G_i$ contains a $1$-removable matching $M_i$ with size $\lceil\delta/2\rceil$, unless $\delta-1=2$ and $G_i$ is a cycle, or $\delta-1\ge 4$ is even and $G_i$ is a complete graph of order $\delta$. If, for each $i\in\{1, 2\}$, $G_i$ is neither a cycle with $\delta-1=2$ nor a complete graph of order $\delta$ with even $\delta-1\ge 4$, then $M_1\cup M_2$ is the desired matching.

Suppose at least one of $G_1$ and $G_2$ is a cycle of order at least four. Without loss of generality, let $G_1$ be such a cycle. Then $\delta(G) = 3$ and $|E_G[V(G_1), V(G_2)]| \ge 4$, which contradicts the assumption that $M' \cup \{e\}$ is an edge cut of size at most 3 separating $G_1$ and $G_2$. Therefore, $\delta - 1 \ge 2$ is even, $G_i$ is a complete graph of order $\delta$, and $M' \cup \{e\} = M = E_G[V(G_1), V(G_2)]$ for $i \in \{1,2\}$. Choose $x_1y_1, x_2y_2 \in M$ with $x_i,y_i \in V(G_i)$ for $i=1,2$. Then $(M \setminus \{x_1y_1, x_2y_2\}) \cup \{x_1x_2, y_1y_2\}$ is the desired matching.
\end{pf}\hfill$\qed$

\begin{remark}
The order condition in Theorem \ref{thm:1-R-de} is optimal, since $K_{2\delta-1}$  has no $\delta$-matching. The minimum degree condition in Theorem \ref{thm:1-R-de} is also optimal, since  $K_{\delta-1,\delta+1}$ has no $\delta$-matching.
\end{remark}

With Theorem~\ref{thm:1-R-de} established, we are now prepared to prove Theorem~\ref{thm:half-n-min}.

\noindent{\bf The Proof of Theorem~\ref{thm:half-n-min}.}
If $n\ge 2\delta(G)$, then the desired result follows from Theorem \ref{thm:1-R-de}.
Suppose that $n<2\delta(G)$. By Dirac's theorem, $G$ contains a Hamiltonian cycle, and hence there exists a matching $M$ of size $\lfloor n/2\rfloor$ in $G$.
If $G-M$ is connected, then we are done. Thus, we may assume that $G-M$ is disconnected, and let $G_1$ and $G_2$ denote the two components of $G-M$. For $i=1, 2$, since $|V(G_i)|-1\ge\delta(G_i)\ge \delta(G)-1$, we have $|V(G_i)|\ge \delta(G)$.
Thus, $n\ge|V(G_1)|+|V(G_2)|\ge 2\delta(G)$, a contradiction.\hfill$\qed$

\begin{remark}
The size of $1$-removable matching in Theorem \ref{thm:half-n-min} is optimal, since $K_{\delta,\delta}$ has no $(\delta+1)$-matching.
\end{remark}

To study $f(2,\delta)$, we need the following lemma.

\begin{lemma}\label{separating set}
For $m > 2$, let $G$ be a 2-connected graph containing a unique vertex $x$ such that $d_G(x) < m$ and $d_G(z)\ge m$ for $z\in V(G)\setminus\{x\}$. Then there exists a neighbor $y$ of $x$ such that $\{x, y\}$ is not a separating set of $G$.
\end{lemma}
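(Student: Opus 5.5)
We argue by contradiction, assuming that for every neighbor $y$ of $x$ the set $\{x,y\}$ separates $G$, and we use an extremal choice of the separated pieces. For each $y\in N_G(x)$ the graph $G-\{x,y\}$ is disconnected. Among all pairs $(y,D)$ with $y\in N_G(x)$ and $D$ a component of $G-\{x,y\}$, we choose one with $|V(D)|$ minimum.

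Since $G$ is $2$-connected, both $x$ and $y$ have neighbors in every component of $G-\{x,y\}$. In particular, $x$ has a neighbor $y'\in V(D)$.

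We next show that $\{x,y'\}$ separates $G$ with a component strictly inside $D$. By hypothesis $G-\{x,y'\}$ is disconnected. The set $(V(G)\setminus V(D))\setminus\{x\}$ consists of $y$ together with the other components of $G-\{x,y\}$. It induces a connected subgraph, because $y$ is adjacent to each of those components, and this subgraph avoids $y'$. So it lies inside a single component of $G-\{x,y'\}$. Since $G-\{x,y'\}$ is disconnected, some other component $D'$ exists. It must satisfy $V(D')\subseteq V(D)\setminus\{y'\}$, so $|V(D')|<|V(D)|$. This contradicts the minimality of $(y,D)$.

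The argument above never uses the degree hypothesis, so something is off. Checking the base case: if $D$ consists of a single vertex $y'$, then $V(D)\setminus\{y'\}$ is empty, so $D'$ cannot exist. Hence $\{x,y'\}$ does not separate $G$, and $y'$ is already a witness; no contradiction is needed. So the argument seems complete on its own. The one point to verify carefully is that every vertex of $(V(G)\setminus V(D))\setminus\{x\}$ is joined to $y$ without passing through $x$ or $y'$. This holds because each other component of $G-\{x,y\}$ is connected and contains a neighbor of $y$. This step is the main thing to confirm.

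If the statement intends ``separating set'' to require a $2$-separating set in the stronger sense used elsewhere in the paper, then the hypothesis $d_G(z)\ge m>2$ for $z\ne x$ would be used to rule out degenerate small components. In that case the fallback is the same minimal-component argument, with a minimal end-block style choice, to ensure $D$ has at least two vertices. The same nesting contradiction then finishes the proof.
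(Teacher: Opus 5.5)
Your argument is correct and is essentially the paper's proof: choose $y\in N_G(x)$ and a smallest component $D$ of $G-\{x,y\}$, then take a neighbor $y'$ of $x$ in $D$. Since $y$ together with the other components stays connected in $G-\{x,y'\}$, any further component must lie strictly inside $D$, contradicting minimality. You are also right that the degree hypothesis is not needed (the paper uses it only to note $|V(G_1)|\ge 2$), and your final hedging paragraph is unnecessary, because the paper defines a separating set simply as one whose removal disconnects $G$.
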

\begin{pf}
For contradiction, suppose that $G-\{x,y\}$ is disconnected for every $y\in N_G(x)$. Choose $y\in N_G(x)$ such that $G-\{x,y\}$ has a smallest component, say $G_1$. $G_1$ contains at least two vertices because $d_G(x_1)\ge m>2$.
Let $G_2,\dots, G_t$ be the other components of $G-\{x,y\}$ besides $G_1$. Since $G$ is $2$-connected, $N_G(x)\cap V(G_i)\ne \emptyset$ and $N_G(y)\cap V(G_i)\ne \emptyset$ for $i\in [1,t]$.

For $x_1\in N_G(x)\cap V(G_1)$, if $G-\{x,x_1\}$ is connected, then we are done. Suppose that $G-\{x,x_1\}$ is not connected.
Since $N_G(y)\cap V(G_i)\ne \emptyset$ for every $i\in \{1,\dots,t\}$, $G-\{x,x_1\}$ has a component $G'_1$ such that $G'_1\subseteq G_1$ and $|V(G'_1)|<|V(G_1)|$, a contradiction.
\end{pf}\hfill$\qed$

For a subgraph $H$ of $G$, define $G/H$ as the graph formed from $G-V(H)$ by adding a new vertex $v$ and edges $vu$ for every $u\in N_G(V(H))$.

\begin{theorem}\label{2Connected(m-3)matching}
For $\delta\ge 5$, every $2$-connected graph $G$ with $|V(G)|\ge 2(\delta-2)$ and $\delta(G)\ge \delta$ contains a $2$-removable $(\delta-3)$-matching. In particular, if $\delta$ is even, then $G$ contains a $2$-removable $(\delta-2)$-matching.
\end{theorem}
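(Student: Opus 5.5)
We mimic the proof of Theorem~\ref{thm:1-R-de}: first find a large matching, then repair it using removable matchings in the pieces. By Theorem~\ref{Dirac}, $G$ has a path of length at least $\min\{2\delta(G),n-1\}\ge 2(\delta-2)-1$, so $G$ has a matching $M$ of size $\delta-2$. If $G-M$ is $2$-connected we are done. In particular, for even $\delta$ this already gives the stronger claim whenever it succeeds.

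Otherwise, let $M'\subseteq M$ be maximal with $G-M'$ $2$-connected, and pick $e=ab\in M\setminus M'$. Then $G-M'-e$ is connected but not $2$-connected, since removing an edge from a $2$-connected graph leaves a connected graph. It therefore has a cut vertex $w$ separating $a$ from $b$. The structure is that $G-M'-e$ consists of blocks $B_1,\dots,B_s$ arranged in a path, with $a\in B_1$ and $b\in B_s$, and adding $e$ restores $2$-connectivity.

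The end blocks $B_1$ and $B_s$ each contain at least one vertex of small degree: the attachment cut vertex, which may have low degree inside the block. Every other vertex has degree at least $\delta-|M'|-2$, which is too weak. So instead of using $M'$ we use a contraction argument via $G/H$ and Lemma~\ref{separating set}.

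Concretely, take a minimal $2$-separation $\{u,w\}$ of $G-M$ and a smallest side $D$. Every vertex of $D$ loses at most one edge to $M$, so $\delta(G[D\cup\{u,w\}])\ge\delta-1-(\text{edges to } \{u,w\})$ away from $u,w$. Contract the complement to a single vertex, forming $G/H$. This yields a $2$-connected graph with a unique low-degree vertex $x$ and all other degrees at least $\delta-1>2$.

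Lemma~\ref{separating set} then provides an edge $xy$ such that $\{x,y\}$ does not separate. We then apply induction on $\delta$, or Theorem~\ref{2-R-half-m}, inside the piece to find a $2$-removable matching there. This is combined with the part of $M$ on the other side, while discarding at most one edge of $M$ lying across the separation and at most one more used for repair. The loss of up to $3$ edges relative to $\delta$ accounts for the bound $\delta-3$.

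For even $\delta$, the parity improvement in Theorem~\ref{2-R-half-m}, namely $\lceil(\delta'+1)/2\rceil$ with $\delta'$ odd rounding up, saves one edge and gives $\delta-2$.

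The main obstacle is the bookkeeping in the repair step. We must guarantee that the removable matching found inside the smaller side is vertex-disjoint from the retained edges of $M$, and that gluing along the $2$-separator keeps $2$-connectivity. This is where Lemma~\ref{separating set} is needed to choose the edge at the separator vertex whose deletion does not create a new $2$-cut. We would first try to force the smaller side to have order at least $\delta-1$, using the degree bound. Then, as in the $K_\delta$ exceptional case of Theorem~\ref{thm:1-R-de}, we would handle the nearly complete sides by an explicit swap of two matching edges.
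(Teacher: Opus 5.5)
There is a genuine gap in the combining step, and the bound $\delta-3$ is never actually derived.

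After contracting the complement of one side, Theorem~\ref{2-R-half-m} gives a $2$-removable matching of only about $\delta/2$ edges in that piece. You then propose to add ``the part of $M$ on the other side''. Nothing makes those edges removable: $M$ is an arbitrary matching read off a long path, and you are exactly in the case where $G-M$ is not $2$-connected. Nor do you control how many edges of $M$ lie on that side. So the ``loss of up to $3$ edges'' has no source, and a single application of Theorem~\ref{2-R-half-m} cannot supply the parity gain to $\delta-2$. The final paragraph is a plan rather than an argument.

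The setup is also off in two places. Failure of $2$-connectivity in $G-M$ is witnessed by a cut vertex, and $G-M$ may even be disconnected, so a ``minimal $2$-separation of $G-M$'' is the wrong object. Your estimate $\delta-|M'|-2$ for degrees in the end blocks is wrong: a vertex loses at most one edge to a matching, so non-cut vertices of an end block keep degree at least $\delta-1$ there. That correct estimate is exactly what makes the decomposition you abandoned work.

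The missing idea is to extract a removable half-matching on \emph{both} sides and then discard $M$ altogether. This is what the paper does.
\begin{itemize}
\item Take $M'\subseteq M$ minimal with $G-M'$ not $2$-connected, and a cut vertex $x$ of $G-M'$. Minimality forces $G-M'-x$ to have exactly two components $G_1,G_2$ with $M'=E_G[G_1,G_2]$.
\item The contractions $G_1'=G/(G_2\cup\{x\})$ and $G_2'=G/(G_1\cup\{x\})$ are $2$-connected. Because $M'$ is a matching, every vertex other than the new vertex $x_i$ has degree at least $\delta-1$.
\item If $d(x_i)\ge\delta-1$, apply Theorem~\ref{2-R-half-m} to $G_i'$ directly. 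Otherwise Lemma~\ref{separating set} gives $y_i\in N(x_i)$ with $G_i'-\{x_i,y_i\}$ connected. Contracting $x_iy_i$ then preserves $2$-connectivity with minimum degree at least $\delta-2$, and Theorem~\ref{2-R-half-m} applies to that graph. This, not controlling new $2$-cuts when gluing, is the role of the lemma.
\item Discarding the (at most one) matching edge at the contracted vertex gives a $2$-removable $M_i$ with $|M_i|\ge\lceil(\delta-1)/2\rceil-1$ that avoids it.
\item $M_1\cup M_2$ is $2$-removable in $G$: a cut vertex $u\in V(G_1)$ of $G-M_1-M_2$ would be a cut vertex of $G_1'-M_1$.
\end{itemize}
Since $2(\lceil(\delta-1)/2\rceil-1)$ equals $\delta-2$ for even $\delta$ and $\delta-3$ for odd $\delta$, this is where both the bound and the parity improvement come from.
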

\begin{pf}
For $\delta\ge 5$, assume for contradiction that, for any matching $M$ of size $\delta-3$ (or $\delta-2$ if $\delta$ is even), $G-M$ is not $2$-connected.

By Theorem \ref{Dirac}, $G$ has a matching $M$ of size $\delta-2$. Let $M'\subseteq M$ be a minimal matching such that $G-M'$ is not $2$-connected. Let $x$ be a cut vertex of $G-M'$, so $G-M'-x$ has exactly two components, $G_1$ and $G_2$, and $M'=E_G[G_1,G_2]$.

Define $G'_1=G/(G_2\cup\{x\})$ and $G'_2=G/(G_1\cup\{x\})$, where $x_i$ denotes the new vertex in $G'_i$ for each $i\in\{1,2\}$.
One can check that for each $v\in V(G'_i)\setminus\{x_i\}$, $d_{G'_i}(v)\ge \delta-1$.

We assert that each $G'_i$ is $2$-connected for $i = 1, 2$. Assume to the contrary that some vertex $u$ is a cut vertex of $G'_i$. Because $G - M' - x$ has exactly two components, $x_i$ cannot coincide with $u$. Let $H_i$ be a component of $G'_i - u$ that does not contain $x_i$. Then the set $N_{G'_i}(x_i)\setminus\{u\}$ lies entirely in an another component of $G'_i - u$. By $N_{G'_i}(x_i)\setminus\{u\}=N_G(V(G_j)\cup\{x\})\setminus\{u\}$, it follows that $u$ is also a cut vertex of $G$, a contradiction. Hence, each $G'_i$ is $2$-connected for $i = 1, 2$.

Consider the following two cases for each $i\in\{1,2\}$:

\noindent{\it~Case 1.} $d_{G'_i}(x_i)\ge \delta-1$.

By Theorem \ref{2-R-half-m}, $G'_i$ has a $2$-removable $\lceil \delta/2 \rceil$-matching  unless $G'_i \cong K_{\delta-1}$ and $\delta-1$ is even. Thus $G'_i$ has a $2$-removable matching $M_i$ of size $\lceil \delta/2 \rceil - 1$ avoiding $x_i$.

\noindent{\it~Case 2.} $d_{G'_i}(x_i)< \delta-1$.

By Lemma \ref{separating set}, there exists $y_i \in N_{G'_i}(x_i)$ such that $G'_i-\{x_i, y_i\}$ is connected. Thus, $G'_i/\{x_i, y_i\}$ is $2$-connected with minimum degree at least $\delta-2$. Since $\delta(G'_i/x_iy_i)\ge \delta-2$, as in Case 1, $G'_i/\{x_i, y_i\}$ has a $2$-removable matching $M_i$ of size $\lceil \frac{\delta-1}{2} \rceil-1$ avoiding the new added vertex $z_i$ in $G'_i/\{x_i,y_i\}$.

Combining the above two cases, if $G-(M_1 \cup M_2)$ is $2$-connected, then $M_1 \cup M_2$ is our desired matching, contradicting our assumption that $G-M$ is not $2$-connected for any such matching $M$.

It suffices to show that $G-(M_1 \cup M_2)$ is $2$-connected.  Suppose for contradiction that $u$ is a cut vertex in $G-(M_1 \cup M_2)$.

From the previous two cases, we know that $G'_i - M_i$ is still $2$-connected for each $i \in \{1,2\}$, and hence $G'_i - M_i - x_i$ is connected for $i = 1,2$. Because $M' = E_G[G_1,G_2] \neq \emptyset$, vertex $x$ is not a cut vertex of $G - M_1 - M_2$, which implies $u \neq x$. Without loss of generality, assume $u \in V(G_1)$. Let $H$ be a component of $G - (M_1 \cup M_2) - u$ that does not contain $x$. Then the vertex set $V(G_2)\cup\{x\}$, together with its neighborhood, lies in an another component of $G - (M_1 \cup M_2) - u$. Since $N_{G'_1-M_1}(x_1)\setminus\{u\} = N_G(V(G_2)\cup\{x\})$, it follows that $u$ is also a cut vertex of $G'_1 - M_1$. This contradicts the $2$-connectedness of $G'_1 - M_1$. Therefore, $G - M_1 - M_2$ is $2$-connected.

Combining the above two cases, since $G'_i-M_i$ remains $2$-connected for $i\in \{1,2\}$, $G-(M_1 \cup M_2)$ is $2$-connected.
Hence $G$ contains a $2$-removable $(\delta-3)$-matching (or $(\delta-2)$-matching if $\delta$ is even), contradicting our assumption that $G-M$ is not $2$-connected for any such matching $M$.
\end{pf}\hfill$\qed$

\begin{remark}
By Theorem \ref{2Connected(m-3)matching}, $f(2,\delta)\ge\begin{cases}
\delta-2, & \delta\ \text{is even};\\
\delta-3, & \delta\ \text{is odd}.
 \end{cases}$.
\end{remark}


\begin{remark}
Let $n, k$ be two integers. The join of an independent set of size $k-1$ and a cycle of order $n-k+1$ shows that $f(k,k+1)\le k$.
In particular, by Lemma \ref{2-R-2}, $f(2,3)=2$.
\end{remark}

Combining Theorem~\ref{thm:two-matching}, Theorem~\ref{thm:half-delta}, and the above results in this section, we have that Theorem \ref{thm:f-k-delta} holds.

\section{Conclusion}\label{sec:conclusion}

In this paper, we improve Halin's edge removability to matching removability in $k$-connected graphs with a minimum degree of at least $k+1$.

Based on our findings, we propose the following conjectures and an open problem, which we believe are worth exploring in future research:

\begin{conjecture}\label{con:matching}
For $k \ge 1$, every $k$-connected graph $G$ with $\delta(G) \ge k+1$ contains a $k$-removable $\lceil (\delta(G)+1)/2 \rceil$-matching, unless $\delta(G)$ is even and $G\cong K_{\delta(G)+1}$.
\end{conjecture}

Our Theorem~\ref{thm:half-delta} supports this conjecture for cases where $k\le 3$, with the notable exception of $k=3$ and $\delta(G)=4$, which suggests that further exploration may be required to fully understand this particular case.

For $f(k, \delta)$, we have the following problem.

\begin{problem}\label{pro:matching}
For positive integers $\delta > k \ge 1$, determine the exact value of $f(k, \delta)$.
\end{problem}

Theorem~\ref{thm:f-k-delta} gives exact values for $f(1,\delta)$ and $f(2,3)$. Determining $f(k,\delta)$ for larger $k$ and $\delta$ remains a compelling challenge and may reveal new structural properties of $k$-connected graphs.

We conclude with the following conjecture.

\begin{conjecture}\label{con:half-n-min}
Let $G$ be a $k$-connected graph on $n$ vertices with $\delta(G)\ge k+2$. Then $G$ contains a $k$-removable $\min\{\lfloor n/2\rfloor,\, \delta(G)\}$-matching.
\end{conjecture}

Theorem~\ref{thm:half-n-min} supports the conjecture for $k=1$. If Conjecture~\ref{con:half-n-min} holds, then $f(k,\delta)=\delta$ for $\delta\ge k+2$.

\end{document}